\documentclass[numbers,webpdf,imanum]{ima-authoring-template}

\usepackage[T1]{fontenc}
\usepackage{booktabs}
\usepackage[mathscr]{euscript}
\usepackage{color,amsmath,amssymb,mathrsfs}
\usepackage{graphicx}
\usepackage{mathtools}
\usepackage{amsfonts}
\usepackage{xspace}
\usepackage{float}

\makeatletter
\renewcommand*\l@subsubsection[2]{{}{}{}} %
\makeatother
\usepackage{subcaption}

\DeclareMathAlphabet{\mathsf}{OT1}{cmss}{m}{n}
\DeclareSymbolFont{operators}{OT1}{cmr}{m}{n}
\DeclareSymbolFont{letters}{OML}{cmm}{m}{it}
\DeclareSymbolFont{symbols}{OMS}{cmsy}{m}{n}
\DeclareSymbolFont{largesymbols}{OMX}{cmex}{m}{n}
\SetSymbolFont{operators}{bold}{OT1}{cmr}{bx}{n}
\SetSymbolFont{letters}{bold}{OML}{cmm}{b}{it}
\SetSymbolFont{symbols}{bold}{OMS}{cmsy}{b}{n}

\usepackage{enumitem}
\usepackage{mleftright}
\renewcommand{\left}{\mleft}
\renewcommand{\right}{\mright}

\newif\ifusecref
\usecreffalse %

\ifusecref
  \usepackage[nameinlink]{cleveref}
  \Crefname{equation}{}{}
  \Crefname{lemma}{Lemma}{Lemmas}
  \Crefname{remark}{Remark}{Remarks}
  \Crefname{cor}{Corollary}{Corollaries}
  \Crefname{prop}{Proposition}{Propositions}
  
\else
  \usepackage{zref-clever}
  \usepackage{zref-xr}
  \zcsetup{cap}
  \AddToHook{env/lemma/begin}{\zcsetup{countertype={theorem=lemma}}}
  \AddToHook{env/remark/begin}{\zcsetup{countertype={theorem=remark}}}
  \AddToHook{env/proposition/begin}{\zcsetup{countertype={theorem=proposition}}}
  \zcRefTypeSetup{equation}{Name-sg=,}
  \zcRefTypeSetup{setting}{Name-sg=Setting,}
  \let\Cref\zcref
\fi

\renewcommand\figurename{Figure}

\setlist[itemize]{labelsep=.6em, leftmargin=1.2em} 
\setlist[enumerate]{labelsep=.6em, leftmargin=*} 

\graphicspath{{./figs/}}
\ifpdf
  \DeclareGraphicsExtensions{.eps,.pdf,.png,.jpg}
\else
  \DeclareGraphicsExtensions{.eps}
\fi

\newif\ifshowcomments
\showcommentstrue

\newcommand{\commentout}[1]{}

\definecolor{grassgreen}{RGB}{92,135,39}

\DeclareMathAlphabet{\mathpzc}{OT1}{pzc}{m}{it}
\DeclareMathAlphabet{\mathup}{OT1}{\familydefault}{m}{n}

\newcommand{\alg}[1]{\textrm{\textsc{#1}}}

\newcommand{\mc}[1]{\mathcal{#1}}
\newcommand{\bs}[1]{\boldsymbol{#1}}
\newcommand{\mat}[1]{\mathbf{{#1}}}
\renewcommand{\vec}[1]{\bs{#1}}

\newcommand\restr[2]{{
  \left.\kern-\nulldelimiterspace %
  {#1}\vphantom{\big|} \right|_{#2}}}

\newcommand{\R}{\mathbb{R}}
\newcommand{\Z}{\mathbb{Z}}

\newcommand{\Exp}{\mathbb{E}}

\newcommand{\GM}[1]{\mathcal{N}\!\left( #1 \right)}

\DeclareMathOperator{\tr}{tr}
\DeclareMathOperator{\Var}{Var}
\DeclareMathOperator{\MSE}{MSE}

\counterwithin{algorithm}{section}
\DeclareFontShape{T1}{ptm}{m}{scsl}{<-> ssub * ptm/m/sc}{}
\DeclareFontShape{T1}{ptm}{m}{scit}{<-> ssub * ptm/m/sc}{}

\newcommand{\algname}{\alg{FlexTrace}\xspace}
\newcommand{\ialgname}{\textrm{i-}\alg{FlexTrace}\xspace}

\newcommand{\A}{\mat{A}}
\newcommand{\B}{\mat{B}}
\newcommand{\C}{\mat{C}}
\newcommand{\D}{\mat{D}}
\newcommand{\F}{\mat{F}}
\newcommand{\G}{\mat{G}}
\newcommand{\Hm}{\mat{H}}
\newcommand{\I}{\mat{I}}
\newcommand{\K}{\mat{K}}

\newcommand{\Q}{\mat{Q}}
\newcommand{\Rm}{\mat{R}}
\newcommand{\Sm}{\mat{S}}
\newcommand{\U}{\mat{U}}
\newcommand{\V}{\mat{V}}
\newcommand{\X}{\mat{X}}
\newcommand{\Xh}{\widehat{\X}}
\newcommand{\Y}{\mat{Y}}
\renewcommand{\Z}{\mat{Z}}

\newcommand{\Lm}{\bs{\Lambda}}
\newcommand{\Om}{\bs{\Omega}}
\newcommand{\Gmb}{\bs{\Gamma}}
\newcommand{\Sgm}{\bs{\Sigma}}
\newcommand{\om}{\vec{\omega}}
\newcommand{\Ah}{\widehat{\A}}

\newcommand{\Uh}{\widehat{\U}}
\newcommand{\Ut}{\widetilde{\U}}
\newcommand{\Lmh}{\widehat{\Lm}}
\newcommand{\Hmt}{\widetilde{\Hm}}

\DeclareMathSymbol{\mr}{\mathord}{AMSa}{"39}
\newcommand{\Nystrom}{Nystr\"om\xspace}
\newcommand{\pp}{\nolinebreak[4]\raisebox{.3ex}{\scriptsize\sf ++}}
\newcommand{\Nystrompp}{\alg{Nystr\"om\pp}\xspace}
\newcommand{\funNys}{\alg{FunNys}\xspace}

\newcommand{\funNystrompp}{\alg{funNystr\"om\pp}\xspace}
\newcommand{\Anys}{\Ah_{\mathrm{nys}}}

\newcommand{\Omi}{\Om_{\mr i}}
\newcommand{\Yi}{\Y_{\mr i}}
\newcommand{\Ahi}{\Ah_{\mr i}}

\newcommand{\AhOne}{\Ah_{\mr 1}}
\newcommand{\AhOneTwo}{\Ah_{\mr \{1, 2\}}}

\newcommand{\trh}{\widehat{\tr}}

\newcommand{\uinvnorm}[1]{\vert\kern-0.15ex\vert\kern-0.15ex\vert #1\vert\kern-0.15ex\vert\kern-0.15ex\vert}

\DeclareMathOperator*{\Cov}{Cov}

\DeclareMathOperator*{\diag}{diag}
\DeclareMathOperator*{\rank}{rank}

\newcommand{\mleq}{\preceq}

\newcommand{\cN}{\mathcal{N}}

\newtheorem{setting}{Setting}[section]

\newcommand{\Range}{\mathcal{R}\!\mathit{ange}}

\newcommand{\prcov}{\Gmb_{\text{pr}}}
\newcommand{\noisecov}{\Gmb_{\text{noise}}}
\newcommand{\postcov}{\Gmb_{\text{post}}}

\DeclareFontShapeChangeRule{it}{sc}{scsl}{scsl}
\newcommand{\nysprop}[1]{\hyperref[#1]{Lemma~\ref*{lem:nysprops}\ref*{#1}}}

\newenvironment{thmproof}[1]{\textit{Proof of \Cref{#1}.}}{\hfill\qed}
\newcommand{\AnysR}{\ensuremath{[\![\Anys]\!]_r}}
\newcommand{\Qh}{\widehat{Q}}

\graphicspath{{figs/}}

\theoremstyle{thmstyletwo}%
\newtheorem{theorem}{Theorem}[section]
\newtheorem{lemma}[theorem]{Lemma}
\newtheorem{proposition}[theorem]{Proposition}

\newtheorem{definition}{Definition}[section]

\numberwithin{equation}{section}

\begin{document}

\copyrightyear{2026}
\vol{00}
\pubyear{2026}
\access{Advance Access Publication Date: Day Month Year}
\appnotes{Paper}
\firstpage{1}

\title[FlexTrace: Exchangeable Randomized Trace Estimation]{FlexTrace: Exchangeable Randomized Trace Estimation for Matrix Functions}

\author{Madhusudan Madhavan*\ORCID{0009-0007-3135-2913}, Alen Alexanderian \ORCID{0000-0002-6371-6618}, and Arvind K.\ Saibaba\ORCID{0000-0002-8698-6100}
\address{\orgdiv{Department of Mathematics}, \orgname{North Carolina State University}} %
}

\authormark{M. Madhavan, A. Alexanderian, and A. K. Saibaba}

\corresp[*]{Corresponding author: \href{email:mmadhav@ncsu.edu}{mmadhav@ncsu.edu}}

\abstract{We consider the task of estimating the trace of a matrix function, $\tr(f(\A))$, of a large symmetric positive semi-definite matrix $\A$. This problem arises in multiple applications, including kernel methods and inverse problems. A key challenge across existing trace estimation methods is the need for matrix-vector products (matvecs) with $f(\A)$, which can be very expensive. In this article, we introduce a novel trace estimator, \algname, an exchangeable, single-pass method that estimates $\tr(f(\A))$ solely using matvecs with $\A$. We consider the case where $f$ is an operator monotone matrix function with $f(0)=0$, which includes functions such as $\log(1+x)$ and $x^{1/2}$, and derive probabilistic bounds showcasing the theoretical advantages of \algname. Numerical experiments across synthetic examples and application domains demonstrate that \algname provides substantially more accurate estimates of the trace of $f(\A)$ compared to existing methods.}
\keywords{Randomized trace estimation; matrix functions; randomized \Nystrom approximation; variance reduction; exchangeability.}

\maketitle

\section{Introduction}
\label{sec:intro}
Matrix functions are indispensable components of numerous scientific and engineering applications, including network analysis, partial differential equations (PDEs), and statistical learning \cite{EstradaNetwork2010,BenziMatrix2020,HighamFunctions2008}. A common task in these applications is the estimation of the trace of matrix functions \cite{UbaruSLQ2017,UbaruApplications2018}. For a symmetric positive-definite matrix $\A \in \mathbb{R}^{n \times n}$ and a function $f: [0, \infty) \to [0, \infty)$, the trace of the matrix function can be expressed in terms of the eigenvalues of $\A$, $\{\lambda_i\}_{i=1}^n$, as $\tr(f(\A)) = \sum_{i=1}^{n} f(\lambda_i)$. A fundamental challenge, however, is that the eigenvalues of $\A$ can be prohibitively expensive to obtain in large-scale settings, making the explicit computation of $\tr(f(\A))$ infeasible. Moreover, in many applications—such as PDE-constrained optimization or kernel methods—the matrix $\A$ itself is not explicitly available but only accessible through matvecs $\mathbf{x} \mapsto \A\mathbf{x}$, which poses a further challenge for the computation of the matrix function.

In this article, we introduce \algname, a single-pass method that efficiently estimates $\tr(f(\A))$ directly from matvecs with $\A$. We focus on the class of operator monotone matrix functions (see \Cref{ssec:notation}) that satisfy $f(0) = 0$. Operator monotone functions preserve spectral ordering of matrices and facilitate the derivation of theoretical bounds on $f(\A)$ using the spectral properties of $\A$. The estimation of $\tr(f(\A))$ for such functions is applicable to a variety of important applications. For example, the estimation of log-determinants is essential for applications including Gaussian processes \cite{DongScalable2017}, Bayesian inference \cite{AlexanderianEfficient2018}, and spatial statistics \cite{PaceSampling2009,avronfasterkernel}. Similarly, the matrix Schatten $p$-norm, defined for a rectangular matrix $\X$ as $\|\X\|_{(p)} := \tr((\X^\top \X)^{p/2})^{1/p}$, involves the estimation of the trace of a matrix function and is important to matrix completion among other low-rank recovery tasks \cite{LiSketching2013,NguyenLowRank2019,PanishMatrix2025}. Specifically, our method is applicable to the estimation of the Schatten $p$-norms for $1 \leq p \leq 2$ and perhaps the most important case corresponds to the nuclear norm ($p = 1$). We provide theoretical guarantees for $\algname$ for the important class of operator monotone functions (see \Cref{tab:apps}), and demonstrate its substantial accuracy and efficiency improvements over existing methods across different applications.

\subsection*{Related Work} 
Matrix functions have been well-studied in literature, with several works addressing theoretical and computational aspects \cite{HornMatrix2017,HighamFunctions2008}. The estimation of the trace of a matrix function can be considered a special case of implicit trace estimation (see \Cref{ssec:trace_estimation}). In the last decade, there has been increased focus to the use of randomized low-rank approximation techniques for trace estimation \cite{Saibaba2017RandomizedMT,Meyer2020HutchOS,Persson2021ImprovedVO}. These approaches utilize matvecs with random vectors to construct low-rank approximations of matrices \cite{GittensMahoney2013Nystrom,HalkoMartinssonTropp2011Survey,nakatsukasa_fast_2020}. However, such techniques often require matvecs with $f(\A)$, which is a nontrivial task. This is often addressed with polynomial approximations of $f$ or Krylov-subspace methods. There have been several important recent works in the latter direction, including Stochastic Lanczos Quadrature (SLQ) \cite{BaiLargescale1996,UbaruSLQ2017} and variants \cite{FuentesEstimating2023,ChenHallman23,YeonBOLT2025}.

In such approaches, Lanczos or block Lanczos methods \cite{HighamFunctions2008} are commonly used to approximate applications of $f(\A)$ to random vectors, which are then used to estimate the trace. However, such methods typically require multiple passes over the matrix $\A$, e.g., computations of the form $\A^2\bs{x} = \A(\A \bs{x})$. This poses significant restrictions on the applicability of these methods to problems that depend on offline computations with $\A$. For example, in resource-intensive applications, matvecs with $\A$ may require days/weeks of computation and cannot be accessed \textit{at will} within the trace estimation process. Moreover, the estimation of matvecs with $f(\A)$ is commonly done independently of the stochastic trace estimation, increasing the total number of matvecs required for estimation.

Our approach is closely related to and builds on the fun\Nystrom method \cite{PerssonKressner23}, which utilizes a randomized \Nystrom approximation of $\A$ to estimate $f(\A)$. However, we primarily focus on the task of estimating the trace of $f(\A)$, as opposed to $f(\A)$ itself, in a single-pass manner. The \funNystrompp method from \cite{PerssonKressner23} as well as the recent works \cite{ChenHallman23,YeonBOLT2025} outline techniques for matvec-efficient estimation of $\tr(f(\A))$, however, they require multiple passes over the matrix for proper utilization. The importance and role of exchangeablity, that is, invariance to permutation of random vectors, in the context of trace estimation was highlighted in \cite{Epperly2023XTraceMT}, which suggested a leave-one-out exchangeable trace estimation technique called \alg{XNysTrace} which we discuss in \Cref{ssec:impl_nys}. Our approach, \algname, builds on this idea to develop an exchangeable approach for the estimation of traces of matrix functions. Finally, we mention that our work also shares some similarities with the single-sample probing approach in \cite{CortinovisPreconditioned2026}, although we focus on developing single-pass trace estimators for operator monotone matrix functions.

\subsection*{Our Approach and Contributions}
In this article, we develop a novel estimator, \algname  (Function-agnostic Low-rank EXchangeable Trace estimation algorithm), that utilizes a low-rank randomized \Nystrom approximation and a Monte Carlo estimator of the remainder to approximate $\tr(f(\A))$. This approach avoids the need to compute or approximate matvecs with $f(\A)$ altogether, making it suitable for applications where matvecs with $\A$ is only accessible \textit{offline} (e.g., before the trace estimation process). Moreover, it is function-agnostic in that it enables the computation of $\tr(f(\A))$ for multiple functions without additional matvecs with $\A$. In addition to our proposed estimator \algname, we present \ialgname---an idealized variant of \algname that facilitates deeper theoretical analysis. The key contributions of this article include:
\begin{itemize}
    \item an exchangeable single-pass method, \algname, to compute the trace of a matrix function;
    \item theoretical guarantees for the expectation and mean-squared-error of the proposed algorithms and error bounds for the fun\Nystrom approximation \cite{PerssonKressner23} under any unitarily invariant norm;
    \item computationally efficient implementations of \algname with a focus on numerical stability; and
    \item numerical demonstration of the proposed algorithm for synthetic examples, Bayesian inference, matrix completion, and kernel methods.
\end{itemize}
This article is organized as follows. In \Cref{sec:background}, we define notation and provide the necessary background to set the stage for the proposed methods. In \Cref{sec:main}, we present the proposed algorithm, \algname, and its idealized variant \ialgname, highlight their theoretical properties, and provide an efficient and stable implementation of \algname. We provide numerical results in \Cref{sec:num_res}, highlighting the effectiveness of the proposed algorithms for synthetic matrices as well as domain applications such as Bayesian inversion and kernel methods. We provide the proofs of the key theoretical results in \Cref{sec:proofs}. Finally, we present concluding remarks in \Cref{sec:concl}.

\section{Background}
\label{sec:background}
In this section, we discuss the necessary background for randomized trace estimation of matrix functions.

\subsection{Key Assumptions and Notation}
\label{ssec:notation}
For the subsequent results and theorems, we make the following ground assumptions.

\paragraph*{Matrix and Eigenvalues} Throughout the article, $\A$ is assumed to be an $n \times n$ real SPSD matrix, with eigenvalues arranged in non-increasing order as $\lambda_1 \geq \lambda_2 \geq \cdots \geq \lambda_n \geq 0$. The spectral decomposition of $\A$ is represented as $\A = \U \Lm \U^T$, where $\Lm = \diag(\lambda_1, \dots, \lambda_n)$ and $\U \in \R^{n\times n}$ is an orthogonal matrix. We define the submatrices $\Lm_{i:j} := \diag(\lambda_i, \dots, \lambda_j)$ for indices $1 \leq i < j \leq n$. Given two symmetric matrices $\B, \C$, we say $\B \preceq \C$ if $\C-\B$ is SPSD. This defines a partial ordering on the set of symmetric matrices called the Loewner partial ordering \cite[p. 493]{HornMatrix2017}.

\paragraph*{Gaussian random matrix}  For $k \leq n$, $\Om \in \R^{n\times k}$ denotes a matrix with independent entries drawn from the standard normal distribution. In this article, we use $\cN(\bs{\mu}, \bs{\Sigma})$ to denote a Gaussian measure with mean $\bs{\mu}$ and covariance $\bs{\Sigma}$.

\paragraph*{Matrix Function} Given a function $f: \mathbb{R} \rightarrow \mathbb{R}$, the matrix function $f(\A)$ is defined using the spectral decomposition $\A = \U \Lm \U^T$, as  $f(\A) = \U f(\Lm)\U^T $,  where $f(\Lm) = \diag(f(\lambda_1),\dots,f(\lambda_n))$. A function $f$ is called operator monotone \cite[Chapter 5]{bhatia} if $\C \mleq \D$ implies $f(\C) \mleq f(\D)$. In this article, we consider functions in the set $\mathscr{F}$, defined as
\begin{equation}
    \mathscr{F} := \big\{f: [0, \infty) \to [0, \infty) :\quad \text{$f(0) = 0$ and $f$ is operator monotone}\big\},
    \label{eq:op_mon_set}
\end{equation}
A few important matrix functions in $\mathscr{F}$ are listed in \Cref{tab:apps}. 
\begin{table}[ht]
    \centering
    \caption{Some important examples of operator monotone functions}
    \begin{tabular}{l l p{4.5cm}} 
        \toprule
        \textbf{Function ($f$)} & \textbf{Equivalent form of $\text{tr}(f(\mathbf{A}))$} & \textbf{Applications} \\
        \midrule
        $\log(1+x)$ & $\log\det(\mathbf{I}+\mathbf{A})$ & Bayesian inference \cite{AlexanderianEfficient2018}, machine learning \cite{KramerGradients2024} \\
        \addlinespace
        $x^p$ ($0 \leq p \leq 1$) & $\text{tr}(\mathbf{A}^{p})$ & Schatten norm estimation \cite{LiSketching2013}, Rényi entropy \cite{DongEfficient2025} \\
        \addlinespace
        $\frac{x}{x+\zeta}$ ($\zeta > 0$) & $\text{tr}(\mathbf{A}(\mathbf{A} + \zeta\mathbf{I})^{-1})$ & Effective dimension \cite{avronfasterkernel}, kernel learning \cite{DongScalable2017} \\
        \bottomrule
    \end{tabular}
    \label{tab:apps}
\end{table}

\paragraph*{Norms} Throughout this article, we use $\|\cdot\|_2$ to denote the spectral norm and $\|\cdot\|_{(p)}$ to denote the Schatten $p$-norm for $p \geq 1$. For enhanced clarity, when referring to the Frobenius ($p=2$) and nuclear ($p=1$) norms, we utilize the notation $\|\cdot\|_F$ and $\|\cdot\|_*$, respectively. We denote an arbitrary unitarily invariant norm as $\uinvnorm{\A}$, which satisfies $\uinvnorm{\A} = \uinvnorm{\U\A\V}$ for orthogonal matrices $\U, \V$.

\subsection{Implicit trace estimation}
\label{ssec:trace_estimation}
In this section, we review randomized methods for implicit trace estimation, that is, the estimation of $\tr(\A)$ using matvecs with $\A$. A simple yet ubiquitous randomized trace estimation technique is the Girard-Hutchinson trace estimator \cite{Girard1989,Hutchinson1989ASE}. The key idea behind this estimator stems from \Cref{thm:HutchVar}.
\begin{theorem}
    \cite[Lemma 9]{AvronToledo2011Hutch}
    Let $\A \in \R^{n \times n}$ be SPSD and $\om \sim \cN(\bs{0}, \I)$. Then, 
    \begin{align*}
        \Exp[\om^\top \A \om] = \tr(\A), \quad\text{and} \quad
        \Var[\om^\top \A \om] = 2\|\A\|_F^2.
    \end{align*}
    \label{thm:HutchVar}
\end{theorem}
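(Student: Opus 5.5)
The plan is to diagonalize $\A$ using its spectral decomposition $\A = \U\Lm\U^\top$ and exploit the rotational invariance of the standard Gaussian. Set $\vec{z} := \U^\top \om$. Since $\U$ is orthogonal, $\vec{z} \sim \cN(\bs{0}, \U^\top \U) = \cN(\bs{0}, \I)$, so the entries $z_1,\dots,z_n$ are independent standard normal random variables. The quadratic form then becomes a weighted sum of independent chi-squared variables:
\begin{equation*}
    \om^\top \A \om = \vec{z}^\top \Lm \vec{z} = \sum_{i=1}^n \lambda_i z_i^2 .
\end{equation*}

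For the expectation, I will use linearity together with $\Exp[z_i^2] = 1$. This gives $\Exp[\om^\top\A\om] = \sum_i \lambda_i = \tr(\A)$. For the variance, independence of the $z_i$ makes the variance of the sum equal to the sum of the variances, so
\begin{equation*}
    \Var[\om^\top\A\om] = \sum_{i=1}^n \lambda_i^2 \Var[z_i^2].
\end{equation*}
The only fact needed here is the Gaussian fourth moment $\Exp[z_i^4] = 3$. It gives $\Var[z_i^2] = 3 - 1 = 2$, and hence $\Var[\om^\top\A\om] = 2\sum_i \lambda_i^2$.

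The last step is to identify $\sum_i \lambda_i^2$ with $\|\A\|_F^2$. Since $\A$ is symmetric, $\|\A\|_F^2 = \tr(\A^\top\A) = \tr(\A^2) = \sum_i \lambda_i^2$, using the unitary invariance of the Frobenius norm.

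No step is a genuine obstacle. The only point needing care is the independence of the $z_i$: it follows from $\vec{z}$ being jointly Gaussian with identity covariance, and it is what lets the cross terms vanish in the variance computation. Positive semidefiniteness is not actually needed for this argument; symmetry suffices.
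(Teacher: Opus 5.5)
Your proof is correct and complete. Rotating by $\U^\top$ gives $\om^\top\A\om=\sum_i\lambda_i z_i^2$ with independent $z_i\sim\cN(0,1)$, and then $\Exp[z_i^2]=1$ and $\Var[z_i^2]=2$ finish the argument; this is the standard proof of this fact. The paper does not prove the statement itself but cites Avron and Toledo, where the standard treatment uses this same reduction to a weighted sum of independent $\chi^2_1$ variables, so the approaches coincide. Your remark that only symmetry (not positive semidefiniteness) is needed is also correct.
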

The Girard-Hutchinson estimator uses this property to estimate the trace with a Monte-Carlo approach. This involves drawing independent random vectors $\om_1, \om_2, \dots, \om_k$ from $\cN(\bs{0}, \I)$ and computing
\begin{equation}
    \trh_{GH} := \frac{1}{k}\sum_{i=1}^{k} \om_i^\top \A \om_i \approx \tr(\A).
    \label{eq:GH_estimator}
\end{equation} 
While this estimator is unbiased, its variance decays slowly at a rate of $\mc{O}(1/k)$. More recently, there have been several approaches \cite{Meyer2020HutchOS,Persson2021ImprovedVO,Epperly2023XTraceMT} that utilize randomized low-rank approximations to capture the (approximate) dominant eigenspace before employing this estimator. These methods provide $\mc{O}(1/k^2)$ variance decay, significantly outperforming the Girard-Hutchinson estimator for matrices with fast spectral decay. The randomized \Nystrom approximation (\Cref{ssec:Nystrom}) for SPSD matrices is particularly effective for such approaches.

\subsection{Randomized \Nystrom approximation}
\label{ssec:Nystrom}
The \Nystrom approximation \cite{GittensMahoney2013Nystrom} of $\A$ with test matrix $\Om$ is defined as,
\begin{equation}
    \Anys = \A\Om(\Om^\top \A\Om)^\dagger (\A\Om)^\top.
    \label{eq:nystrom}
\end{equation}
The randomized \Nystrom approximation constructs a low-rank approximation to $\A$ using a \textit{sketch} (a randomized embedding) of $\A$. This approximation is guaranteed \cite{chen_randomly_2025} to provide the \textit{best} rank-$k$ approximation of $\A$ whose range is contained by $\Range(\A\Om)$ and is majorized by $\A$ (i.e., $\Anys \mleq \A$) \cite[Theorem 5.3]{ando_schur_2005}. The \Nystrom approximation can be used to estimate the dominant eigenpairs of $\A$ efficiently; see \Cref{alg:Nystrom}.

\begin{algorithm}[ht]
    \caption{Low-rank approximation $[\Lmh, \Uh, \mathbf{C}, \Ut, \Z, \hat{r}] = \alg{NysSVD}(\A, \Om)$}
    \begin{algorithmic}[1]
        \itemsep0.25em
        \Require SPSD matrix $\A \in \R^{n \times n}$, test matrix $\Om \in \R^{n \times k}$
        \Ensure Diagonal $\Lmh \in \R^{k \times k}$ and $\Uh \in \R^{n \times k}$ with orthonormal columns such that $\Anys = \Uh\Lmh\Uh^\top$
        \State Form $\Y = \A\Om$
        \State Perform a thin QR decomposition, $\Y = \Q\Rm$
        \State Compute truncated pivoted Cholesky factor, $\C = \texttt{cholp}(\Om^\top \Y)$ where $\C \in \R^{k \times \hat{r}}$ with $\hat{r} \leq k$
        \State Perform backward solve, $\Z = \Rm\ /\ \C$
        \State Compute a thin SVD, $[\Ut, \Sgm, \sim] = \texttt{svd\_econ}(\Z)$
        \State Return $\Uh = \Q\Ut$ and $\Lmh = \Sgm^2$
        \State (optional) Return $\C, \Z, \Ut,$ and $\hat{r}$
    \end{algorithmic}
    \label{alg:Nystrom}
 \end{algorithm}\noindent

In practice, we avoid explicitly computing $(\Om^\top \A\Om)^\dagger$ as that is known to be numerically unstable. Instead, we utilize the \textit{Stabilized \Nystrom approximation} \cite{bucci_numerical_2025}, which utilizes a truncated pivoted Cholesky decomposition \cite[Section 10.3]{HighamAccuracy2002} to decompose $\Om^\top \A\Om = \C^\top \C$; this is denoted as \texttt{cholp} in line 3 of \Cref{alg:Nystrom}. We utilize the LAPACK \cite{LAPACK} routine \texttt{dpstrf}, which terminates the Cholesky decomposition once the largest remaining diagonal element is numerically zero; see \cite{LucasLapack2004,HammarlingLapack2004} for details.
This also enables quantifying the numerical rank of $\Anys$ which will be useful in \Cref{ssec:efficient}. See \cite{TroppWebber2023Survey,nakatsukasa_fast_2020,bucci_numerical_2025} for further details on the stability and implementations of the \Nystrom approximation.

\subsection{Implicit trace estimation with the \Nystrom approximation}
\label{ssec:impl_nys}
The randomized \Nystrom approximation of $\A$ provides a versatile and effective tool for trace estimation of low-rank or nearly low-rank matrices. 
Namely, $\tr(\Anys)$ can be used as an estimator for $\tr(\A)$ \cite{Saibaba2017RandomizedMT,PerssonKressner23}. Furthermore, utilizing this method alongside traditional randomized trace estimation techniques offers the potential for further performance. 
For example, we may consider a strategy where, rather than employing the 
entire sketch for the \Nystrom approximation, we reserve some vectors for a Girard-Hutchinson trace estimate of the residual. To formalize this construction, we let $\Omi$ and $\Yi$ denote the matrices $\Om$ and $\Y$ with the $i$-th column removed and define $\Ahi$ as the randomized \Nystrom approximation of $\mathbf{A}$ obtained with the test matrix $\Omi$ for $1 \leq i \leq n$. Subsequently, we may define a leave-one-out version of the \Nystrompp estimator \cite{Persson2021ImprovedVO}:
\begin{equation}
    \trh_{\rm Nys\pp} := \tr(\AhOne) + \om_1^\top (\A - \AhOne) \om_1.
    \label{eq:Nyspp_estimator}
\end{equation}
This typically offers a more balanced approach than using the low-rank approximation or Girard-Hutchinson estimator in isolation, and one may strategically allocate the matvecs utilized for each, depending on spectral properties of the matrix. 

The \Nystrom approximation can be further used to estimate matrix functions and their trace. Specifically, the \textit{fun\Nystrom} matrix approximation $f(\A) \approx f(\Anys)$ \cite{PerssonKressner23} estimates $f(\A)$ by applying the matrix function to the randomized \Nystrom approximation of $\A$. This can be efficiently computed using the eigendecomposition $\Anys = \Uh\Lmh\Uh^\top$ obtained in \Cref{alg:Nystrom} as $f(\Anys) = \Uh f(\Lmh) \Uh^\top$. Note that the fun\Nystrom approximation entirely avoids matvecs with $f(\A)$ and only requires matvecs with $\A$. This approach also enables the estimation of the trace, $\tr(f(\A)) \approx \trh_{\rm FN} := \tr(f(\Anys))$ \cite{PerssonKressner23,Saibaba2017RandomizedMT}. This estimator, which we call \funNys, is particularly effective for matrices $\A$ that are approximately low-rank. Similar to \Cref{eq:Nyspp_estimator}, this approach can also be used in conjunction with a Monte Carlo estimator with one sample vector, yielding the \funNystrompp estimator \cite{PerssonKressner23}:
\begin{equation}
    \trh_{\rm funNys++} := \tr(f(\AhOne)) + \om_1^\top (f(\A) - f(\AhOne)) \om_1.
    \label{eq:funNyspp_estimator}
\end{equation}
Although the \funNystrompp estimator is unbiased, it requires, unlike \funNys, the need for matvecs with $f(\A)$, which may not be accessible. We point out that both the \Nystrompp and \funNystrompp estimators are independent of the order of $(\om_2, \dots, \om_k)$, since $\AhOne$ is invariant to such order (see \nysprop{item:nys_invar}).

\subsection{Exchangeable trace estimation}

Although the \Nystrompp trace estimation algorithm is effective, it lacks a property called \textit{exchangeability}, that is, permutation-invariance with respect to random vectors $\{\om_i\}_{i=1}^k$. We make the definition of exchangeability precise in \Cref{def:exchangeable}.

\begin{definition}
    Let $Q : (\mathbb{R}^n)^k \to \mathbb{R}$ be a measurable function. 
    We say that $Q$ is exchangeable if for every permutation $\sigma \in S_k$
    and every $(\om_1,\dots, \om_k) \in (\mathbb{R}^n)^k$,
    \[
        Q(\om_1, \dots, \om_k)=Q(\om_{\sigma(1)}, \dots, \om_{\sigma(k)}),
    \]
    Here, $S_k$ denotes the symmetric group of degree $k$.
    \label{def:exchangeable}
\end{definition}
The importance of exchangeability is traced back to the \textit{exchangeability principle} \cite{HalmosTheory1946}, which states that the minimum-variance unbiased estimator must be exchangeable. The paper \cite{Epperly2023XTraceMT} utilizes this idea in the context of randomized linear algebra. In \Cref{thm:symmetrize}, we build on this insight and demonstrate that averaging an estimator over permutations of random vectors is guaranteed to provide an estimator with a lower mean-squared-error (MSE)\footnote{The MSE of an estimator $\hat{\theta}$ of a parameter $\theta$ is defined as $\MSE[\hat{\theta}] := \Exp (\theta - \hat{\theta})^2$.}. This is a key result that motivates our approach throughout the paper.

\begin{proposition}
    \label{thm:symmetrize}
    Let $\Qh(\om_1, \dots, \om_k)$ be an estimator for a quantity of interest $Q$, obtained with independent and identically distributed random vectors $\om_1, \dots, \om_k$, with finite second moments. Consider the symmetrized estimator, %
    \begin{equation}
        \overline{Q}(\om_1, \dots, \om_k) := \frac{1}{|S_k|} \sum_{\sigma \in S_k}\Qh(\om_{\sigma(1)}, \dots, \om_{\sigma(k)}),
        \label{eq:symmetrize}
    \end{equation}
    where $S_k$ denotes the symmetric group of degree $k$ with order $|S_k| = k!$. Then, $\overline{Q}$ is an exchangeable estimator with the same bias as $\Qh$ but lower variance. Namely, %
    \[(i)\ \Exp[\overline{Q}] = \Exp[\Qh]; \quad (ii)\ \Var[\overline{Q}] \leq \Var[\Qh]; \quad\text{and}\quad (iii)\ \MSE[\overline{Q}] \leq \MSE[\Qh].\]
\end{proposition}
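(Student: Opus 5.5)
The argument rests on one fact: since $\om_1,\dots,\om_k$ are i.i.d., the joint law of $(\om_{\sigma(1)},\dots,\om_{\sigma(k)})$ equals that of $(\om_1,\dots,\om_k)$ for every $\sigma\in S_k$. Write $\Qh_\sigma := \Qh(\om_{\sigma(1)},\dots,\om_{\sigma(k)})$. Then each $\Qh_\sigma$ has the same distribution as $\Qh$. In particular $\Exp[\Qh_\sigma]=\Exp[\Qh]$ and $\Var[\Qh_\sigma]=\Var[\Qh]$, and both are finite by the finite second moment assumption.

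\textbf{Exchangeability.} Fix $\tau\in S_k$ and evaluate $\overline{Q}$ at $(\om_{\tau(1)},\dots,\om_{\tau(k)})$. The $j$-th argument passed to $\Qh$ in the $\sigma$ term is then $\om_{\tau(\sigma(j))}$. So the sum runs over $\Qh_{\tau\circ\sigma}$, and since $\sigma\mapsto\tau\circ\sigma$ is a bijection of $S_k$, the sum is unchanged.

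\textbf{Part (i).} This follows from linearity of expectation: $\Exp[\overline{Q}]=\frac{1}{k!}\sum_\sigma \Exp[\Qh_\sigma]=\Exp[\Qh]$. Hence $\overline{Q}$ and $\Qh$ have the same bias.

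\textbf{Part (ii).} Expand the variance of an average as a double sum of covariances:
\[
\Var[\overline{Q}] = \frac{1}{(k!)^2}\sum_{\sigma,\sigma'}\Cov(\Qh_\sigma,\Qh_{\sigma'}).
\]
By Cauchy--Schwarz, each covariance is at most $\sqrt{\Var[\Qh_\sigma]\Var[\Qh_{\sigma'}]}=\Var[\Qh]$. Summing $(k!)^2$ such terms gives $\Var[\overline{Q}]\le\Var[\Qh]$. Equivalently, one can use convexity of $x\mapsto x^2$ (Jensen over the uniform average on $S_k$) applied to the centered quantities $\Qh_\sigma-\Exp[\Qh]$.

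\textbf{Part (iii).} Use the bias–variance decomposition $\MSE = \mathrm{bias}^2 + \Var$. The bias terms agree by (i), and the variance terms are ordered by (ii), so $\MSE[\overline{Q}]\le\MSE[\Qh]$.

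The only step needing care is the equality in distribution of $\Qh_\sigma$ and $\Qh$. This requires identical distribution together with independence, so that the joint law is a product measure and hence permutation invariant. Measurability of $\Qh$ ensures each $\Qh_\sigma$ is a well-defined random variable. Everything else is a routine application of linearity and Cauchy–Schwarz.
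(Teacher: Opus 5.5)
Your proof is correct and follows the paper's argument: (i) comes from linearity and the identical distribution of the permuted estimators, (ii) from a convexity bound, and (iii) from the bias--variance decomposition. There are two cosmetic differences. You bound the covariance double sum via Cauchy--Schwarz, whereas the paper applies Jensen's inequality to $\overline{Q}^2$ and then subtracts the equal squared means. You also verify exchangeability explicitly through the bijection $\sigma\mapsto\tau\circ\sigma$, which the paper treats as immediate by construction.
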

\begin{proof}
    See \Cref{sec:proofs}.
\end{proof}

We observe that \Cref{thm:symmetrize} can be interpreted as a specific case of the Rao-Blackwell Theorem \cite[Theorem 7.3.17]{CasellaStatistical2002}, with the multiset of random vectors as a sufficient statistic, and a similar result for unbiased estimators has been observed in \cite[p. 320]{RaoLinear1973}. We highlight that unlike the exchangeability principle, \Cref{thm:symmetrize} applies to both unbiased and biased estimators. 
The recent work \cite{Epperly2023XTraceMT} uses the idea of exchangeability to develop the trace estimation technique, \alg{XNysTrace}, by symmetrizing the \Nystrompp estimator \Cref{eq:Nyspp_estimator}. Since the \Nystrompp estimator is already invariant to permutations of $(\om_2, \dots, \om_k)$ as observed in \Cref{ssec:impl_nys}, the symmetrization can be concisely expressed as:
\begin{equation}
    \trh_{\rm XN} := \frac{1}{k}\sum_{i=1}^{k} \left[\tr(\Ahi) + \om_i^\top (\A - \Ahi) \om_i\right].
    \label{eq:xnys_estimator}
\end{equation}
While \alg{XNysTrace} and similar methods enable efficient estimation of the matrix trace, such methods do not easily extend to estimating the trace of a matrix function without matvecs with $f(\A)$.

\section{Efficient trace estimation with \algname}
\label{sec:main}
In this section, we demonstrate the potential of exchangeable approaches to the estimation of the trace of matrix functions. In \Cref{ssec:overview}, we present two novel randomized trace estimation approaches for operator monotone functions of matrices, i--\algname and \algname. This is followed by theoretical analysis of these methods in \Cref{ssec:key_theory}. Finally, we provide an efficient and stable computational implementation for \algname in \Cref{ssec:efficient}. 

\subsection{Overview of approach}
\label{ssec:overview}
We first present an idealized estimator, \ialgname, which serves as a precursor and theoretical tool to guide the analysis of our proposed algorithm, \algname. \ialgname is defined by symmetrizing the \funNystrompp estimator. This symmetrization, similar to that of \Nystrompp, is simplified by utilizing the invariance of \funNystrompp to permutations of $(\om_2, \dots, \om_k)$. Namely, for independent random vectors $\om_1, \dots, \om_k \sim \GM{\bs{0}, \bs{I}}$, the \ialgname estimator can be expressed as,
\begin{equation}
    \trh_{\mathrm{iFT}} := \frac1k \sum_{i=1}^k\left(\tr f(\Ahi) + \om_i^\top (f(\A) - f(\Ahi))\om_i \right).
    \label{eq:iFT}
\end{equation}
Since \ialgname is a symmetrization of \funNystrompp, \ialgname is an exchangeable, unbiased estimator with lower variance than \funNystrompp by \Cref{thm:symmetrize}. However, note that \Cref{eq:iFT} is an idealized estimator, since computing the quadratic form $\om_i^\top f(\A)\om_i$ requires matvecs with $f(\A)$ for $1 \leq i \leq k$, which we wish to avoid. 

To eliminate this bottleneck, we propose \algname (\Cref{alg:practical}),  which replaces $f(\A)$ in the idealized algorithm with $f(\Anys)$. The estimator, \algname, is thus defined for independent random vectors $\om_1, \dots, \om_k \sim \GM{\bs{0}, \bs{I}}$ as,
\begin{equation}
    \trh_{\mathrm{FT}} := \frac1k \sum_{i=1}^k\left(\tr f(\Ahi) + \om_i^\top (f(\Anys) - f(\Ahi))\om_i \right).
    \label{eq:FT}
\end{equation}
Similar to \ialgname, \algname is also an exchangeable estimator; see \Cref{thm:flex_ex}. We outline the corresponding computational procedure for \algname in \Cref{alg:practical}. In the special case that $f$ is linear, $\om_i^\top f(\A)\om_i = \om_i^\top f(\Anys)\om_i$ for $1\le i \le k$, and both estimators, \ialgname ($\trh_{\mathrm{FT}}$) and \algname ($\trh_{\mathrm{iFT}}$), are the same as the \alg{XNysTrace} estimator ($\trh_{\rm XN}$, see~\eqref{eq:xnys_estimator}). Moreover, if $f$ is well-approximated by a quadratic function on the spectrum of $\A$, $\om_i^\top f(\A)\om_i$ is well-approximated by $\om_i^\top f(\Anys)\om_i$ for $ 1 \le i \le k$, thanks to \nysprop{item:nys_quad}. 

\begin{algorithm}[ht]
    \caption{\algname\!($\A, k$)}
    \begin{algorithmic}[1]
        \itemsep0.25em
        \Require SPSD matrix $\A \in \R^{n \times n}$, matvec budget $k$
        \Ensure Trace approximation $\trh_{\mathrm{FT}}$
        \State Generate standard Gaussian random matrix $\Om \in \R^{n \times k}$
        \State Form $\Y = \A\Om$ 
        \State Compute $\Anys = \Y(\Om^\top \Y)^\dagger \Y^\top$
        \For {$i = 1, \ldots, k$}
            \State Define $\Ahi := \Yi(\Omi^\top \Yi)^\dagger \Yi^\top$ \Comment{Remove $i$-th column of $\Om$ and $\Y$}
            \State Compute $\trh_i := \tr f(\Ahi) + \om_i^\top (f(\Anys) - f(\Ahi))\om_i$
        \EndFor
        \State Compute $\trh_{\mathrm{FT}} := \frac{1}{k}\sum_{i=1}^k \trh_i$
    \end{algorithmic}
    \label{alg:practical}
\end{algorithm}\noindent

\begin{theorem}
    The \algname estimator \Cref{eq:FT} is an exchangeable estimator, in the sense of \Cref{def:exchangeable}. 
    \label{thm:flex_ex}
\end{theorem}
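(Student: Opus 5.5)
The plan is to show that $\trh_{\mathrm{FT}}$, viewed as a function of $(\om_1,\dots,\om_k)$, is unchanged when the columns of $\Om$ are permuted. The proof rests on one key fact: a \Nystrom approximation depends on its test matrix only through the column space of that matrix, or at least is invariant when the test matrix is right-multiplied by an invertible (in particular, permutation) matrix. This is the content of \nysprop{item:nys_invar}. For completeness we can verify it directly from \Cref{eq:nystrom}. If $\mat{P}$ is a $k\times k$ permutation matrix, then
\[
\A\Om\mat{P}\,(\mat{P}^\top\Om^\top\A\Om\mat{P})^\dagger(\A\Om\mat{P})^\top = \A\Om\mat{P}\mat{P}^\top(\Om^\top\A\Om)^\dagger\mat{P}\mat{P}^\top\Om^\top\A = \Anys.
\]
The identity $(\mat{P}^\top\mat{M}\mat{P})^\dagger=\mat{P}^\top\mat{M}^\dagger\mat{P}$ used here holds for orthogonal $\mat{P}$; it can be checked from the Penrose conditions.

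Fix $\sigma\in S_k$ and set $\om'_j:=\om_{\sigma(j)}$, with corresponding $\Om'=\Om\mat{P}_\sigma$. We must show $\trh_{\mathrm{FT}}(\om'_1,\dots,\om'_k)=\trh_{\mathrm{FT}}(\om_1,\dots,\om_k)$, and we treat the pieces of the estimator in turn.

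First, the full approximation $\Anys'$ built from $\Om'$ equals $\Anys$ by the invariance above. Hence $f(\Anys')=f(\Anys)$, since $f$ is applied spectrally to the same matrix.

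Second, consider the leave-one-out approximations. Removing the $j$-th column of $\Om'$ leaves exactly the columns $\{\om_l: l\neq\sigma(j)\}$, in some order. This matrix is therefore $\Om_{\mr \sigma(j)}$ times a $(k-1)\times(k-1)$ permutation matrix. By the same invariance, $\Ah'_{\mr j}=\Ah_{\mr \sigma(j)}$, and thus $f(\Ah'_{\mr j})=f(\Ah_{\mr\sigma(j)})$.

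Combining these, the $j$-th summand of the permuted estimator is
\[
\tr f(\Ah_{\mr\sigma(j)})+\om_{\sigma(j)}^\top\bigl(f(\Anys)-f(\Ah_{\mr\sigma(j)})\bigr)\om_{\sigma(j)},
\]
which is exactly the $\sigma(j)$-th summand of the original estimator. Since $j\mapsto\sigma(j)$ is a bijection of $\{1,\dots,k\}$, averaging over $j$ reindexes the sum and gives $\trh_{\mathrm{FT}}$. The same argument shows that $\trh_{\mathrm{FT}}$ is a measurable function of $(\om_1,\dots,\om_k)$, since the pseudoinverse is Borel measurable; this completes the check against \Cref{def:exchangeable}.

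The only step with any real content is the permutation-invariance of the \Nystrom approximation, and in particular the pseudoinverse identity in the rank-deficient case. We can either cite \nysprop{item:nys_invar} or verify the identity via the Penrose conditions. Everything else is bookkeeping of indices.
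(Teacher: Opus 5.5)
Your proof is correct. It uses the same key fact as the paper, namely the column-permutation invariance of the \Nystrom approximation (\nysprop{item:nys_invar}), but the argument is organized differently.

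The paper first notes that $\trh_1$ is invariant under permutations of $(\om_2,\dots,\om_k)$. Averaging $\trh_1$ over all of $S_k$ therefore collapses to $\frac1k\sum_i \trh_i$. So $\trh_{\rm FT}$ is exactly the symmetrization of $\trh_1$ in \Cref{thm:symmetrize}, and it is exchangeable by construction. You instead fix an arbitrary $\sigma$, show that the $j$-th summand of the permuted estimator equals the $\sigma(j)$-th summand of the original, and reindex.

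Your route is more self-contained. You verify the invariance directly from \Cref{eq:nystrom} via $(\mat{P}^\top\mat{M}\mat{P})^\dagger=\mat{P}^\top\mat{M}^\dagger\mat{P}$. This handles the rank-deficient case without the projection form in \nysprop{item:nys_proj}.

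The paper's route gives the identity $\trh_{\rm FT}=\overline{\trh_1}$ as a by-product, and that identity is reused later. The proof of \Cref{thm:var_FT} bounds $\MSE[\trh_{\rm FT}]$ by $\MSE[\trh_1]$ exactly through \Cref{thm:symmetrize}. Your summand computation already shows $\trh_1(\om_{\sigma(1)},\dots,\om_{\sigma(k)})=\trh_{\sigma(1)}$. Stating that one line explicitly would give you the same identity.
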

\begin{proof}
    See \Cref{ssec:algname_proofs}.
\end{proof}

Since \algname does not require matvecs with $f(\A)$, it offers an efficient and practical way of estimating $\tr(f(\A))$. However, a na\"ive implementation of the method can be computationally inefficient and susceptible to numerical instability. We provide a more refined implementation of \algname in \Cref{ssec:efficient} that is more suitable for practical use.

We highlight two other features of the method that make \algname appealing compared to other approaches. First, the approach is \textit{single pass}, meaning that all the matvecs with $\A$ can be accessed simultaneously. This is beneficial in situations where repeated access to $\A$ is not available or has a latency. Second, it is \textit{function-agnostic} in that $\tr(f(\A))$ can be estimated efficiently for multiple matrix functions at once at negligible additional cost. This can be beneficial if the function $f_\theta$ is parameter-dependent and $\tr(f_\theta(\A))$ needs to be computed for multiple values of the parameter $\theta$. Such scenarios are common in spectral density estimation \cite{MattiStochastic2025} and kernel methods \cite{DongScalable2017}.

\subsection{Key Theoretical Results}
\label{ssec:key_theory}
In this section, we provide theoretical guarantees on the bias and MSE of the \ialgname and \algname estimators. We analyze the bias and MSE in \Cref{sssec:bias,sssec:variance} respectively, followed by an asymptotic interpretation of these bounds in \Cref{ssec:asymptotic}. Our goal is to analytically quantify the expected error of the trace estimators in terms of the spectral properties of the matrix $\A$ and the matrix function $f$. We adopt a two-pronged approach. First, we structurally bound the bias and MSE of the two proposed estimators in terms of the expected error of the fun\Nystrom approximation. Second, we derive upper bounds on that expected error in terms of the spectral properties of the matrix $\A$. It is noteworthy that theoretical results obtained for the latter are more broadly applicable, beyond the task of trace estimation. The proofs of all theorems in this section are provided in \Cref{sec:proofs}.

\subsubsection{Analysis of bias}
\label{sssec:bias}
First, we show that \ialgname is an unbiased estimator of the trace of $f(\A)$.
\begin{theorem}
    Consider an SPSD matrix $\A$ and let $f \in \mathscr{F}$. Then, the \ialgname estimator \Cref{eq:iFT} is an unbiased estimator of $\tr(f(\A))$.
    \label{thm:bias_iFT}
\end{theorem}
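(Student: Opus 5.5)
The plan is to show that each summand in \Cref{eq:iFT} is individually unbiased; averaging over $i$ then gives the claim by linearity of expectation. So fix $i$ and set $\trh_i := \tr f(\Ahi) + \om_i^\top (f(\A) - f(\Ahi))\om_i$. The key structural fact is that $\Ahi$ depends only on $\Omi$, the columns $\om_j$ with $j \neq i$, and is therefore independent of $\om_i$. Consequently the matrix $\B_i := f(\A) - f(\Ahi)$ is a random symmetric matrix that is independent of $\om_i \sim \GM{\bs{0}, \I}$.

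Conditioning on $\Omi$, I would apply \Cref{thm:HutchVar} to the fixed matrix $\B_i$. That result is stated for SPSD matrices, but the expectation identity $\Exp[\om^\top \B \om] = \tr(\B)$ holds for any fixed square $\B$, since $\Exp[\om\om^\top] = \I$. This gives
\[
\Exp\left[\om_i^\top \B_i \om_i \,\middle|\, \Omi\right] = \tr(f(\A)) - \tr(f(\Ahi)).
\]
If one prefers to stay strictly within \Cref{thm:HutchVar}, note that $\Ahi \mleq \A$ (the Loewner majorization of the \Nystrom approximation noted in \Cref{ssec:Nystrom}), so operator monotonicity of $f \in \mathscr{F}$ gives $f(\Ahi) \mleq f(\A)$. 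Hence $\B_i$ is SPSD and the cited theorem applies verbatim. This is the only place where the assumption $f\in\mathscr{F}$ enters.

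Adding $\tr f(\Ahi)$ yields $\Exp[\trh_i \mid \Omi] = \tr f(\A)$. The tower property then gives $\Exp[\trh_i] = \tr f(\A)$, and averaging over $i$ gives $\Exp[\trh_{\mathrm{iFT}}] = \tr f(\A)$.

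The only technical point, and thus the main (mild) obstacle, is integrability, which is needed so that conditional expectations and the tower property are legitimate. Since $f$ is nonnegative and $0 \mleq f(\Ahi) \mleq f(\A)$, we have $0 \le \tr f(\Ahi) \le \tr f(\A)$ and $0 \le \om_i^\top \B_i \om_i \le \|f(\A)\|_2 \|\om_i\|_2^2$. Each term is therefore bounded by an integrable quantity, and all expectations are finite. Measurability of $\Ahi$ as a function of $\Omi$, including the pseudoinverse, is standard: the pseudoinverse is Borel measurable.
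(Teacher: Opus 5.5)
Your proposal is correct and follows essentially the same route as the paper. Both arguments reduce to unbiasedness of a single summand $\trh_i$, condition on $\Omi$ (on which $\Ahi$ alone depends), apply the Girard--Hutchinson identity, and conclude via the tower property.

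The only differences are cosmetic. The paper splits the summand into $\om_i^\top f(\A)\om_i - \tr f(\A)$ and $\tr f(\Ahi) - \om_i^\top f(\Ahi)\om_i$, and applies the Hutchinson identity to each SPSD piece separately, so operator monotonicity is not actually needed there. You instead apply it once to the difference $f(\A)-f(\Ahi)$, and you additionally check integrability, which the paper leaves implicit.
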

\begin{proof}
    See \Cref{ssec:ialgname_proofs}.
\end{proof}

Although \ialgname is unbiased for all functions, \algname is typically biased. \Cref{thm:bias_FT} states that the bias in \algname is bounded by the bias of a \funNys approximation with $(k - 1)$ matvecs, which, in turn, is bounded in terms of the trailing eigenvalues of $\A$. In particular, \Cref{thm:bias_FT} demonstrates that the bias in \algname can be small for matrices with low-rank or near- low-rank structure.

\begin{theorem}
    Consider an SPSD matrix $\A$ and let $f \in \mathscr{F}$. For $k \geq 4$, \algname (\Cref{alg:practical}) satisfies, 
    \[0 \leq \Exp\left[\tr(f(\A))-\trh_{\mathrm{FT}} \right] \leq \Exp\left[ \|f(\A) - f(\AhOne)\|_* \right] \leq (k-3)\|f(\Lm_{k\mr 2:n})\|_*.\]
    If $f$ is linear, then $\trh_{\mathrm{FT}}$ is unbiased for any $k \geq 1$.
    \label{thm:bias_FT}
\end{theorem}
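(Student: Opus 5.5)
By exchangeability (\Cref{thm:flex_ex}) and the identical distribution of the leave-one-out terms, every summand in \Cref{eq:FT} has the same expectation. Hence
\[
\Exp[\tr(f(\A))-\trh_{\mathrm{FT}}] = \Exp\big[\tr f(\A) - \tr f(\AhOne) - \om_1^\top(f(\Anys)-f(\AhOne))\om_1\big].
\]
The estimator \ialgname is unbiased (\Cref{thm:bias_iFT}), and its summands are exchangeable in the same way. So $\Exp[\tr f(\A)-\tr f(\AhOne)] = \Exp[\om_1^\top(f(\A)-f(\AhOne))\om_1]$. Subtracting, the bias collapses to
\[
\Exp[\tr(f(\A))-\trh_{\mathrm{FT}}] = \Exp\big[\om_1^\top (f(\A)-f(\Anys))\om_1\big].
\]
If $f$ is linear then $f(\A)-f(\Anys) = c(\A-\Anys)$. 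Taking the per-term expectation directly, $\om_1$ is independent of $\AhOne$, and $\Exp[\tr(\A-\AhOne)] = \Exp[\om_1^\top(\A-\AhOne)\om_1] = \Exp[\om_1^\top(\A-\Anys)\om_1] + \Exp[\om_1^\top(\Anys - \AhOne)\om_1]$. In this case the estimator coincides with \alg{XNysTrace}, and I would cite or repeat its unbiasedness; the cleanest check is that $\om_1^\top\Anys\om_1 = \om_1^\top \A\om_1$, because $\Anys\Om=\A\Om$. So the linear case gives zero bias for every $k\ge1$.

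For the lower bound I use two facts. First, $\Anys\mleq\A$. Second, $f$ is operator monotone. Together they give $f(\Anys)\mleq f(\A)$, so the quadratic form above is nonnegative.

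For the middle inequality I use that the \Nystrom approximation is monotone in the test subspace. Since $\Range(\Y_{\mr 1})\subseteq\Range(\Y)$, we have $\AhOne\mleq\Anys$; this should be one of the items of the \Nystrom-properties lemma, following from the best-approximation characterization. Operator monotonicity then gives $f(\A)-f(\Anys)\mleq f(\A)-f(\AhOne)$, and therefore
\[
\Exp[\om_1^\top(f(\A)-f(\Anys))\om_1] \le \Exp[\om_1^\top(f(\A)-f(\AhOne))\om_1].
\]
Now $\om_1$ is independent of $\AhOne$, so conditioning on $\Om_{\mr 1}$ turns the right-hand side into $\Exp\,\tr(f(\A)-f(\AhOne))$. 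This trace equals $\Exp\|f(\A)-f(\AhOne)\|_*$ because the matrix is PSD.

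The last inequality is a fun\Nystrom error bound with $k-1$ Gaussian vectors, and I expect it to be the main obstacle. The plan has three steps.

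\emph{Step 1: subadditivity.} For concave $f\ge0$ with $f(0)=0$ and $0\mleq\B\mleq\A$, the Rotfel'd/Bourin--Uchiyama subadditivity gives $\tr f(\A)-\tr f(\B)\le \tr f(\A-\B)$. Applied with $\B=\AhOne$, this reduces the quantity to $\Exp\,\tr f(\A-\AhOne)$.

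\emph{Step 2: structural bound.} Split $\A$ at rank $r=k-3$, so that $p=k-1-r=2$ vectors are oversampling. Let $\Om_1=\U_{:,1:r}^\top\Om_{\mr 1}$ and $\Om_2$ be the complementary block. The standard \Nystrom structural bound gives
\[
\A-\AhOne \mleq \Lm_2 + \Lm_2^{1/2}\Om_2\Om_1^\dagger(\cdots)\Om_1^{\dagger\top}\Om_2^\top\Lm_2^{1/2}
\]
in rotated coordinates, where $\Lm_2=\Lm_{k\mr 2:n}$.

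\emph{Step 3: pass $f$ through the expectation.} I would use that $f$ is operator concave, which holds for operator monotone $f$ on $[0,\infty)$. Jensen in the form $\Exp\,\tr f(\X)\le\tr f(\Exp\X)$ moves the expectation inside, and then $\Exp\|\Om_1^\dagger\|_F^2=r/(p-1)$ controls the error term. Combining this with monotonicity and subadditivity of $f$, and with $f(c x)\le c f(x)$ for $c\ge1$ (concavity with $f(0)=0$), should give a constant multiple of $\|f(\Lm_{k\mr 2:n})\|_*$.

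The delicate part is getting exactly the constant $(k-3)$ rather than $k-2$. This requires a careful split of the tail term, bounding the error term by $r\,f(\cdot)$ without the extra identity contribution. I would attempt it by handling the $\Lm_2$ part jointly with the error term through Step 1 applied in reverse.
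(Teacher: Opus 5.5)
Your treatment of the first two inequalities and of the linear case is correct, and it is essentially the paper's argument. The bias equals $\Exp[\om_1^\top(f(\A)-f(\Anys))\om_1]$. Operator monotonicity together with $\AhOne\mleq\Anys\mleq\A$ places this between $0$ and $\Exp[\om_1^\top(f(\A)-f(\AhOne))\om_1]$. Conditioning on $\Om_{\mr 1}$ then turns the latter into $\Exp\|f(\A)-f(\AhOne)\|_*$. Your linear-case check via $\Anys\om_1=\A\om_1$ is, if anything, more direct than the paper's remark that \algname then coincides with \ialgname.

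The gap is the last inequality. The paper does not derive it; it cites \cite[Theorem 3.9]{PerssonKressner23}. Your Steps 1--3 are only a sketch: Step 2 leaves the Loewner bound unspecified, and the closing paragraph contains no argument. More importantly, no refinement of them can deliver the constant $k-3$. Carry the route out properly, with the $(k-3)$-partition so that $\Lm_2=\Lm_{k\mr 2:n}$ and $p=2$:
\begin{itemize}
\item Start from the structural bound $\|f(\A)-f(\AhOne)\|_*\le\|f(\Lm_2)\|_*+\tr f(\Lm_2^{1/2}\Z\Z^\top\Lm_2^{1/2})$ with $\Z=\Om_2\Om_1^\dagger$ \cite[Lemma 3.13]{PerssonKressner23}. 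This is the unsquared form of \Cref{lem:nystr_expand} together with \Cref{eq:oversampling}, and it makes your Rotfel'd step unnecessary.
\item Apply Jensen for the concave map $\X\mapsto\tr f(\X)$, using $\Exp[\Om_2\mat{M}\Om_2^\top\mid\Om_1]=\tr(\mat{M})\I$ and $\Exp\|\Om_1^\dagger\|_F^2=r/(p-1)=k-3$.
\item Finish with $f(tx)\le tf(x)$ for $t\ge 1$.
\end{itemize}
The result is $(1+(k-3))\|f(\Lm_{k\mr 2:n})\|_*=(k-2)\|f(\Lm_{k\mr 2:n})\|_*$.

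The constant $k-3$ is in fact not attainable. Take $k=4$, $f(x)=x$, and $\A=\diag(1,\epsilon\I_{n-1})$ with $(n-1)\epsilon=c$ fixed. Write $\bs g^\top$ for the first row of $\Om_{\mr 1}$ and $\G$ for its remaining rows. A direct computation gives $\tr(\A-\AhOne)=(n-4)\epsilon+\epsilon s+\bigl(1+\bs g^\top(\epsilon\G^\top\G)^{-1}\bs g\bigr)^{-1}$ with $s\in[0,1]$. As $n\to\infty$ this yields $\Exp\|\A-\AhOne\|_*\to c+\Exp[c/(c+\chi^2_3)]>c=(k-3)\|f(\Lm_{2:n})\|_*$. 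The ratio of this limit to $c$ tends to $2=k-2$ as $c\to 0$, so $k-2$ is sharp here.

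So the third inequality as stated is off by one, presumably a slip in instantiating a bound of the form $1+r/(p-1)$ with $r=k-3$, $p=2$. Your suspicion that $k-2$ is what naturally comes out is correct. Drop the proposed ``Step 1 in reverse'' repair. What can actually be proved is $\Exp\|f(\A)-f(\AhOne)\|_*\le(k-2)\|f(\Lm_{k\mr 2:n})\|_*$.
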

\begin{proof}
    See \Cref{ssec:algname_proofs}.
\end{proof}

\Cref{thm:bias_FT} provides a guarantee for the trace estimator for \textit{any} function $f \in \mathscr{F}$. Nevertheless, we point out that this result can be improved for specific known functions; for example, the improved analysis of the fun\Nystrom approximation with respect to the matrix square root function provided in \cite[Section 3.3.3]{PerssonKressner23} can be utilized for tighter bounds.

\subsubsection{Analysis of MSE of estimators}
\label{sssec:variance}
Bounds for the MSEs of \ialgname and \algname estimators are presented in \Cref{thm:var_iFT} and \Cref{thm:var_FT}, respectively, in terms of the residual norm of the low-rank approximation to $f(\A)$. Note that the upper bounds for the MSE are also bounds for the variance of estimators, since the variance of an estimator is dominated by its MSE. For \ialgname, which is unbiased, the variance of the estimator is simply its MSE.
\begin{theorem}
    Consider an SPSD matrix $\A$ and let $f \in \mathscr{F}$. Then, the \ialgname  estimator \Cref{eq:iFT} satisfies,
    \[
        \MSE \left[\trh_{\rm iFT}\right] \leq \frac{2}{k} \, \Exp \left[ \| f(\A) - f(\AhOne) \|_F^2 \right]
        + 2 \, \Exp \left[ \| f(\AhOne) - f(\AhOneTwo) \|_F^2 \right],
    \]
    where $\AhOneTwo$ denotes the \Nystrom approximation of $\A$ with respect to the test matrix $\Om$ with the first two columns removed.
    \label{thm:var_iFT}
\end{theorem}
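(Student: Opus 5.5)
The plan is to use the leave-one-out structure of \Cref{eq:iFT} together with exchangeability, following the XTrace-style variance argument of \cite{Epperly2023XTraceMT}. Write $\F := f(\A)$ and $\F_i := f(\Ahi)$, and let $\F_{\mr\{1,2\}} := f(\AhOneTwo)$. Set
\[
g_i := \tr(\F - \F_i) - \om_i^\top (\F - \F_i)\om_i ,
\]
so that $\trh_{\rm iFT} - \tr(\F) = -\frac1k\sum_{i=1}^k g_i$.

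Since $\trh_{\rm iFT}$ is unbiased by \Cref{thm:bias_iFT}, its MSE equals its variance. The joint law of $(\om_1,\dots,\om_k)$ is exchangeable, so all $g_i$ share one distribution and all pairs $(g_i,g_j)$ with $i\neq j$ share one joint distribution. Expanding the square therefore gives
\[
\MSE[\trh_{\rm iFT}] = \frac1k \Exp[g_1^2] + \frac{k-1}{k}\Exp[g_1 g_2] \le \frac1k \Exp[g_1^2] + \bigl|\Exp[g_1g_2]\bigr|.
\]

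\emph{Diagonal term.} The matrix $\F_1$ depends only on $\Om_{\mr 1}$, so it is independent of $\om_1$. Condition on $\Om_{\mr 1}$. By \Cref{thm:HutchVar}, applied to the symmetric matrix $\F-\F_1$, we get $\Exp[g_1 \mid \Om_{\mr 1}]=0$ and $\Exp[g_1^2\mid \Om_{\mr 1}] = 2\|\F-\F_1\|_F^2$. The Gaussian variance identity holds for any symmetric matrix, and in any case $\F - \F_1 \succeq 0$ because $\AhOne \mleq \A$ and $f$ is operator monotone. Taking expectations gives $\Exp[g_1^2] = 2\Exp\|f(\A)-f(\AhOne)\|_F^2$, which is the first term of the bound.

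\emph{Cross term.} This is the main step. The difficulty is that $g_2$ depends on $\om_1$ through $\F_2$, and $g_1$ depends on $\om_2$ through $\F_1$, so the two terms are not simply independent. I would introduce the auxiliary quantities
\[
h_i := \tr(\F - \F_{\mr\{1,2\}}) - \om_i^\top(\F - \F_{\mr\{1,2\}})\om_i, \qquad i=1,2,
\]
and write
\[
\Exp[g_1g_2] = \Exp[(g_1-h_1)(g_2-h_2)] + \Exp[h_1 g_2] + \Exp[g_1 h_2] - \Exp[h_1h_2].
\]
The last three terms should all vanish. For $\Exp[h_1 g_2]$, condition on $(\om_1, \Om_{\mr\{1,2\}})$: then $h_1$ is fixed, while $\F_2$ is fixed and independent of $\om_2$, so $\Exp[g_2\mid \om_1,\Om_{\mr\{1,2\}}]=0$. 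The terms $\Exp[g_1h_2]$ and $\Exp[h_1h_2]$ vanish by the symmetric argument, conditioning on $(\om_2,\Om_{\mr\{1,2\}})$.

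For the remaining term, Cauchy--Schwarz together with the symmetry between indices 1 and 2 gives $\Exp[(g_1-h_1)(g_2-h_2)] \le \Exp[(g_1-h_1)^2]$. Now
\[
g_1 - h_1 = \tr(\F_{\mr\{1,2\}} - \F_1) - \om_1^\top(\F_{\mr\{1,2\}}-\F_1)\om_1,
\]
and both $\F_1$ and $\F_{\mr\{1,2\}}$ are independent of $\om_1$. Conditioning on $\Om_{\mr 1}$ and applying \Cref{thm:HutchVar} once more yields $\Exp[(g_1-h_1)^2] = 2\Exp\|f(\AhOne)-f(\AhOneTwo)\|_F^2$. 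This is exactly the second term, with $\AhOne$ built from $\Om_{\mr 1}$ (which contains $\om_2$) and $\AhOneTwo$ built from $\Om$ with its first two columns removed.

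Combining the diagonal and cross-term bounds gives the stated inequality. The only technical points left are finiteness of the second moments and the conditional-independence bookkeeping, which is where I expect most of the care to be needed. Both should be routine because $0 \mleq f(\Ahi) \mleq f(\A)$, so every matrix involved is uniformly bounded in norm.
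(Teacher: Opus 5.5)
Your proposal is correct and follows essentially the same route as the paper: the exchangeable split into $\frac1k\Var[\trh_1]$ plus a covariance term, the law of total variance for the diagonal term, and, for the cross term, the same centering by $\Xh = f(\A) - f(\AhOneTwo)$ followed by Cauchy--Schwarz and one more application of \Cref{thm:HutchVar}. In fact your $g_1 - h_1$ is exactly the negative of the paper's $\trh_1 - \tr f(\A) + \tr\Xh - \om_1^\top\Xh\om_1$, and your explicit check that the three mixed terms vanish simply spells out the conditioning steps the paper performs in passing.
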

\begin{proof}
 See \Cref{ssec:ialgname_proofs}.   
\end{proof}
\begin{theorem}
    Consider an SPSD matrix $\A$ and let $f \in \mathscr{F}$. Then, the \algname (\Cref{alg:practical}) estimator satisfies,
    \[\MSE \left[\trh_{\rm FT}\right] \leq 2 \ \Exp\left[ \| f(\A) - f(\AhOne) \|_F^2 \right] +  2 \ \Exp \left[ \|f(\A) - f(\AhOne) \|_*^2 \right].\]
    \label{thm:var_FT}
\end{theorem}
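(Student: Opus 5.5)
We bound the MSE of $\trh_{\rm FT}$ by comparing it with the idealized estimator $\trh_{\rm iFT}$, which is unbiased by \Cref{thm:bias_iFT}. Writing $T := \tr(f(\A))$, we split
\[
\trh_{\rm FT} - T = \big(\trh_{\rm iFT} - T\big) - \frac1k\sum_{i=1}^k \om_i^\top\big(f(\A) - f(\Anys)\big)\om_i .
\]
Applying $(a+b)^2 \le 2a^2 + 2b^2$ then gives
\[
\MSE[\trh_{\rm FT}] \le 2\,\MSE[\trh_{\rm iFT}] + 2\,\Exp\Big[\Big(\tfrac1k\textstyle\sum_i \om_i^\top (f(\A)-f(\Anys))\om_i\Big)^2\Big].
\]
It is more economical not to invoke \Cref{thm:var_iFT} as a black box, because its second term involves $\AhOneTwo$. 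Instead, we define per-index quantities $X_i := \tr f(\Ahi) + \om_i^\top(f(\A)-f(\Ahi))\om_i - T$ and $Z_i := \om_i^\top(f(\A)-f(\Anys))\om_i$. Then $\trh_{\rm FT}-T = \frac1k\sum_i (X_i - Z_i)$, and by convexity of the square,
\[
\MSE[\trh_{\rm FT}] \le \frac1k \sum_i \Exp[(X_i-Z_i)^2] = \Exp[(X_1 - Z_1)^2]
\]
by exchangeability. This reduces everything to a single index, and we bound $\Exp[(X_1-Z_1)^2] \le 2\Exp[X_1^2] + 2\Exp[Z_1^2]$.

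For the first term, $\AhOne$ depends only on $\om_2,\dots,\om_k$, which are independent of $\om_1$. Conditioning on them, \Cref{thm:HutchVar} applied to the symmetric matrix $f(\A)-f(\AhOne)$ gives $\Exp[X_1^2 \mid \om_2,\dots,\om_k] = 2\|f(\A)-f(\AhOne)\|_F^2$. This matrix is SPSD because $\AhOne \mleq \A$ and $f$ is operator monotone. The factor $2$ needs handling: either refine the split above so this term enters only once, or accept a constant and tighten afterward. I would try first to write $X_1 - Z_1 = \big(\tr f(\AhOne) - T\big) + \om_1^\top (f(\Anys)-f(\AhOne))\om_1$, which is exactly the single-index summand of $\trh_{\rm FT}$ minus $T$.

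For this second form, we write $D := f(\A) - f(\AhOne) \succeq 0$ and $E := f(\A) - f(\Anys)$. Since $\AhOne \mleq \Anys \mleq \A$ (\Cref{lem:nysprops}, monotonicity of \Nystrom in the range of the test matrix), operator monotonicity gives $0 \mleq E \mleq D$. The summand error then equals $-(\tr D - \om_1^\top D\om_1) - \om_1^\top E \om_1$. The first piece has conditional second moment $2\|D\|_F^2$ by \Cref{thm:HutchVar}. For the second, $0 \le \om_1^\top E\om_1 \le \om_1^\top D \om_1$, and $\Exp[(\om_1^\top D\om_1)^2 \mid \cdot] = 2\|D\|_F^2 + (\tr D)^2 = 2\|D\|_F^2 + \|D\|_*^2$. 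Combining these with $(a+b)^2\le 2a^2+2b^2$ yields a bound of the form $c_1\Exp\|D\|_F^2 + c_2\Exp\|D\|_*^2$.

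The main obstacle is getting the constants exactly $2$ and $2$. The crude estimate above gives $8\|D\|_F^2 + 2\|D\|_*^2$. To sharpen it, I would expand the square exactly instead:
\[
\Exp\big[(\tr D - \om_1^\top D \om_1 + \om_1^\top E\om_1)^2\big] = 2\|D\|_F^2 + \Exp(\om_1^\top E\om_1)^2 + 2\,\Exp\big[(\tr D-\om_1^\top D\om_1)\,\om_1^\top E\om_1\big].
\]
The cross term is the delicate point, since $E$ depends on $\om_1$ through $\Anys$. I would bound it by Cauchy–Schwarz, or use $\om_1^\top E\om_1 \le \om_1^\top D\om_1$ together with the sign structure, aiming to absorb everything into $2\|D\|_F^2 + 2\|D\|_*^2$, possibly via $\|D\|_F \le \|D\|_*$. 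Taking the outer expectation over $\om_2,\dots,\om_k$ then gives the stated bound.
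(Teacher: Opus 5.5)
\textbf{There is a genuine gap: the step that fixes the constants is never carried out.}

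Your reduction to a single index is correct and matches the paper, which does it through its symmetrization proposition: convexity of the square plus identical distribution of the summands. Your derivation of $0 \mleq E \mleq D$ from the \Nystrom monotonicity property and operator monotonicity of $f$ is also correct. But the proposal stops at exactly the step that determines the result. What you actually establish is $8\,\Exp\|D\|_F^2 + 2\,\Exp\|D\|_*^2$. The cross term $2\,\Exp\bigl[(\tr D - \om_1^\top D\om_1)\,\om_1^\top E\om_1\bigr]$ is left open, and $E$ depends on $\om_1$ through $\Anys$.

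Two of the tools you list cannot close it:
\begin{itemize}
\item Cauchy--Schwarz gives strictly worse constants. For rank-one $D$ it yields $(5+2\sqrt6)\|D\|_*^2$ instead of $4\|D\|_*^2$.
\item The comparison $\|D\|_F \le \|D\|_*$ only moves weight from the Frobenius term to the nuclear term. It never lowers the total to $2$ and $2$.
\end{itemize}
Only your ``sign structure'' option can work, and it is not executed. As written, the stated inequality is not proved.

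\textbf{The missing idea.} Use the signs pointwise, before taking any expectation, so that the $\om_1$-dependent matrix $\Anys$ is eliminated. Regroup the error as $\tr D - \om_1^\top G\om_1$ with $G := D - E = f(\Anys) - f(\AhOne)$, so that $0 \mleq G \mleq D$. Both $\tr D$ and $\om_1^\top G\om_1$ are nonnegative, so the cross term is nonpositive and can be dropped:
\[
(\tr D - \om_1^\top G\om_1)^2 \le (\tr D)^2 + (\om_1^\top G\om_1)^2 \le \|D\|_*^2 + (\om_1^\top D\om_1)^2 .
\]
Now only $D$ appears, and it depends on $\om_2,\dots,\om_k$ alone. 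Conditioning on $\Om_{\mr 1}$ and using $\Exp[(\om_1^\top D\om_1)^2 \mid \Om_{\mr 1}] = 2\|D\|_F^2 + \|D\|_*^2$ gives exactly $2\,\Exp\|D\|_F^2 + 2\,\Exp\|D\|_*^2$. This is the paper's proof.

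\textbf{Your own route also closes if completed.} In your expansion, set $a := \tr D - \om_1^\top D\om_1$ and $e := \om_1^\top E\om_1$. The pointwise inequality $(\om_1^\top D\om_1)\,e \ge e^2$ gives $(a+e)^2 \le a^2 + 2(\tr D)\,e \le a^2 + 2(\tr D)\,\om_1^\top D\om_1$. Its conditional expectation given $\Om_{\mr 1}$ is again $2\|D\|_F^2 + 2\|D\|_*^2$. This computation is the heart of the proof and has to be written out, not left as a possibility.
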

\begin{proof}
 See \Cref{ssec:algname_proofs}.   
\end{proof}

Similar to the bound in \Cref{sssec:bias}, \Cref{thm:var_iFT,thm:var_FT} bound the variance of \ialgname and \algname in terms of the MSE of a rank-$(k\!-\!1)$ \Nystrom approximation. Observe, in particular, that the bound for the MSE of \algname is comparable to that of a \funNys trace approximation with one fewer matvec. 
\Cref{thm:var_iFT,thm:var_FT} bound the MSE of the estimators by terms involving the MSE of the fun\Nystrom approximation under the Frobenius and nuclear norms. This motivates us to derive a general unitarily invariant bound for fun\Nystrom. \Cref{thm:second_mom_bd_nys} provides a bound for this error under any unitarily invariant norm in terms of the trailing eigenvalues of $\A$ and the function $f$. 
\begin{theorem}
    Consider an SPSD matrix $\A$ and let $f \in \mathscr{F}$. For $k \geq 4$, we have
    \begin{align*}
        \Exp\left[\uinvnorm{f(\A)-f(\Anys)}^2\right]
        \leq 2\uinvnorm{f(\Lm_{k \mr 3:n})}^2 + \gamma(\Lm_{k \mr 3:n}, k) \uinvnorm{f(\Lm_{k \mr 3:n}^{1 / 2})}^2, 
        \intertext{where $\uinvnorm{\cdot}$ denotes any unitarily-invariant norm, and}
        \gamma(\D, k) := \frac{16(k-4)(k-2)}{3} \|\D\|_2 +  \frac{16e^4k^2}{125} (\|\D^{1/2}\|_*^2 + 2\|\D\|_*) + 2.
    \end{align*}
    \label{thm:second_mom_bd_nys}
\end{theorem}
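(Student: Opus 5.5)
We follow the structure of the fun\Nystrom analysis in \cite{PerssonKressner23}, working in the eigenbasis of $\A$. Split $\Om$ so that $\U^\top\Om = [\Om_1; \Om_2]$ with $\Om_1 \in \R^{(k-3)\times k}$ acting on the dominant eigenspace and $\Om_2$ on the tail. We target the rank $k-3$ because the bound involves $\Lm_{k\mr 3:n}$, and this leaves oversampling $p=3$. That value is what makes the second moment $\Exp\|\Om_1^\dagger\|^2$, or rather $\Exp\|\Om_1^\dagger\|_2^2$, finite, and it explains the hypothesis $k\ge 4$.

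The first step is a deterministic bound. Because $0 \mleq \Anys \mleq \A$ and $f$ is operator monotone, $0 \mleq f(\A)-f(\Anys)$. We then construct a comparison matrix $\Hm$ that satisfies $\Hm \mleq \Anys$; the natural choice is the \Nystrom approximation with the restricted test matrix, projected onto the range of $\A^{1/2}\Om$. This gives $0\mleq f(\A)-f(\Anys)\mleq f(\A)-f(\Hm)$. For PSD matrices a Loewner ordering implies the same ordering in any unitarily invariant norm, so $\uinvnorm{f(\A)-f(\Anys)}\le\uinvnorm{f(\A)-f(\Hm)}$.

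Next we bound $f(\A)-f(\Hm)$ by a block argument. Write $\A = \A_1\oplus\A_2$, where $\A_1$ holds the top $k-3$ eigenvalues and $\A_2$ the rest. Let $\mathbf{F} = \Om_2\Om_1^\dagger$, with $\Om_1$ full row rank almost surely. Standard \Nystrom structural arguments, as in Tropp--Webber, give $\A - \Hm \mleq \A^{1/2}(\I-\mathbf{P})\A^{1/2}$ for a projector $\mathbf{P}$. Following \cite{PerssonKressner23}, this yields a decomposition
\[
f(\A)-f(\Hm) \mleq \text{(tail term } f(\A_2)\text{ padded)} + \text{(cross term controlled by } \Lm_2^{1/2}\mathbf{F}\text{)}.
\]
To bound the cross term, we use the subadditivity and concavity of $f$ (every $f\in\mathscr F$ is concave, and operator monotone functions satisfy $\uinvnorm{f(\X+\Y)}\le\uinvnorm{f(\X)}+\uinvnorm{f(\Y)}$ by the Bourin--Uchiyama inequality). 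This gives the estimate
\[
\uinvnorm{f(\A)-f(\Anys)} \le \uinvnorm{f(\Lm_2)} + \uinvnorm{f(\Lm_2^{1/2})}\,g(\mathbf{F},\Lm_2),
\]
where $g$ is a scalar factor of the form $(1+\|\Lm_2^{1/2}\mathbf{F}\|_2^2)^{1/2}$. The appearance of $f(\Lm_2^{1/2})$ comes from the inequality $f(ab)\le \max(1,a)\,f(b)$ type scalings, using $f(tx)\le t f(x)$ for $t\ge1$, which follows from concavity together with $f(0)=0$.

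Finally, we take second moments. Squaring and applying $(a+b)^2\le 2a^2+2b^2$ produces the leading coefficient $2\uinvnorm{f(\Lm_{k\mr 3:n})}^2$, and the remaining task is to bound $\Exp[g^2]$ by $\gamma$. We condition on $\Om_1$ and use the Gaussianity of $\Om_2$. The expectation of $\|\Lm_2^{1/2}\Om_2\Om_1^\dagger\|_2^2$ then splits via Gordon-type bounds into two pieces: a term of order $\|\Lm_2\|_2\,\Exp\|\Om_1^\dagger\|_F^2$, which gives the factor $(k-4)(k-2)$ type constants, and a term of order $(\|\Lm_2^{1/2}\|_*^2 + \|\Lm_2\|_*)\,\Exp\|\Om_1^\dagger\|_2^2$. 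For the latter we use the tail bounds on $\|\Om_1^\dagger\|_2$ from Halko--Martinsson--Tropp with $p=3$, which yield constants like $e^4k^2/125$. The main obstacle is this probabilistic step, specifically controlling the second moment of the spectral norm (rather than the Frobenius norm) of the Gaussian pseudoinverse with only three oversampling columns, and tracking the constants so that they match $\gamma$. The structural inequality that passes from Loewner bounds to the $f(\Lm^{1/2})$ factor is the second delicate point.
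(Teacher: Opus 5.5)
There is a genuine gap at the step you flagged as ``the second delicate point''. Your structural factor $g=(1+\|\Lm_2^{1/2}\mathbf F\|_2^2)^{1/2}$ is wrong, and the probabilistic step built on it fails with it.

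\textbf{The correct structural factor.} Set $\mathbf F=\Om_2\Om_1^\dagger$. The matrix $\mathbf F^\top\Lm_2\mathbf F$ has the same nonzero eigenvalues as $\Lm_2^{1/2}\mathbf F\mathbf F^\top\Lm_2^{1/2}$, and $\Lm_2^{1/2}\mathbf F\mathbf F^\top\Lm_2^{1/2}\mleq\|\Lm_2^{1/4}\mathbf F\|_2^2\,\Lm_2^{1/2}$. Operator monotonicity, $f(0)=0$ and $f(tx)\le\max\{1,t\}f(x)$ then give
$\uinvnorm{f(\mathbf F^\top\Lm_2\mathbf F)}\le\max\{1,\|\Lm_2^{1/4}\mathbf F\|_2^2\}\,\uinvnorm{f(\Lm_2^{1/2})}$.
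This factor is \emph{quadratic} in the Gaussian matrix, while your $g$ grows only linearly in $\mathbf F$.

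\textbf{Your estimate is false.} Take $n=2$, $k=r=1$, $f(x)=x$, the nuclear norm, and $\A=\diag(1,\epsilon)$. Use a test vector with $t=(\omega_2/\omega_1)^2=1/\epsilon$. Then $\tr(\A-\Anys)=\epsilon(1+t)/(1+\epsilon t)=(1+\epsilon)/2$. Your bound gives $\epsilon+\sqrt{2\epsilon}$, and letting $\epsilon\to0$ shows that no constant in front of $g$ rescues it. No cleverer inequality can fix this either: for $f(x)=x$ the cross term $\tr(\mathbf F^\top\Lm_2\mathbf F)$ is itself quadratic in $\Om_2$, so its square is quartic.

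\textbf{Fourth moments and oversampling.} You therefore need the fourth moment $\Exp\|\Lm_2^{1/4}\Om_2\Om_1^\dagger\|_2^4$, not a second moment. Condition on $\Om_1$ and apply the Tropp--Webber Gaussian product bound with $\Sm=\Lm_2^{1/4}$. Up to the factor $8$, this produces two terms:
\begin{itemize}
\item $\|\Sm\|_2^4=\|\Lm_2\|_2$ times $\Exp[\|\Om_1^\dagger\|_F^4+2\|\Om_1^\dagger\|_{(4)}^4]=r(r+2)/((\rho-1)(\rho-3))$;
\item $\|\Sm\|_F^4+2\|\Sm\|_{(4)}^4=\|\Lm_2^{1/2}\|_*^2+2\|\Lm_2\|_*$ times $\Exp\|\Om_1^\dagger\|_2^4\le e^4k^2/((\rho+1)^3(\rho-3))$.
\end{itemize}
This is exactly the shape of $\gamma$. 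A second-moment computation would instead give $r/(\rho-1)$, $\|\Lm_2\|_*$ (never $\|\Lm_2^{1/2}\|_*^2$) and $e^2k$, so it cannot reach the stated bound.

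These fourth moments are finite only for $\rho\ge4$, so the right choice is $r=k-4$, $\rho=4$. Then the constants become $16(k-4)(k-2)/3$ and $16e^4k^2/125$, whereas your $\rho=3$ makes them infinite. Your indexing is also off by one: $\Om_1\in\R^{(k-3)\times k}$ puts the tail at $\Lm_{k-2:n}$, while $\Lm_{k-3:n}$ is the tail of a $(k-4)$-partition.

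\textbf{What is fine.} The rest of your outline is sound: comparison with a sub-\Nystrom matrix $\mleq\Anys$, the resulting norm ordering, and $(a+b)^2\le 2a^2+2b^2$. The paper uses the truncation \AnysR together with the Persson--Kressner structural lemma, which plays the same role as your comparison matrix.
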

\begin{proof}
See \Cref{ssec:app_prob_bd}
\end{proof}

Note that \Cref{thm:second_mom_bd_nys} has implications beyond trace estimation. Although \cite{PerssonKressner23} provides theoretical bounds on the approximation error of fun\Nystrom under different norms, the result in \Cref{thm:second_mom_bd_nys} is more generally applicable for the following reasons. First, the bound is applicable to any unitarily invariant norm, including the nuclear and Frobenius norms. Second, to our knowledge, \Cref{thm:second_mom_bd_nys} provides the probabilistic bounds for the squared norm error of the fun\Nystrom algorithm without assuming subspace iteration was done as in \cite[Theorem 3.4]{PerssonKressner23}.

We bound the MSEs of \algname and \ialgname in terms of the matrix function $f$ and the trailing eigenvalues of $\A$ in \Cref{thm:mse}. The MSE quantifies the expected squared error of the trace estimator and therefore assesses the overall quality of the estimator in a way that incorporates both its bias and variance.
\begin{theorem}
Consider an SPSD matrix $\A$ and let $f \in \mathscr{F}$. The MSE of the estimators $\trh_{\rm iFT}$, $\trh_{\rm FT}$, and $\trh_{\rm FN}$ are bounded as follows:
     \begin{align*}
        \MSE[\trh_{\rm iFT}] &\leq 
    \frac{k + 1}{k} \left(4\|f(\Lm_{k\mr 5:n})\|_F^2 + 2\gamma(\Lm_{k \mr 5:n}, k-2) \|f(\Lm_{k\mr 5:n}^{1 / 2})\|_F^2 \right), 
    \intertext{\hfill for $k \geq 6$; }
         \MSE[\trh_{\rm FT}] &\leq 4\left( \|f(\Lm_{k\mr 4:n})\|_F^2 + \|f(\Lm_{k\mr 4:n})\|_*^2 \right) \\
         &\qquad+ 2\gamma(\Lm_{k \mr 4:n}, k\!-\!1) \left( \|f(\Lm_{k\mr 4:n}^{1 / 2})\|_F^2 + \|f(\Lm_{k\mr 4:n}^{1 / 2})\|_*^2 \right),
    \intertext{\hfill for $k \geq 5$; and }
         \MSE[\trh_{\rm FN}] &\leq 2\|f(\Lm_{k\mr 3:n})\|_*^2 + \gamma(\Lm_{k \mr 3:n}, k) \|f(\Lm_{k\mr 3:n}^{1 / 2})\|_*^2,
     \end{align*}
     \hfill for $k \geq 4$,\\
     where $\gamma$ is as defined in \Cref{thm:second_mom_bd_nys}.
\label{thm:mse}
\end{theorem}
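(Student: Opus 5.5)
The plan is to combine the structural bounds of \Cref{thm:var_iFT,thm:var_FT} with the fun\Nystrom error bound of \Cref{thm:second_mom_bd_nys}, applied with the appropriate reduced number of columns. The key observation is that $\AhOne$ is a \Nystrom approximation built from a Gaussian test matrix with $k-1$ columns. Likewise, $\AhOneTwo$ uses $k-2$ columns. Applying \Cref{thm:second_mom_bd_nys} with $k$ replaced by $k-1$ (valid for $k-1\ge 4$) gives, for any unitarily invariant norm,
\[
\Exp\big[\uinvnorm{f(\A)-f(\AhOne)}^2\big] \le 2\uinvnorm{f(\Lm_{k\mr 4:n})}^2 + \gamma(\Lm_{k\mr 4:n},k-1)\uinvnorm{f(\Lm_{k\mr 4:n}^{1/2})}^2 .
\]
Replacing $k$ by $k-2$ gives the analogous bound with $\Lm_{k\mr 5:n}$ and $\gamma(\cdot,k-2)$.

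\textbf{The \algname bound.} Plug the Frobenius and nuclear instances of the $(k-1)$ bound into \Cref{thm:var_FT}. The factor $2$ in \Cref{thm:var_FT} multiplies each term, which produces exactly
\[
4\big(\|f(\Lm_{k\mr 4:n})\|_F^2+\|f(\Lm_{k\mr 4:n})\|_*^2\big)+2\gamma(\Lm_{k\mr 4:n},k-1)\big(\|\cdot\|_F^2+\|\cdot\|_*^2\big),
\]
valid for $k\ge5$.

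\textbf{The \funNys bound.} Since $\tr(f(\A))-\trh_{\rm FN}=\tr(f(\A)-f(\Anys))$, and $f(\A)\succeq f(\Anys)$ by operator monotonicity together with $\Anys\preceq\A$, the trace is nonnegative and equals the nuclear norm. Hence the MSE equals $\Exp\|f(\A)-f(\Anys)\|_*^2$. Applying \Cref{thm:second_mom_bd_nys} directly with the nuclear norm gives the claim for $k\ge4$.

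\textbf{The \ialgname bound.} This is the step needing care. \Cref{thm:var_iFT} has two terms. The first is $\frac2k\Exp\|f(\A)-f(\AhOne)\|_F^2$. The second, $2\Exp\|f(\AhOne)-f(\AhOneTwo)\|_F^2$, must be reduced to errors against $f(\A)$. The triangle inequality would give a factor $4$, which the target does not show. Instead, I would use the Loewner ordering $\AhOneTwo\preceq\AhOne\preceq\A$: the \Nystrom approximation is monotone in the range of the sketch, since $\AhOneTwo$ is also the \Nystrom approximation of $\AhOne$ with the smaller sketch. Operator monotonicity then gives
\[
0\preceq f(\AhOne)-f(\AhOneTwo)\preceq f(\A)-f(\AhOneTwo).
\]
For PSD matrices $0\preceq\B\preceq\C$ we have $\|\B\|_F\le\|\C\|_F$, so the second term is at most $2\Exp\|f(\A)-f(\AhOneTwo)\|_F^2$. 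For the first term, $\AhOne\succeq\AhOneTwo$ gives $\|f(\A)-f(\AhOne)\|_F\le\|f(\A)-f(\AhOneTwo)\|_F$ by the same monotonicity. Both terms are therefore bounded by multiples of $\Exp\|f(\A)-f(\AhOneTwo)\|_F^2$, with total coefficient $\frac2k+2=\frac{2(k+1)}{k}$. Applying \Cref{thm:second_mom_bd_nys} with $k-2$ columns (requiring $k\ge6$) yields
\[
\frac{k+1}{k}\Big(4\|f(\Lm_{k\mr 5:n})\|_F^2+2\gamma(\Lm_{k\mr 5:n},k-2)\|f(\Lm_{k\mr 5:n}^{1/2})\|_F^2\Big),
\]
as claimed.

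\textbf{Main obstacle.} The crux is justifying the nesting $\AhOneTwo\preceq\AhOne$, namely that \Nystrom approximations are monotone under enlarging the sketch. This is presumably among the properties collected in the paper's lemma on \Nystrom properties, and can be seen as an orthogonal projection $\A^{1/2}P\A^{1/2}$ onto nested ranges. Once this is in place, everything else is bookkeeping of constants and index shifts.
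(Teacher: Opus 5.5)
Your proposal is correct and is the paper's argument: substitute \Cref{thm:second_mom_bd_nys} (with $k-1$, $k-2$, and $k$ columns, respectively) into \Cref{thm:var_FT}, into \Cref{thm:var_iFT}, and into the identity $\MSE[\trh_{\rm FN}] = \Exp\|f(\A)-f(\Anys)\|_*^2$. The paper states this in one line. Your Loewner-order step bounding both terms of \Cref{thm:var_iFT} by $\Exp\|f(\A)-f(\AhOneTwo)\|_F^2$ is exactly the detail it leaves implicit. That step uses $\AhOneTwo \mleq \AhOne \mleq \A$, i.e., \nysprop{item:nys_mono} applied to $\Om_{\mr 1}$, and it is what produces the factor $2 + 2/k = 2(k+1)/k$.
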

\begin{proof} 
    See \Cref{ssec:algname_proofs}.
\end{proof}

Observe in \Cref{thm:mse} that bounds for the root mean squared errors (RMSE) of both estimators are dominated by a factor of $k\|\Lm_{k\mr 5:n}^{1 / 2}\|_*\|f(\Lm_{k\mr 5:n}^{1 / 2})\|_*$. This suggests that the proposed algorithms perform better for matrices with fast eigenvalue decay, as expected.

\subsubsection{Asymptotic behavior of theoretical bounds}
\label{ssec:asymptotic}
To provide further insight, we highlight the asymptotic behavior of the bias and MSE of \algname (\Cref{thm:bias_FT,thm:mse}) in \Cref{tab:asympt_var} for a matrix with exponentially decaying eigenvalues of $\lambda_i \leq \alpha^i$ for $1 \leq i \leq n$, where $0 < \alpha < 1$, with respect to the number of matvecs $k$. For all choices of matrix functions analyzed, we observe exponential decay of the bias and variance. For example, for the function $\log(1+x)$, every additional matvec reduces the expected bias and RMSE by a factor of $\alpha$ asymptotically.

\begin{table}[ht]
    \centering
    \begin{tabular}{ccc}
        \toprule
        Matrix function ($f$) & Bias bound & MSE bound \\ 
        \midrule
        $x$ & 0 & $\mathcal{O}(k^2\alpha^{2k})$ \\ 
        $\log(1+x),\ \frac{x}{x+\zeta}$ & $\mathcal{O}(k\alpha^{k})$ & $\mathcal{O}(k^2\alpha^{2k})$ \\ 
        $\sqrt{x}$ & $\mathcal{O}(k\alpha^{k / 2})$ & $\mathcal{O}(\alpha^{k})$ \\ 
        \bottomrule
    \end{tabular}
    \caption{Asymptotic behavior of \algname estimator for $\lambda_i \leq \alpha^i$, where $0 < \alpha < 1$.}
    \label{tab:asympt_var}
\end{table}
We point out that our variance bounds for the case $f(x) = x$ exhibit the same asymptotic decay as the trace estimation bounds for \alg{XNysTrace} as derived by \cite{Epperly2023XTraceMT}. For the other functions in \Cref{tab:asympt_var}, \funNys and \ialgname also asymptotically adhere to the bias and MSE bounds displayed therein.

\subsection{Efficient and stable implementation of \algname}
\label{ssec:efficient}
Although the predominant cost of implicit trace estimation methods is often dominated by matvecs with $\A$, the computational scalability of the subsequent matrix computations cannot be neglected in some applications. In this section, we explore techniques to improve the computational efficiency and provide a faster and numerically stable implementation of \algname.

A key bottleneck of \Cref{alg:practical} is the need to compute $f(\Ahi)$ $k$ times. If done na\"ively, these operations can consume up to an additional $\mc{O}(nk^3 + k^4)$ floating point operations (flops), beyond the \Nystrom approximation itself. To avoid this, we make use of the following facts:\\
\begin{enumerate}[label=(\alph*)]
    \item If $\rank(\Anys) < k$, then $\A = \Anys$ almost surely for random $\Om$ \cite{PerssonKressner23}.\\ 
    \item If $\rank(\Anys) = k$, then
    \begin{equation}
        \Ahi = \Anys - \Y \left( \frac{(\Om^\top \Y)^{-1}\vec{e}_i\vec{e}_i^\top (\Om^\top \Y)^{-1}}{\vec{e}_i^\top (\Om^\top \Y)^{-1} \vec{e}_i} \right)\Y^\top,
        \label{eq:rankOneUpdate}
    \end{equation}
    where $\Y = \A\Om$ for $1 \leq i \leq k$ \cite[Theorem 14.2]{EpperlyMake2025}. \\ 
\end{enumerate}

By property (a), $\tr[f(\Anys)]$ is exact (almost surely) if $\rank(\Anys) < k$, in which case, no additional computations are needed. If $\rank(\Anys) = k$, we take advantage of property (b). Namely, using notation from \Cref{alg:Nystrom}, we express $\Anys = \Uh\Lmh\Uh^\top$, $(\Om^\top \Y)^{-1} = \C^{-1}\C^{-\top}$, and $\Y = \Q\Rm = \Q(\Ut\Ut^\top)\Rm = \Uh\Ut^\top\Rm$. Substituting these quantities into \Cref{eq:rankOneUpdate} yields
\begin{equation}
    \Ahi = \Uh(\Lmh - \vec{b}_i\vec{b}_i^\top)\Uh^\top, \quad \textrm{where }\vec{b}_i := \frac{\Ut^\top\Rm \C^{-\top}\vec{e}_i}{\|\C^{-\top}\vec{e}_i\|_2}.
    \label{eq:rank_one_b}
\end{equation}
Therefore, we have $f(\Ahi) = \Uh f(\Lmh - \vec{b}_i\vec{b}_i^\top)\Uh^\top$ for $1 \leq i \leq k$.
A key observation is that to enable the use of $f(\Ahi)$, we only need to compute the application of $f$ to $\Lmh - \vec{b}_i\vec{b}_i^\top$, which is a $k\times k$ diagonal-plus-rank-one (DPR1) matrix. We make use of highly efficient techniques to compute the eigendecomposition of DPR1 matrices in $\mc{O}(k^2)$ flops \cite{Bunch78DPR1, Gu94DPR1}, compared to the standard $\mc{O}(k^3)$ flops for dense matrices. Once the eigendecomposition $\Lmh - \vec{b}_i\vec{b}_i^\top = \V_i\Lmh_i\V_i^\top$ is obtained for $1 \leq i \leq k$, the remaining steps for \algname naturally follow:
\begin{align*}
    \trh_{FT}
    &= \frac{1}{k}\sum_{i=1}^{k} \tr[f(\Ahi)] + \om_i^\top (f(\Anys) - f(\Ahi))\om_i \\
    &= \frac{1}{k}\sum_{i=1}^{k} \tr[f(\Lmh_i)] + \vec{x}_i^\top(f(\Lmh_i) - \V_if(\Lmh_i)\V_i^\top) \vec{x}_i,
\end{align*}
where $\vec{x}_i := \Uh^\top \om_i$ for $1 \leq i \leq k$. Note that this process entirely avoided the explicit computation of $f(\Ahi)$ for $1 \leq i \leq k$. 

We thus present the accelerated \algname algorithm in \Cref{alg:fast_version}.
A few comments regarding \Cref{alg:fast_version} are in order:
\begin{itemize}
    \item Line 3: Computing $\rank(\Anys)$ must be treated carefully in finite-precision arithmetic. In our implementation, we utilize the numerical rank $\hat{r}$ obtained from the truncated pivoted Cholesky in \alg{NysSVD} (\Cref{alg:Nystrom}). The numerical rank is also more conservative, since it avoids additional computations when $\Om^\top \Y$ is ill-conditioned.
    \item Line 7: Note that $\X = \Uh^\top \Om$ is simplified as $\X = \Ut^\top \Z^{-\!\top} \mathbf{C}$, by substituting $\Uh = \Q\Ut = \Y\Rm^{-1}\Ut$ and $\Z = \mathbf{R}\mathbf{C}^{-1}$.
    \item Line 9: The DPR1 eigendecomposition can be done in $\mc{O}(k^2)$ flops; e.g., see \cite[Algorithm 1]{Stor15DPR1} or the LAPACK routine \texttt{dlaed9} \cite{LAPACK}.
\end{itemize}
~
\begin{algorithm}[ht]
    \caption{Efficient version of \algname($\A, k$)}
    \begin{algorithmic}[1]
        \itemsep0.25em
        \Require SPSD matrix $\A \in \R^{n \times n}$, matvec budget $k$, $f \in \mathscr{F}$
        \Ensure Trace approximation $\trh_{\mathrm{FT}}$
        \State Generate matrix $\Om \in \R^{n \times k}$ with i.i.d standard normal entries
        \State Obtain $[\Lmh, \Uh, \mat{C}, \Ut, \Z, \hat{r}]$ = $\alg{NysSVD}(\A, \Om)$ 
        \If {$\hat{r} < k$} %
            \State Return $\trh_{\mathrm{FT}} = \tr\left[ f(\Lmh) \right]$
        \EndIf \vspace{0.2in}
        \State Define $\mat{S} \in \R^{k \times k}$ column-wise as $\vec{s}_i = \mat{C}^{-\top}\vec{e}_i/\|\mat{C}^{-\top}\vec{e}_i\|_2$ for $1 \leq i \leq k$
        \State Compute $\B = \Ut^\top \Z \mat{S}$ and $\X = \Ut^\top \Z^{-\!\top} \mat{C}$ 
        \For{$i = 1, \ldots, k$}
            \State Compute eigendecomposition, $\Lmh - \vec{b}_i\vec{b}_i^\top = \V_i\Lmh_i\V_i^\top$ %
            \State Compute $\trh_i := \tr\left[f(\Lmh_i)\right] + \vec{x}_i^\top f(\Lmh) \vec{x}_i - \vec{x}_i^\top \V_i f(\Lmh_i) \V_i^\top \vec{x}_i$
        \EndFor
        \State Compute $\trh_{\mathrm{FT}} := \frac{1}{k}\sum_{i=1}^k \trh_i$
    \end{algorithmic}
    \label{alg:fast_version}
\end{algorithm}

We compare the computational costs of the na\"ive and accelerated implementations of \algname in \Cref{tab:comp_cost}.
\begin{table}[ht]
    \centering
    \begin{tabular}{cccc} \toprule
        \textbf{Method} & \textbf{Matvecs with $\A$} & \textbf{Cost to compute $f(\Anys)$} & \textbf{Additional cost} \\\midrule 
        \Cref{alg:practical} & $k$ & $\mc{O}(k^2(n+k))$ & $\mc{O}(nk^3 + k^4)$ \\
        \Cref{alg:fast_version} & $k$ & $\mc{O}(k^2(n+k))$ & $\mc{O}(k^3)$ \\\bottomrule
    \end{tabular}
    \caption{Computational cost of \algname with \Cref{alg:practical,alg:fast_version} (flops)}
    \label{tab:comp_cost}
\end{table}
It is also worthwhile to note that the accelerated implementation (specifically, lines 6-12 in \Cref{alg:fast_version}) allows for parallelization in three different ways. First, the matrix sketch can be computed in parallel for different random vectors. Second, $\trh_i$ can be computed in parallel for $1 \leq i \leq k$. Third, the DPR1 structure (line 9, \Cref{alg:fast_version}) allows for the computation of individual eigenpairs independently, enabling further in-loop parallelization and storage savings. 
The accelerated implementation, \Cref{alg:fast_version}, is also numerically stable in practice as it avoids the explicit computation of $(\Om^\top \Y)^{-1}$.

\section{Numerical results}
\label{sec:num_res}
In this section, we demonstrate the performance of \algname via several numerical experiments. In \Cref{ssec:synth_mats}, we apply \algname to a suite of synthetic matrices to analyze its effectiveness. This is followed by an application of \algname to nuclear norm estimation in \Cref{ssec:nuclear_norm}, Bayesian inverse problems in \Cref{ssec:bayes}, and kernel methods in \Cref{ssec:kernel}. 

\subsection{Application to synthetic matrices}
\label{ssec:synth_mats}
In this section, we compare \algname (\Cref{alg:fast_version}) with existing approaches on synthetic matrices of size $1000\times 1000$ each. To this end, consider matrices of the form $\A = \U\Lm\U^\top$ with the following eigenvalue choices (for $1 \leq i \leq 1000$):
\begin{itemize}
    \item \texttt{Flat}: $\lambda_i = 3 - 2\frac{i-1}{n-1}$
    \item \texttt{Poly}: $\lambda_i = i^{-2}$
    \item \texttt{Exp}: $\lambda_i = 0.9^{i-1}$
    \item \texttt{Step}: $\lambda_i = \{1: i \leq 50, \ 10^{-3} : i \geq 51\}$
\end{itemize}
In \Cref{sssec:funNysCompare}, we compare the performance of \algname to \funNys on these representative matrices. Subsequently, we analyze \algname in comparison to multi-pass trace estimation techniques in \Cref{sssec:multi_pass}. Unless otherwise stated, all errors displayed in this section are averaged over $100$ independent trials and shaded regions represent the $90\%$ interpercentile range. 
\subsubsection{Comparison to \funNys}
\label{sssec:funNysCompare}
In this section, we compare the performance of the proposed \algname estimator with the \funNys approach on the synthetic matrices. The \funNys serves as a useful baseline for the proposed approach, as both methods are single-pass estimators for $\tr(f(\A))$. We perform two experiments on these synthetic matrices. First, we fix the matrix function and compare the accuracy of the trace estimators on the aforementioned matrices. Second, we compare the estimators for different matrix functions $f \in \mathscr{F}$, for a chosen matrix.

In \Cref{fig:log_synth_mats},  we display the relative error of \algname compared to \funNys for estimating $\tr(f(\A))$, where $f(\A) := \log(\I + \A)$ for the matrices with different spectral profiles. Across all four spectral profiles, we observe that \algname consistently outperforms the baseline \funNys by approximately one to two orders of magnitude in terms of average relative error. In the \texttt{Exp} case (\Cref{subfig:exp_log}), both methods exhibit linear convergence on a log-linear scale, adhering with the theory in \Cref{ssec:asymptotic}. However, it is still worth noting that \algname maintains a consistent edge over \funNys. The performance advantage of \algname compared to \funNys is most visible in the \texttt{Step} and \texttt{Poly} matrices, where \algname outperforms \funNys by several orders of magnitude. This suggests that \algname has a better tendency to capture small trailing eigenvalues (long tails) than \funNys. For the \texttt{Flat} matrix, where the eigenvalue decay is minimal, both methods exhibit little performance improvement with additional matvecs. Nevertheless, \algname consistently maintains a lower relative error compared to \funNys.

\Cref{fig:log_synth_mats} also displays shaded uncertainty regions for both \funNys and \algname. Since the y-axis is log-scaled, the shaded regions for \funNys appear to be much smaller than those for \algname. However, with the exception of \Cref{subfig:flat_log}, the interpercentile regions of \algname and \funNys are of the same magnitude for all the synthetic examples shown. In \Cref{subfig:poly_log}, for example, the interpercentile region for \algname is $15\%$ smaller than that of \funNys on average over the matvec budget. In the case of \texttt{Flat}, however, we do observe that \algname exhibits higher variance than \funNys when the spectrum of $\A$. This matches our theoretical expectations; for example, if $\A = \I$ and $f(x) = x$, \funNys always returns the inaccurate estimate $\trh_{FN} = k$ with zero variance, whereas \algname produces an unbiased estimate of the trace albeit with higher variance.
These results empirically validate the efficiency of \Cref{alg:fast_version} in capturing trace information more effectively than \funNys across a variety of spectral regimes.
\begin{figure}[ht]
    \centering
    \begin{subfigure}[b]{0.35\linewidth}
        \centering
        \includegraphics[width=\linewidth]{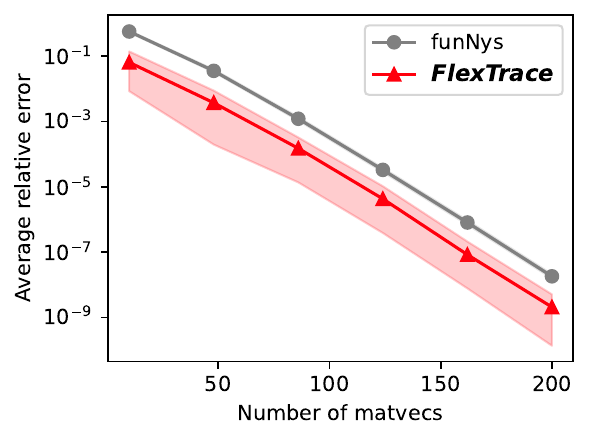}
        \caption{\texttt{Exp} matrix}
        \label{subfig:exp_log}
    \end{subfigure}
    \begin{subfigure}[b]{0.35\linewidth}
        \centering
        \includegraphics[width=\linewidth]{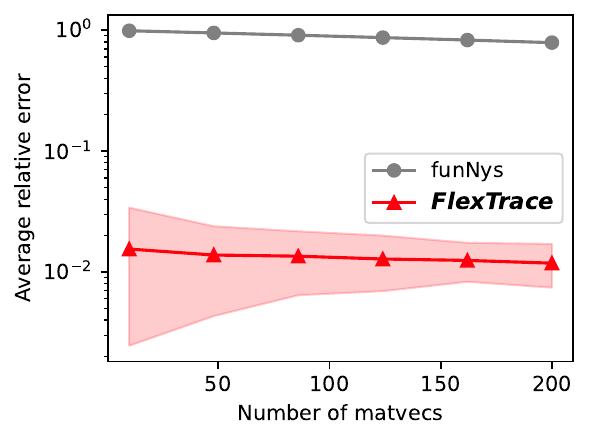}
        \caption{\texttt{Flat} matrix}
        \label{subfig:flat_log}
    \end{subfigure}
    \begin{subfigure}[b]{0.35\linewidth}
        \centering
        \includegraphics[width=\linewidth]{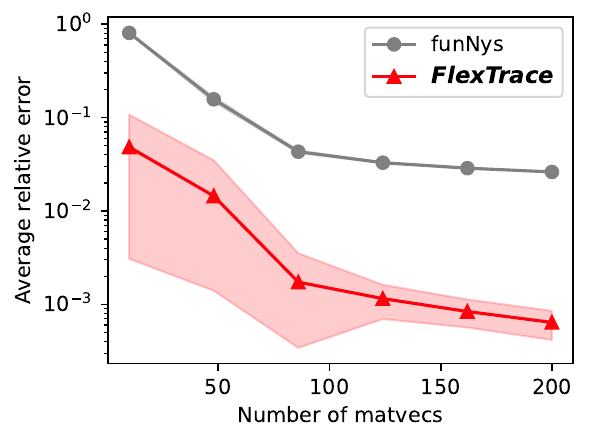}
        \caption{\texttt{Step} matrix}
        \label{subfig:step_log}
    \end{subfigure}
    \begin{subfigure}[b]{0.35\linewidth}
        \centering
        \includegraphics[width=\linewidth]{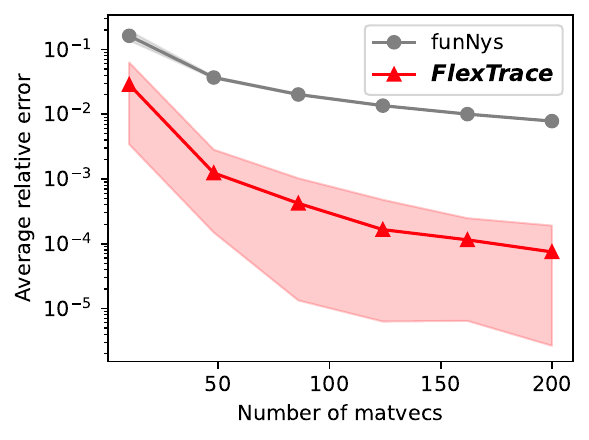}
        \caption{\texttt{Poly} matrix}
        \label{subfig:poly_log}
    \end{subfigure}
    \caption{Demonstration of \algname (\Cref{alg:fast_version}) on synthetic test matrices.}
    \label{fig:log_synth_mats}
\end{figure}

In \Cref{fig:poly_synth_mats}, we compare the performance of \algname to \funNys on a fixed matrix \texttt{Poly} but on several functions  $ x,\ x/(1+x),\log(1+x)$, and $\sqrt{x}$. We observe in \Cref{fig:poly_synth_mats} that \algname consistently outperforms \funNys across various operator monotone matrix functions. For the functions $ x,\ x/(1+x),$ and $\log(1+x)$, we observe that \algname has a lower relative error than \funNys by more than an order of magnitude, with the former achieving $10^{-4}$ relative accuracy with only $200$ matvecs. On the other hand, we note that both methods perform poorly for $f(x) = \sqrt{x}$, since even though $\A$ itself exhibits fast spectral decay, $f(\A) = \A^{1/2}$ does not. In comparison, for the other three functions, the spectral decay of $f(\A)$ is comparable to the spectral decay of $\A$.

\begin{figure}[ht]
    \centering
    \begin{subfigure}[b]{0.35\linewidth}
        \centering
        \includegraphics[width=\linewidth]{synthetic_new/flextrace_poly_log.pdf}
        \caption{$f(x) = \log(1+x)$}
        \label{subfig:log_poly}
    \end{subfigure}
    \begin{subfigure}[b]{0.35\linewidth}
        \centering
        \includegraphics[width=\linewidth]{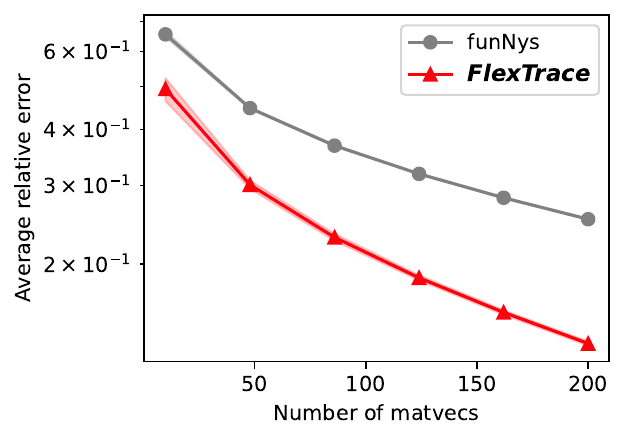}
        \caption{$f(x) = \sqrt{x}$}
        \label{subfig:sqrt_poly}
    \end{subfigure}
    \begin{subfigure}[b]{0.35\linewidth}
        \centering
        \includegraphics[width=\linewidth]{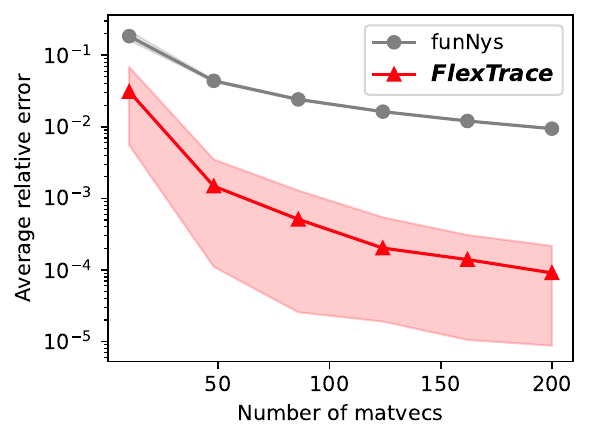}
        \caption{$f(x) = x/(x+1)$}
        \label{subfig:effdim_poly}
    \end{subfigure}
    \begin{subfigure}[b]{0.35\linewidth}
        \centering
        \includegraphics[width=\linewidth]{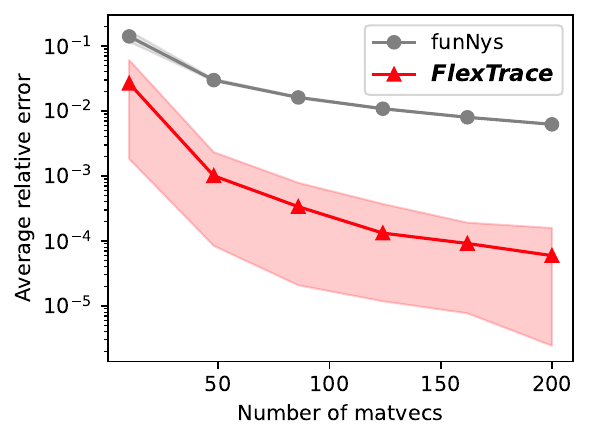}
        \caption{$f(x)=x$}
        \label{subfig:trace_poly}
    \end{subfigure}
    \caption{Demonstration of \algname (\Cref{alg:fast_version}) on the \texttt{Poly} matrix.}
    \label{fig:poly_synth_mats}
\end{figure}

\subsubsection{Comparison to multi-pass techniques} 
\label{sssec:multi_pass}
Although \algname is designed to be a single-pass method, it is useful to compare \algname (\Cref{alg:fast_version}) to multi-pass approaches to gain additional insight. In particular, we consider the following multi-pass methods for trace estimation of matrix functions:
\begin{enumerate}
    \item[(i)] \funNystrompp : A hybrid approach proposed by \cite{PerssonKressner23} to estimate $\tr(f(\A))$. This method uses $f(\Anys)$ to approximate $f(\A)$ and estimates the trace of the residual matrix using the Hutchinson method.
    \item[(ii)] \alg{SLQ}: Stochastic Lanczos Quadrature, as advanced by \cite{UbaruSLQ2017}, that integrates Gauss quadrature with Hutchinson's trace estimation. 
    \item[(iii)] \alg{KA-STE}: Krylov-aware stochastic trace estimation, as proposed by \cite{ChenHallman23}, that exploits properties of the Krylov subspace to enable higher efficiency for computing trace of matrix functions.
\end{enumerate}
Observe that methods (i)-(iii) involve approximating matvecs with $f(\A)$ using the Lanczos method. Unlike \algname, these techniques require applying $\A$ in a serial or blockwise serial manner, and often require computationally expensive procedures such as re-orthogonalization for numerical stability. A comparison between \algname and the methods outlined above is further complicated due to the fact that methods (i)-(iii) require the specification of several additional hyperparameters (block sizes, Lanczos iterations, allocation of matvecs, etc.). We attempt to provide a fair comparison by adhering to the hyperparameter selection strategies advised for \alg{KA-STE}, \funNystrompp, and \alg{SLQ} in the respective papers \cite{PerssonKressner23,UbaruSLQ2017,ChenHallman23} to the extent possible. We outline these choices in \Cref{tab:hyperparams}, where $k$ denotes the matvec budget. Note that we do not perform additional subspace iteration in \funNystrompp. For \alg{KA-STE}, if the matvec budget is insufficient to accommodate the attempted number of Lanczos iterations, the latter is reduced as necessary to satisfy the budget. 

\begin{table}[ht]
    \centering
    \begin{tabular}{l p{0.7\textwidth}c}
        \toprule
        \textbf{Method} & \textbf{Hyperparameter Settings} (for matvec budget $k$) & \textbf{Ref.} \\
        \midrule
        \textbf{\algname} & --- \\ 
        \funNys & --- \\ 
        \alg{SLQ} & Lanczos iterations: 5 &\cite{UbaruSLQ2017}\\ 
        \funNystrompp & \Nystrom rank: $k/2$; Lanczos iterations: 5 &\cite{PerssonKressner23}\\ 
        \alg{KA-STE (1)} & Block size: 2; Krylov depth: $k/4$; Lanczos iterations: 10 &\cite{ChenHallman23}\\ 
        \alg{KA-STE (2)} & Block size: 4; Krylov depth: $k/8$; Lanczos iterations: 5 &\cite{ChenHallman23}\\ 
        \bottomrule
    \end{tabular}
    \caption{Hyperparameter choices utilized in generating \Cref{fig:compare_log_poly}}
    \label{tab:hyperparams}
\end{table}

In \Cref{fig:compare_log_poly}, we plot the estimation errors for the estimation of $\tr(f(\A))$ using the aforementioned methods, as well as \funNys and \algname. Specifically, we focus on two specific cases: the \texttt{Exp} matrix with $f(x) = \log(1+x)$ and the \texttt{Flat} matrix with $f(x) = \sqrt{x}$. As detailed in \Cref{sssec:funNysCompare}, these two scenarios correspond to situations where $f(\A)$ exhibits fast and slow spectral decay, respectively, and are hence chosen as representative examples. We do not display the interpercentile regions in \Cref{fig:compare_log_poly} to keep the visual clear. 
Observe in \Cref{fig:compare_log_poly} (left) that for the \texttt{Exp} matrix, \algname achieves the lowest relative error among all the methods compared. On the other hand, for the \texttt{Flat} matrix, we observe that the multi-pass methods, notably \alg{SLQ}, generally produce more accurate estimates than \algname. These results suggest that \algname can still be valuable in applications where multi-pass methods can be applied and competitive with multi-pass trace estimation techniques for matrices that exhibit sufficiently fast spectral decay.

\begin{figure}[ht]
    \centering
    \includegraphics[width=0.38\textwidth]{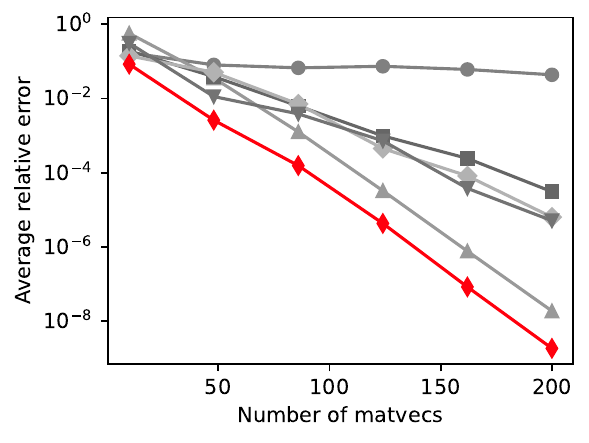}
    \includegraphics[width=0.52\textwidth]{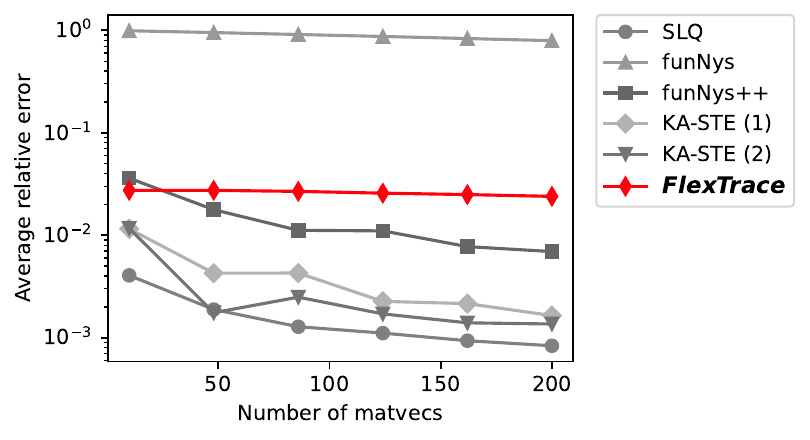}
    \caption{Comparison of \algname with existing methods for $f(x)=\log(1+x)$ with \texttt{Exp} (left) and $f(x) = \sqrt{x}$ with \texttt{Poly} (right).}
    \label{fig:compare_log_poly}
\end{figure}
\subsection{Application to Nuclear Norm Estimation}
\label{ssec:nuclear_norm}
In this section, we demonstrate the effectiveness of \algname for estimating the nuclear norm. The nuclear norm plays a key role in matrix completion---the problem of estimating missing entries of a partially observed matrix. Specifically, one of the most common approaches to matrix completion is nuclear-norm-minimization \cite{CandesExact2009,NguyenLowRank2019,PanishMatrix2025}, in which missing entries are completed such that the nuclear norm of the matrix is minimized. A famous application of matrix completion is the \textit{Netflix problem} \cite{bennett2007netflix}, where user–movie ratings are used to produce useful recommendations. We consider the \texttt{MovieLens 100k} dataset \cite{HarperMovieLens2015}, which is a benchmark for recommender systems and consists of 100,000 ratings (1 to 5) for 1682 movies from 943 users. We let $\mat{X} \in \R^{943 \times 1682}$ denote the rating matrix with rows corresponding to users and columns corresponding to movies, where missing entries are filled with zeros. Note that the $\X$ is a sparse matrix which contains approximately 94\% zero entries. Our goal is to estimate the nuclear norm of $\X$, $\|\X\|_* = \tr((\X\X^T)^{1 / 2})$. 

Computation of the nuclear norm typically requires the  computation of the singular value decomposition (SVD), which can be computationally expensive for large matrices. An alternative is to use the randomized SVD algorithm \cite{HalkoMartinssonTropp2011Survey}, which can estimate the leading singular values of $\X$ using matvecs with $\X$ and $\X^\top$. The sum of the estimated singular values can be used to approximate the nuclear norm of $\X$. We denote this approach as \alg{RandSVD}. On the other hand, we may utilize trace-estimation approaches such as \algname or \funNys to estimate $\|\X\|_* = \tr(f(\A))$ where $f(x) = \sqrt{x}$ and $\A = \X\X^\top$. We compare these approaches in \Cref{fig:matrix_completion}. Namely, we plot the relative errors obtained with each approach over the total number of matvecs (with both $\X$ and $\X^\top$).

\begin{figure}[ht]
    \centering
    \includegraphics[width=8cm]{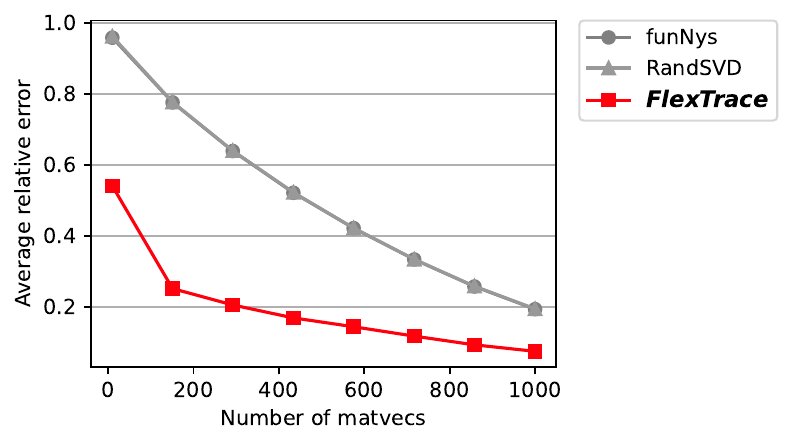}
    \caption{Comparison of \algname, \funNys, and \alg{RandSVD} for nuclear norm estimation of $\X$}
    \label{fig:matrix_completion}
\end{figure}

As observed in \Cref{fig:matrix_completion}, \algname results in superior efficiency compared to both \funNys and \alg{RandSVD}. Note that \funNys and \alg{RandSVD} are nearly indistinguishable in \Cref{fig:matrix_completion} since both methods estimate the leading $k$ eigenvalues of $\X\X^\top$ and thus perform similarly. On the other hand, \algname obtains with 300 matvecs nearly the same accuracy of \alg{RandSVD} with 1000 matvecs. This result suggests that \algname can be a valuable tool for efficient nuclear norm estimation in practical contexts.

\subsection{Application to Bayesian inverse problems}
\label{ssec:bayes}
In this section, we demonstrate the applicability of \algname in the context of Bayesian inversion. To this end, we consider the following advection-diffusion equation,
\begin{equation}\label{eq:adv_diff_prob}
\begin{alignedat}{2}
    \frac{\partial u}{\partial t} - \kappa \Delta u + \bs{v} \cdot \nabla u = f &&\qquad\text{in } \mc{D} \times (0, T),\\
    u(0, \bs{x}) = m(\bs{x}) &&\qquad\text{in } \mc{D},\\[0.5ex]
    \nabla u \cdot \bs{n} = 0 &&\qquad\text{on } \partial\mc{D}  \times (0, T), 
  \end{alignedat}
\end{equation}
where $\mc{D} \subseteq \R^3$ is a bounded domain, 
$u: \mc{D} \to \R$ is the state variable, $f: \mc{D} \to \R$ is a source term, $\bs{v}: \mc{D} \to \R^3$ is a divergence-free velocity field, and $\kappa > 0$ is a diffusion coefficient. The inverse problem aims to recover the unknown initial condition $m$ from measurement data of $u$ at the final time $T$.

We consider a Bayesian framework for this inverse problem. The infinite-dimensional formulation, finite-element discretization, and numerical solution of such problems have been well-studied in \cite{Bui-Thanh2013}, under the assumption of a Gaussian prior and additive Gaussian noise model, which we also adopt here. To keep the presentation simple, we focus on the discretized version of the problem. Namely, we wish to estimate $\bs{m} \in \R^n$ from pointwise observations $\bs{y} \in \R^d$ of the terminal state $u(\cdot, T)$. This relation is specified by the data model, 
\begin{equation}
    \bs{y} = \F\bs{m} + \bs{\eta}, \quad \bs{\eta} \sim \mc{N}(\bs{0}, \noisecov),
    \label{eq:fwd_model}
\end{equation}
where $\F \in \R^{d \times n}$ is a discretized parameter-to-observable map and $\noisecov = \sigma^2 \I \in \R^{d \times d}$ is the noise covariance matrix. We consider a prior distribution $\mu_{\text{pr}} = \mc{N}\left(\bs{0}, \prcov\right)$ where $\prcov$ is obtained by discretizing the squared inverse of an elliptic differential operator \cite{Bui-Thanh2013}, with scale parameter $\alpha = 1$ and correlation length parameter $\beta=5$. It is well known that, in this setting, the posterior distribution of the state variable $\bs{m}$ is given by $\mu_{\text{post}}^{\bs{y}} = \mc{N}(\bs{m}_{\text{\tiny MAP}}, \postcov)$, where 
\[\postcov = \left( \F^*\noisecov^{-1} \F + \prcov^{-1}\right)^{-1}, \quad \bs{m}_{\text{\tiny MAP}} = \postcov\F^*\noisecov^{-1} \bs{y}.\]

In this numerical example, we let $\mc{D} = (0, 1)^3$ and consider the discretized parameter dimension $n = 100,\!000$. The velocity field is set as $\bs{v}([x_1, x_2, x_3]) := -(0.5-x_2, x_1-0.5, 0)^\top$ and the terminal time as $T = 1.0$. We generate synthetic data by manufacturing a ground-truth initial condition, $\bs{m}_{\rm true}$, as shown in \Cref{fig:bayes_ic} (left). We solve the forward model \Cref{eq:fwd_model} using $\bs{m}_{\rm true}$ to obtain observation data $\bs{y}$ on a uniform grid of $d = 216$ sensor locations, as visualized in \Cref{fig:bayes_ic} (right). To simulate measurement error, we add 1\% noise ($\sigma = 0.01\|\bs{y}\|_2/\sqrt{d} $) to the observations $\bs{y}$.

\begin{figure}[ht]
    \centering
    \includegraphics[width=0.35\textwidth]{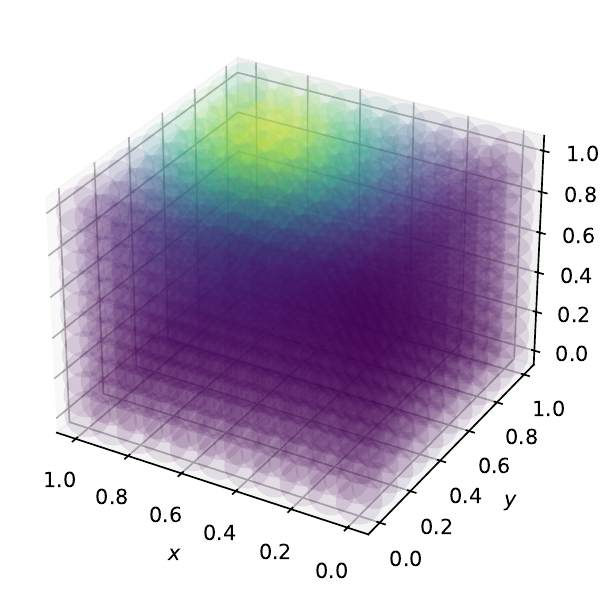}
    \includegraphics[width=0.4\textwidth]{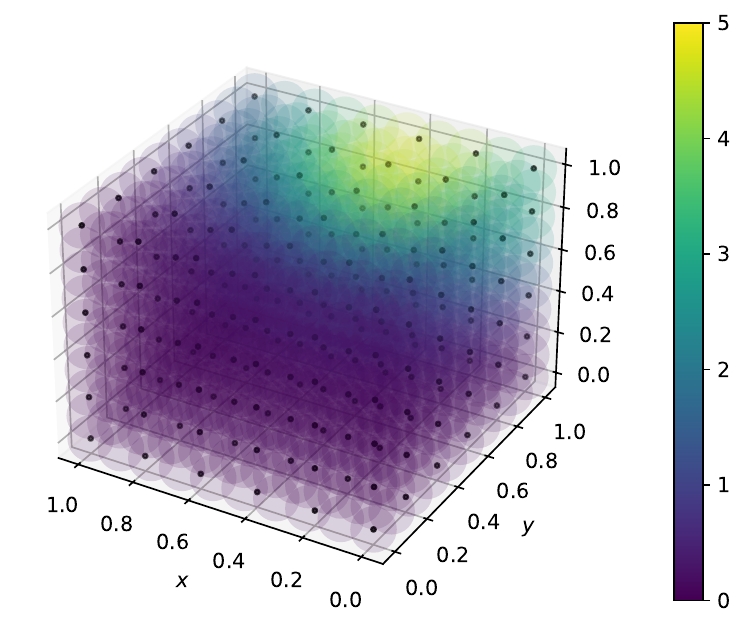}
    \caption{Ground-truth initial condition $\bs{m}_{\rm true}$ (left) and terminal state (right)}
    \label{fig:bayes_ic}
\end{figure}

For inverse problems, it is often essential to quantify the amount of ``information'' that one expects to gain through the inversion process, namely, the expected information gain (EIG). This measure is also known in the context of the optimal experimental design for linear Bayesian inverse problems as the D-optimality criterion \cite{Alexanderian21}. In the present linear Gaussian problem, 
\begin{equation}
    \Psi_{EIG} = \frac{1}{2} \log \det (\I + \Hmt),
    \label{eq:psi_kl}
\end{equation}
where $\Hmt := \prcov^{1/2}\F^*\noisecov^{-1}\F\prcov^{1/2}$ is the \textit{prior-preconditioned data-misfit} Hessian. Note that the Hessian is implicitly defined and requires two or more PDE solves per application. For large-dimensional problems, it is therefore difficult to compute the EIG explicitly. On the other hand, \algname enables efficient estimation of the EIG without explicitly computing the posterior covariance matrix. 

The magnitude of the diffusion plays an important role in the governing physics of the inverse problem. To analyze this, in \Cref{fig:norm_eigvals_compare}, we plot the eigenvalues of $\Hmt$ for $\kappa \in \{10^{-3}, 10^{-2}, 10^{-1}\}$. Observe that the rate of spectral decay of $\Hmt$ depends significantly on the diffusion coefficient $\kappa$. Namely, large values of $\kappa$ relate to diffusion-driven regimes where the Hessian tends to exhibit fast spectral decay, while smaller $\kappa$ corresponds to advection-driven regimes with slower spectral decay. Therefore, we choose to analyze the effectiveness of \algname across these scenarios. Specifically, in \Cref{fig:kappa_compare}, we display the relative errors of \algname and \funNys for the EIG estimation problem with diffusion coefficients $\kappa \in \{10^{-3}, 10^{-2}, 10^{-1}\}$. Across both advection- and diffusion-driven regimes, we observe that \algname consistently outperforms \funNys. Notice that for problems with a smaller diffusion component, both methods achieve lesser relative accuracy in trace estimation owing to the slower spectral decay of $\Hmt$. In this case, \algname better captures the trailing spectral behavior of $\Hmt$, resulting in a wider performance margin. On the other hand, in cases where there is fast spectral decay (large $\kappa$), both methods achieve high accuracy, with \algname marginally outperforming \funNys.

\begin{figure}[ht]
     \centering
     \begin{subfigure}[b]{0.3\textwidth}
         \centering
         \includegraphics[width=\textwidth]{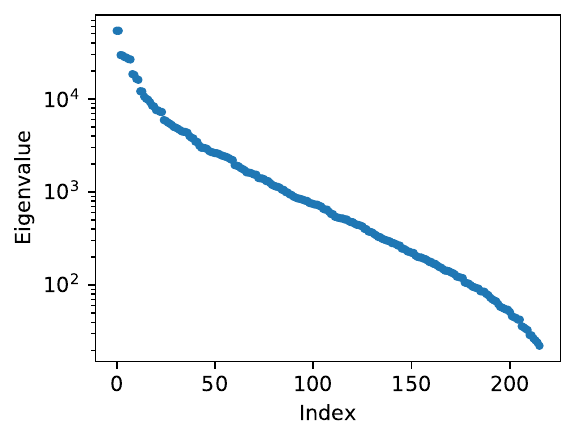}
         \caption{$\kappa = 10^{-3}$}
         \label{subfig:eigvals_1}
     \end{subfigure}
     \hfill
     \begin{subfigure}[b]{0.3\textwidth}
         \centering
         \includegraphics[width=\textwidth]{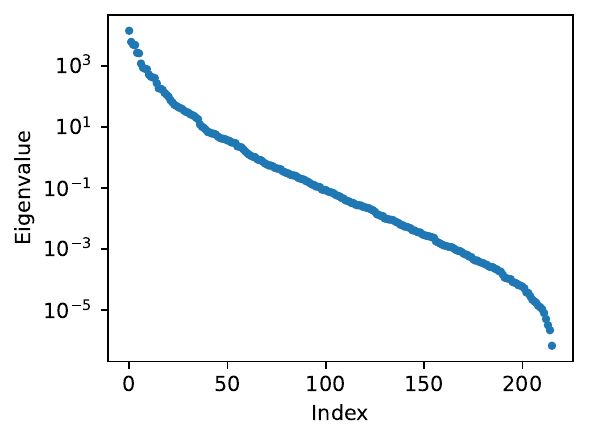}
         \caption{$\kappa = 10^{-2}$}
         \label{subfig:eigvals_2}
     \end{subfigure}
     \hfill
     \begin{subfigure}[b]{0.3\textwidth}
         \centering
         \includegraphics[width=\textwidth]{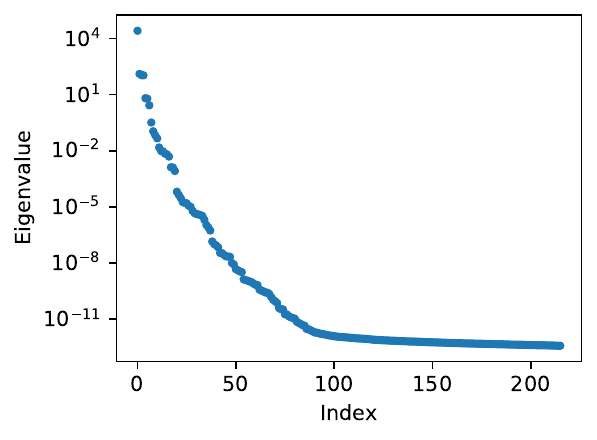}
         \caption{$\kappa = 10^{-1}$}
         \label{subfig:eigvals_3}
     \end{subfigure}
    \caption{Spectral decay of prior-preconditioned Hessian over different values of $\kappa$}
    \label{fig:norm_eigvals_compare}

     \begin{subfigure}[b]{0.3\textwidth}
         \centering
         \includegraphics[width=4cm]{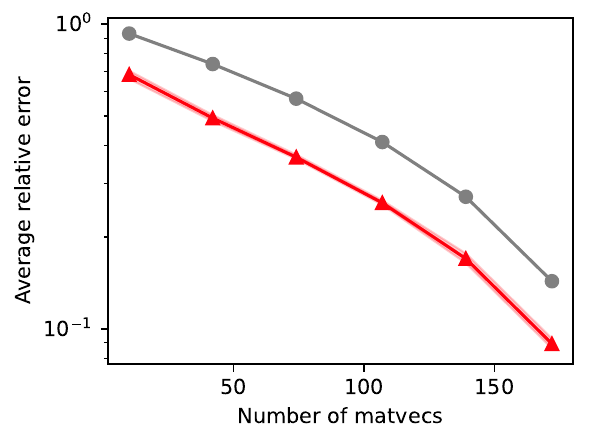}
         \caption{$\kappa = 10^{-3}$}
         \label{subfig:kappa_1}
     \end{subfigure}
     \hfill
     \begin{subfigure}[b]{0.3\textwidth}
         \centering
         \includegraphics[width=4cm]{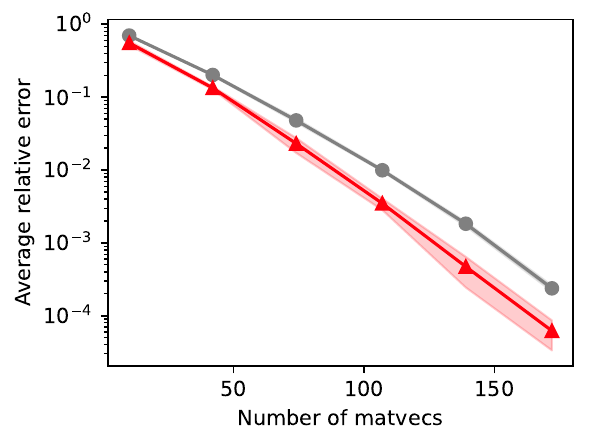}
         \caption{$\kappa = 10^{-2}$}
         \label{subfig:kappa_2}
     \end{subfigure}
     \hfill
     \begin{subfigure}[b]{0.28\textwidth}
         \centering
         \includegraphics[width=5.5cm]{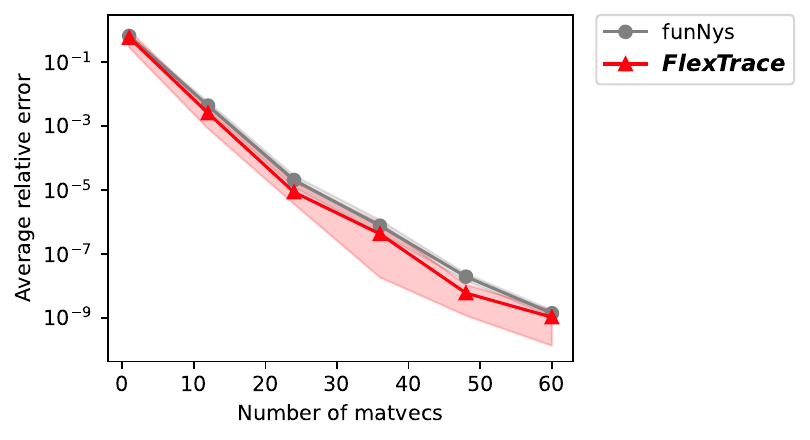}
         \caption{$\kappa = 10^{-1}$}
         \label{subfig:kappa_3}
     \end{subfigure}
    \caption{Performance of \algname in EIG estimation for different values of $\kappa$}
    \label{fig:kappa_compare}
\end{figure}

\subsection{Application to kernel methods}
\label{ssec:kernel}
In this section, we showcase the application of \algname for kernel methods. Broadly speaking, kernel methods are a family of techniques that exploit kernel functions to implicitly map input data into a high-dimensional (often infinite-dimensional) Hilbert space, thereby enabling linear models to capture intrinsically nonlinear patterns. Central to this framework is the \textit{kernel matrix}. For a set of data points $X = \{\bs{x}_1, \dots, \bs{x}_n\} \subseteq \R^d$, the kernel matrix $\mathbf{K} \in \R^{n \times n}$ is defined entry-wise as pairwise evaluations of a kernel function $\kappa: \R^d \times \R^d \to \R$, $K_{ij} = \kappa(\bs{x}_i, \bs{x}_j)$. Such matrices are important in several contexts including Gaussian processes (GP), Bayesian neural networks, determinantal point processes, and elliptical graphical models. 

We focus on Gaussian process regression, where we aim to approximate a function $f(\bs{x})$ by a Gaussian process by fitting a GP model to a collection of ``training'' data points $\{(\bs{x}_i, f(\bs{x}_i))\}_{i=1}^n$. There are several possible choices for the kernel function $k$, e.g., squared exponential kernel, quadratic kernel, Matérn kernel, etc. These kernels often depend on a set of hyperparameters that need to be optimized.
This is typically done by maximizing the log marginal likelihood of the GP model, defined as 
\[\log p(\bs{y} | X) = -\frac{1}{2} \bs{y}^\top(\K + \sigma_n^2\I)^{-1}\bs{y} - \frac{n}{2}\log(2\pi) - \frac{1}{2}\log\det(\K + \sigma_n^2\I),\]
where $\bs{y} \in \R^n$ denotes of (noisy) observations of $f(\bs{x})$ at the data points $X$ with noise variance $\sigma_n^2$. Note that explicitly computing the log marginal likelihood, particularly the log-determinant term, requires $\mathcal{O}(n^3)$ flops and $\mathcal{O}(n^2)$ storage, which can be severely limiting for large-scale problems.
Nevertheless, over the past few decades, efficient low-rank and structured sparse approximations to kernel matrices have been developed \cite{H2Pack2,H2Pack3} that enable fast approximate matvecs. We utilize the package \texttt{H2Pack} \cite{H2Pack1} that provides routines for obtaining $\mc{H}^2$ and $\mc{HSS}$ (hierarchically semi-separable) approximations of kernel matrices with linear-scaling matvec complexity. \algname can be used to compute the log-determinant of the kernel matrix without explicitly computing the kernel matrix itself.

For numerical tests, we consider the \texttt{3D\_road} elevation dataset of North Jutland, Denmark \cite{Kaul2013} from the UCI machine learning repository containing $434,874$ training data points. This data was constructed to benefit Advanced Driver Assistance Systems (ADAS) using massive aerial laser scan data spanning a region of 185 km $\times$ 130 km with an accuracy of $\pm 20$cm; see \Cref{fig:road}. We fit a Gaussian Process to the training data with added synthetic relative noise of $5\%$.

\begin{figure}[ht]
    \centering
    \includegraphics[width=0.5\textwidth]{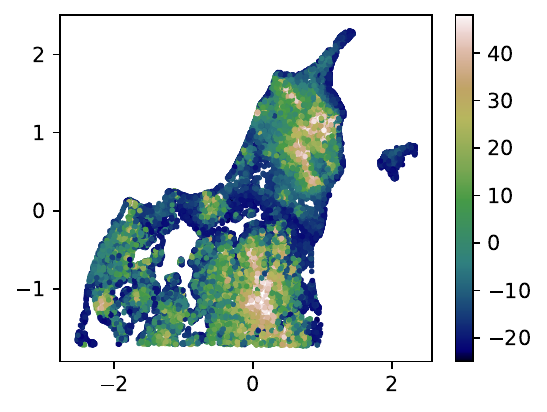}
    \caption{The \texttt{3D road} elevation dataset of North Jutland, Denmark \cite{Kaul2013}.}
    \label{fig:road}
\end{figure}

Our goal is to estimate $\log\det(\K + \sigma_n^2\I)$ implicitly. To enable computation of relative errors against the exact log-determinant, we first consider a smaller subset of the training dataset of size $10,\!000$ and scale all kernel functions by a factor of $0.1$ (fixed output variance hyperparameter).
\begin{figure}[ht]
    \centering
    \begin{subfigure}{0.35\textwidth}
        \centering
        \includegraphics[width=\textwidth]{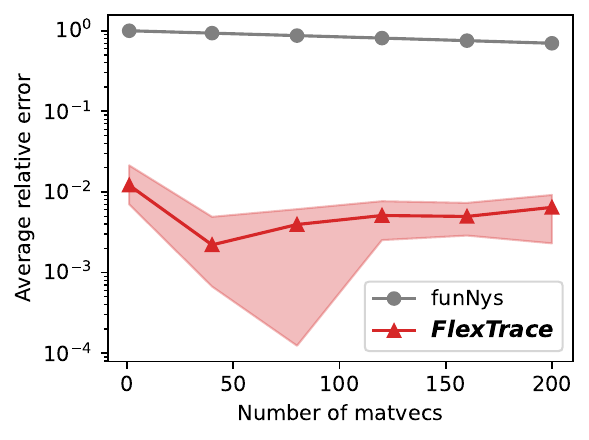}
        \caption{Squared exponential {\scriptsize ($\ell = 0.025$)}}
    \end{subfigure}
    \begin{subfigure}{0.35\textwidth}
        \centering
        \includegraphics[width=\textwidth]{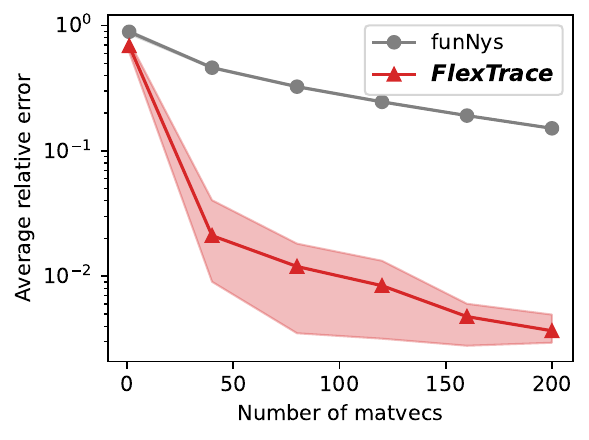}
        \caption{Rational quadratic {\scriptsize ($\ell = 0.05$, $\alpha = 0.25$)}}
    \end{subfigure}
    \begin{subfigure}{0.35\textwidth}
        \centering
        \includegraphics[width=\textwidth]{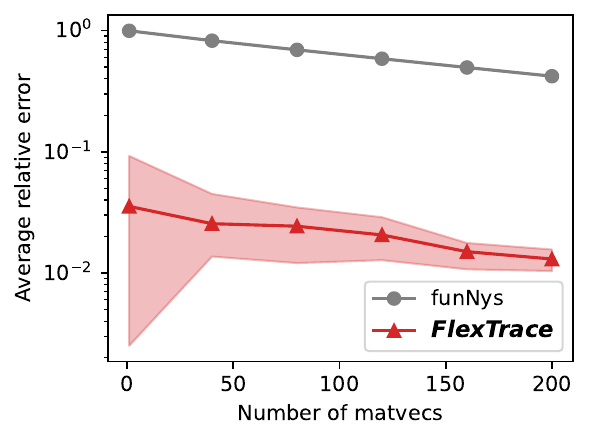}
        \caption{Matérn 3/2 {\scriptsize ($\ell = 0.10$)}}
    \end{subfigure}
    \begin{subfigure}{0.35\textwidth}
        \centering
        \includegraphics[width=\textwidth]{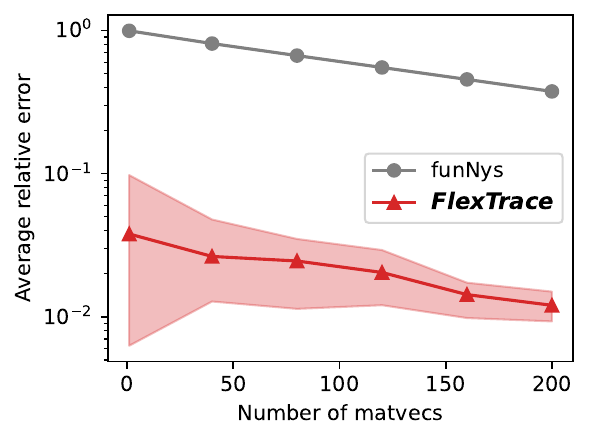}
        \caption{Matérn 5/2 {\scriptsize ($\ell = 0.10$)}}
    \end{subfigure}
    \caption{Relative error of estimation of $\log\det(\K + \sigma_n^2\I)$ using \algname.}
    \label{fig:kernel_performance}
\end{figure}
We observe in \Cref{fig:kernel_performance} that \algname consistently outperforms \funNys over the squared exponential, rational quadratic, and Matérn kernels. The strength of \algname is that it enables computationally feasible scaling of training data points. To this end, we test our approach on the full set of nearly $380,\!000$ data points. Note that since explicitly computing the true quantity for comparison becomes intractable for such large scale problems, we present simply the trace estimates obtained from \algname and \funNys in \Cref{fig:large_scale}. The results for this experiment are also not aggregated over multiple independent trials due to the high computational cost. Nevertheless, we observe that \algname matches the trace estimate obtained using \funNys with $10,\!000$ matvecs with a budget of less than half as many matvecs. This, along with our reduction in average relative error, suggests that \algname can be a powerful and effective tool for trace estimation of matrix functions in large-scale problems.

\begin{figure}[ht]
    \centering
    \includegraphics[width=7cm]{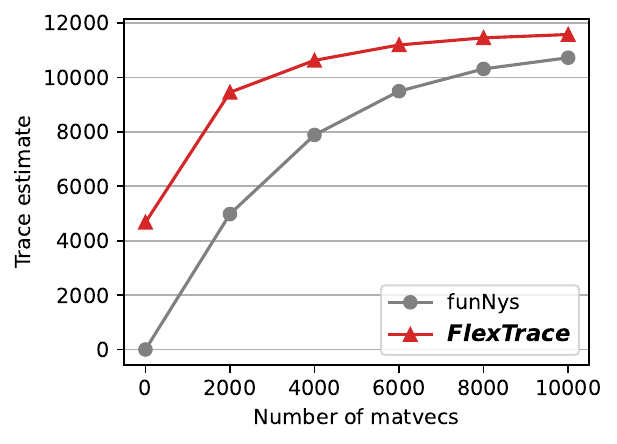}
    \caption{Application of \algname to large-scale kernel problem}
    \label{fig:large_scale}
\end{figure}

\section{Proofs of theoretical results}
\label{sec:proofs}
In this section, we detail the proofs necessary for the theoretical results presented in \Cref{ssec:key_theory}. First, we provide the proof of \Cref{thm:symmetrize} below. This is followed by the proofs of the theoretical results for \ialgname and \algname in \Cref{ssec:ialgname_proofs,ssec:algname_proofs}, respectively. Additional background and proofs of supporting results related to \Nystrom approximations are provided in \Cref{sec:app_prob_bounds}.

\begin{thmproof}{thm:symmetrize}
    Since the random vectors $\{\om_i\}_{i=1}^k$ are independent and identically distributed, 
    \begin{equation}
        \Exp[\overline{Q}] = \frac{1}{|S_k|} \sum_{\sigma \in S_k} \Exp[\Qh(\om_{\sigma(1)}, \dots, \om_{\sigma(k)})] = \Exp[\Qh].
        \label{eq:exp_equal_expr}
    \end{equation}
    Therefore, it suffices to show that the variance of $\overline{Q}$ is smaller than that of $\Qh$.
    With Jensen's inequality \cite[Equation 4]{JensenFonctions1906} applied to $\varphi(x) = x^2$, we obtain
    \[\overline{Q}^2 = \left( \frac{1}{|S_k|} \sum_{\sigma \in S_k} \Qh(\om_{\sigma(1)}, \dots, \om_{\sigma(k)}) \right)^2 \leq \frac{1}{|S_k|} \sum_{\sigma \in S_k} \left( \Qh(\om_{\sigma(1)}, \dots, \om_{\sigma(k)}) \right)^2.\]
    Taking the expectation of both sides and utilizing that $\Qh(\om_{\sigma(1)}, \dots, \om_{\sigma(k)})$ is identically distributed for $\sigma \in S_k$, we obtain
    \[\Exp[\overline{Q}^2] \leq \frac{1}{|S_k|} \sum_{\sigma \in S_k} \Exp\left[ \Qh(\om_{\sigma(1)}, \dots, \om_{\sigma(k)})^2 \right] = \Exp[\Qh^2].\]
    Observing that $\Exp[X^2] = \Var[X] + \Exp[X]^2$ for every random variable $X$ and substituting \Cref{eq:exp_equal_expr} yields $\Var[\overline{Q}] \leq \Var[\Qh]$. Finally, since $\MSE[\theta] = \Var[\theta] + \Exp[(\theta - Q)]^2$ for every estimator $\theta$ of $Q$, we obtain $\MSE[\overline{Q}] \leq \MSE[\Qh]$. Note that exchangeability of $\overline{Q}$ follows trivially by construction and hence is not proved separately.
\end{thmproof}

\subsection{Theoretical results for \ialgname}
\label{ssec:ialgname_proofs}
We first demonstrate that \ialgname is an unbiased estimator of $\tr(f(\A))$.

\begin{thmproof}{thm:bias_iFT}
    To show that $\trh_{\mathrm{iFT}}$ is an unbiased estimator of $\tr(f(\A))$, it suffices to show that $\trh_i$ is unbiased for all $1 \leq i \leq k$. Without loss of generality, consider $\trh_1$. Then,
    \begin{align}
        \Exp \left[\trh_1 - \tr(f(\A))\right] 
        &= \Exp \left[ \tr f(\AhOne) + \om_1^\top (f(\A) - f(\AhOne))\om_1 - \tr(f(\A)) \right] \nonumber \\
        &= \Exp \left[ \om_1^\top f(\A) \om_1 - \tr(f(\A)) \right] + \Exp \left[ \tr f(\AhOne) - \om_1^\top f(\AhOne) \om_1 \right]  \nonumber \\ 
        &= 0 + \Exp \left[ \tr f(\AhOne) - \om_1^\top f(\AhOne) \om_1 \right] \label{eq:proof_1}
    \end{align}
    In the final equality, the first term reduces to zero because of the unbiasedness of the Girard-Hutchinson estimator for $\tr(f(\A))$ (\Cref{thm:HutchVar}).
    For the remaining term, we use the law of total expectation,
    \begin{align*}
        \Exp \left[ \tr f(\AhOne) - \om_1^\top f(\AhOne) \om_1 \right] 
        &= \Exp_{\,\Om_{\mr 1}} \left[ \Exp_{\,\om_{1}}\left[ \tr f(\AhOne) - \om_1^\top f(\AhOne) \om_1 \ \Big| \ \Om_{\mr 1}\right] \right]
    \end{align*}
    Since $\AhOne$ does not depend on $\om_1$, we again utilize \Cref{thm:HutchVar} to observe that $\Exp_{\,\om_{1}}\left[ \tr f(\AhOne) - \om_1^\top f(\AhOne) \om_1 \ \Big| \ \Omi \right] = 0$. 
    Therefore, \Cref{eq:proof_1} reduces to
    \begin{align*}
        \Exp \left[\trh_1 - \tr(f(\A))\right] = \Exp \left[ \tr f(\AhOne) - \om_1^\top f(\AhOne) \om_1 \right] = 0.
    \end{align*}
    Thus, the average of the estimators, $\trh_{\mathrm{iFT}}$, is also an unbiased estimator of $\tr(f(\A))$.
\end{thmproof}

We now prove \Cref{thm:var_iFT}, which uses exchangeability and properties of the Girard-Hutchinson estimator to obtain a bound for the variance of \ialgname.

\begin{thmproof}{thm:var_iFT}
    The following proof mimics that of \cite[Proposition 5.3]{Epperly2023XTraceMT}. To control the variance, we note that by exchangeability,
    \begin{equation}
        \Var \left[\trh_{\rm iFT}\right]
        = \Var \Biggl[\frac{1}{k} \sum_{i=1}^{k} \trh_i\Biggr]
        = \Biggl(\frac{1}{k}\Biggr)\, \underbrace{\Var\left[\trh_1\right]}_{\sf A} 
        + \Biggl(1 - \frac{1}{k}\Biggr)\, \underbrace{\Cov\left[\trh_1, \trh_2\right]}_{\sf B}.
        \label{eq:splitAB}
    \end{equation}
    We first focus on the term {\sf A}. By the law of total variance,
    \begin{align*}
        \Var \left[ \trh_1 \right]
        &= \Exp\bigl[ \Var\big[ \trh_1 \, \big| \, \Om_{\mr 1} \big] \bigr] + \Var\bigl[ \Exp\bigl[ \trh_1 \, \big| \, \Om_{\mr 1} \bigr] \bigr].
    \end{align*}
    As observed in the proof of \Cref{thm:bias_iFT}, $\Exp\bigl[ \trh_1 \, \big| \, \Om_{\mr 1} \bigr] = \tr f(\A)$, so the second term vanishes. This leaves
    \begin{align}
        \Var \left[ \trh_1 \right] 
        &= \Exp \left[ \Var\big[ \trh_1 \, \big| \, \Om_{\mr 1} \big] \right] \nonumber \\
        &= \Exp \left[ \Var \left[ \tr f(\AhOne) + \om_1^\top (f(\A) - f(\AhOne))\om_1  \, \big| \, \Om_{\mr 1}\right]  \right]\nonumber \\
        &= \Exp \left[ \Var \left[\om_1^\top (f(\A) - f(\AhOne))\om_1  \, \big| \, \Om_{\mr 1}\right]  \right]. \label{eq:var_p_1}
    \end{align}
    The final equality in \Cref{eq:var_p_1} follows from the fact that $\tr f(\AhOne)$ is constant when conditioned on $\Om_{\mr 1}$. By \Cref{thm:HutchVar}, the variance term simplifies to
    \begin{align*}
        \textsf{A} = \Var \left[ \trh_1 \right] = 2 \Exp \left[ \| f(\A) - f(\AhOne) \|_F^2 \right].
    \end{align*}
    We turn to expanding the covariance term \textsf{B} in \Cref{eq:splitAB}. To this end, we first reformulate \textsf{B} by isolating terms that are dependent on $\om_1$ and $\om_2$. Namely, we consider the matrix $\Xh := f(\A) - f(\AhOneTwo)$, which is independent of $\om_1$ and $\om_2$, and observe that
    \begin{align}
        &\Exp\left[\left(\trh_1 - \tr f(\A) + \tr \Xh - \om_1^\top \Xh \om_1 \right)\left(\trh_2 - \tr f(\A) + \tr \Xh - \om_2^\top \Xh \om_2 \right)\right] \label{eq:p_3} \\ 
        &= \Exp\left[\left(\trh_1 - \tr f(\A)\right)\left(\trh_2 - \tr f(\A) + \tr \Xh - \om_2^\top \Xh \om_2 \right)\right] \label{eq:p_2}\\
        &= \Exp\left[\left(\trh_1 - \tr f(\A)\right)\left(\trh_2 - \tr f(\A)\right)\right] = \Cov\left[ \trh_1, \trh_2 \right]\label{eq:p_1}.
    \end{align}
    To obtain \Cref{eq:p_2} from \Cref{eq:p_3}, we condition on $\Om_{\mr 1}$ and average over $\om_1$, exploiting that $\Exp\bigl[ \trh_1 \, \big| \, \Om_{\mr 1} \bigr] = \tr f(\A)$. Similarly, to obtain \Cref{eq:p_1} from \Cref{eq:p_2}, we condition on $\Om_{\mr 2}$ and average over $\om_2$. Finally, we express {\sf B} as \Cref{eq:p_3} and apply the Cauchy--Schwarz inequality to obtain:
    \begin{align*}
        {\sf B} = \Cov\bigl[\trh_1, \trh_2\bigr]
        &= \Exp\bigl[\bigl(\trh_1 - \tr f(\A) + \tr \Xh - \om_1^\top \Xh \om_1
        \bigr)
        \bigl(\trh_2 - \tr f(\A) + \tr \Xh - \om_2^\top \Xh \om_2
        \bigr)\bigr] \\
        &\leq \Exp\left[\bigl|\trh_1 - \tr f(\A) + \tr \Xh - \om_1^\top \Xh \om_1\bigr|^2\right].
    \end{align*}
    Note that $\trh_1 - \tr f(\A)$ and $\tr \Xh - \om_1^\top \Xh \om_1$ have zero mean as a consequence of \Cref{thm:bias_iFT} and \Cref{thm:HutchVar} respectively. Therefore, 
    \[\Exp\left[\bigl|\trh_1 - \tr f(\A) + \tr \Xh - \om_1^\top \Xh \om_1\bigr|^2\right] = \Var\left[ \trh_1 - \tr f(\A) + \tr \Xh - \om_1^\top \Xh \om_1 \right]\]
    Thus,
    \begin{align*}
        {\sf B} 
        &\leq \Var\left[ \trh_1 - \tr f(\A) + \tr \Xh - \om_1^\top \Xh \om_1 \right] \\
        &= \Var\left[ \tr [f(\AhOne) - f(\AhOneTwo)] - \om_1^\top [f(\AhOne) - f(\AhOneTwo)] \om_1\right],
    \end{align*}
    where we substitute the definitions of $\trh_1$ and $\Xh$. Applying the law of total variance similarly to \Cref{eq:var_p_1}, we obtain
    \[{\sf B} \leq \Exp\left[\Var\bigl[ \om_1^\top\bigl[ f(\AhOneTwo) - f(\AhOne)\bigr]\om_1 \,\big|\, \Om_{\mr 1} \bigr]\right] = 2 \,\Exp \left[ \bigl \| f(\AhOne) - f(\AhOneTwo) \bigr \|_{\rm F}^2 \right].\]
    Combining {\sf A}, {\sf B} and bounding $1 - 1/k < 1$ completes the proof.
\end{thmproof}

\subsection{Theoretical results for \algname}~\\
\label{ssec:algname_proofs}
In this section, we prove the results for \algname presented in \Cref{ssec:key_theory}. First, we show that \algname is exchangeable. 

\begin{thmproof}{thm:flex_ex}
    To show that \algname is exchangeable, we demonstrate that \Cref{eq:FT} is a symmetrization of $\trh_1 = \tr f(\AhOne) + \om_1^\top (f(\Anys) - f(\AhOne))\om_1$. To this end, observe that $\Anys$ and $\AhOne$ are invariant to permutations of $(\om_2, \dots, \om_k)$ as a consequence of \nysprop{item:nys_invar}, and thus $\trh_1$ is also invariant to permutations of $(\om_2, \dots, \om_k)$. Therefore, to symmetrize $\trh_1$, it suffices to average over permutations $\sigma \in S_k$ such that $\sigma(1) = i$ for $1 \leq i \leq k$. This simplification yields the estimator, \Cref{eq:FT}, which, by \Cref{thm:symmetrize}, is thus exchangeable.
\end{thmproof}

\begin{thmproof}{thm:bias_FT}
    Consider the individual trace estimates $\trh_i$ from line 5 of \Cref{alg:practical}. Since these are identically distributed, $\Exp[\trh_{\mathrm{FT}}] = \Exp[\trh_i]$ for all $1 \leq i \leq k$. Without loss of generality, consider $\trh_1$. Note that 
    \begin{align}
        \Exp \ [\trh_1] &= \Exp [ \tr f(\AhOne) + \om_1^\top (f(\Anys) - f(\AhOne))\om_1 ] \nonumber \\
        &= \Exp \left[ \Exp [ \tr f(\AhOne) - \om_1^\top f(\AhOne)\om_1 \ | \ \Om_{\mr 1} ] \right] + \Exp[ \om_1^\top f(\Anys) \om_1 ] \nonumber \\
        &= \Exp[ \om_1^\top f(\Anys) \om_1 ] .
    \end{align}
    Since $f$ is operator monotone and $\AhOne \mleq \Anys \mleq \A$ (see \nysprop{item:nys_mono}), $f(\AhOne) \mleq f(\Anys) \mleq f(\A)$.
    Therefore, 
    \[0 \leq \om_1^\top f(\AhOne) \om_1 \leq \om_1^\top f(\Anys) \om_1 \leq \om_1^\top f(\A) \om_1.\]
    Taking the expectation and utilizing \Cref{thm:HutchVar},
    \[\Exp[\tr(f(\AhOne))] \leq \Exp[ \om_1^\top f(\Anys) \om_1] \leq \tr(f(\A)).\]
    Rearranging, we obtain
    \[\Exp[\tr(f(\A)) - \trh_{1}] = \tr(f(\A)) - \Exp[\om_1^\top f(\Anys) \om_1] \leq \Exp\left[\tr(f(\A) - f(\AhOne))\right].\]
    This completes the first inequality of \Cref{thm:bias_FT}. The second inequality in \Cref{thm:bias_FT}, $\Exp[\|f(\A) - f(\AhOne)\|_*] \leq (k-3)\|f(\Lm_{k\mr 2:n})\|_*$, is given in \cite[Theorem 3.9]{PerssonKressner23}. We also note that if $f$ is linear, \algname  reduces to \ialgname and is therefore unbiased; see \Cref{thm:bias_iFT}. 
 \end{thmproof}

\begin{thmproof}{thm:var_FT}
    By \Cref{thm:symmetrize}, the MSE of the symmetrized estimator, $\trh_{\rm FT}$, is bounded by the MSE of $\trh_1$. By definition of $\trh_1$,
    \begin{align*}
        \MSE[\trh_1] &\leq \Exp\left[ (\tr f(\AhOne) + \om_1^\top (f(\Anys) - f(\AhOne))\om_1 - \tr(f(\A)))^2 \right] \\
        &= \Exp\left[ (\tr(f(\A) - f(\AhOne)) - \om_1^\top (f(\Anys) - f(\AhOne))\om_1 )^2 \right] \\
        &= \Exp\left[ (\tr(\B) - \om_1^\top \G\om_1 )^2 \right],
    \end{align*}
    where we define $\B := f(\A) - f(\AhOne)$ and $\G := f(\Anys) - f(\AhOne)$. Using the property that $(\alpha-\beta)^2 \leq \alpha^2 + \beta^2$ for all $\alpha, \beta \geq 0$, 
    \[\MSE[\trh_1] \leq \Exp\left[ (\tr(\B) - \om_1^\top \G\om_1 )^2 \right] \leq \Exp\left[ (\tr(\B))^2 \right] + \Exp\left[ (\om_1^\top \G\om_1 )^2 \right].\]
    Note that $\mat{0} \mleq \G \mleq \B$ by \nysprop{item:nys_mono}. Therefore, $\tr(\B) = \|\B\|_*$ and $\om_1^\top \G \om_1 \mleq \om_1^\top \B \om_1$. So,
    \begin{align*}
        \MSE[\trh_1] \leq \Exp \left[ \|\B\|_*^2 \right] + \Exp\left[ (\om_1^\top \B\om_1 )^2 \right].
    \end{align*}
    We may simplify the right summand by using the fact that $\B$ is independent of $\om_1$. By \Cref{thm:HutchVar}, we observe that
    \begin{align*}
        \Exp_{\om_1}\left[ (\om_1^\top \B\om_1 )^2 \right] = \Var_{\om_1}\left[ \om_1^\top \B\om_1 \right] + \left[ \Exp[\om_1^\top \B\om_1] \right]^2 = 2\|\B\|_F^2 + \|\B\|_*^2.
    \end{align*}
    By the tower law of conditional expectation, $\Exp\left[ (\om_1^\top \B\om_1 )^2 \right] = \Exp\left[ \Exp_{\om_1}\left[ (\om_1^\top \B\om_1 )^2 \ | \ \B\right] \right] =\Exp\left[ 2\|\B\|_F^2 + \|\B\|_*^2 \right]$. Thus, 
    \[\MSE[\trh_1] \leq 2 \ \Exp \left[ \|\B\|_F^2 \right] + 2 \ \Exp \left[ \|\B\|_*^2 \right].\]
    Substituting the definition of $\B$ yields the desired result.
\end{thmproof}

\begin{thmproof}{thm:mse}
    The bounds for the MSE of $\trh_{\rm iFT}$ and $\trh_{\rm FT}$ immediately follow from substituting the inequality from \Cref{thm:second_mom_bd_nys} into \Cref{thm:var_iFT,thm:var_FT}, respectively. Finally, the bound for the MSE of $\trh_{\rm FN}$ follows directly from applying \Cref{thm:second_mom_bd_nys} with respect to the nuclear norm.
\end{thmproof}

\section{Conclusions}
\label{sec:concl}
In this article, we have developed \algname, a novel randomized trace estimation algorithm for the efficient estimation of $\tr(f(\A))$. The algorithm enables estimating the trace of matrix function with a randomized sketch of $\A$ by utilizing low-rank randomized \Nystrom approximations alongside classical trace estimation methods. The proposed algorithm offers advantages over traditional techniques in both performance and practicality, as \algname is hyperparameter-free, single-pass, function-agnostic, and embarrassingly parallel.
We provide theoretical guarantees for \algname and its idealized variant \ialgname for operator monotone functions and observe asymptotic decay for matrices with fast spectral decay. This is complemented with computationally scalable routines to accelerate and parallelize \algname. Our numerical results demonstrate the effectiveness of the proposed algorithm in both synthetic examples and applications. Specifically, \algname routinely outperforms comparable existing methods for matrices with different spectral properties. Moreover, we have shown that \algname accelerates the computation of quantities of interest in Bayesian inverse problems, matrix completion, and kernel methods. %

There are several interesting avenues for further research. Since \algname is a single-pass algorithm, the proposed algorithm might need to be modified for matrix functions exhibiting high nonlinearity within the spectrum. In such settings, one can consider extending \algname by exploiting properties of the function or fusing the exchangeable algorithm with multi-pass Krylov-based methods. Although the primary focus of this article has been on the application of \algname for operator monotone functions $f$, it is worth investigating the application of \algname to non-operator-monotone increasing functions. For example, in the case of $f(x) = x^2$, \algname is an unbiased estimator because of \nysprop{item:nys_quad}, and we have observed  in our numerical tests that it typically outperforms \funNys. Theoretical guarantees for the fun\Nystrom approximation for a broader class of functions is also necessary for the probabilistic analysis of \algname. Additional interesting directions include incorporating mixed-precision methods, performing stability analysis of these methods, and applying this method to different applications.

\section*{Acknowledgements}
This work was funded in part by the National Science Foundation through the award DMS-2026830 and the U.S. Department of Energy, Office of Science, Advanced Scientific Computing Research through the awards DE-SC0023188 and DE-SC0026310.
\bibliographystyle{abbrv}
\bibliography{refs}
\appendix\section{Theoretical guarantees for fun\Nystrom}~\\
\label{sec:app_prob_bounds}
In this section, we derive theoretical guarantees for the fun\Nystrom approximation \cite{PerssonKressner23}. Specifically, the primary goal of this section is to derive bounds for the expected squared norm of $f(\A)-f(\Anys)$, where $\Anys$ is a \Nystrom approximation of $\A$ (see \Cref{ssec:Nystrom}) and $f \in \mathscr{F}$. In \Cref{ssec:app_nystrom}, we list a few important properties of the \Nystrom approximation. This is followed by an outline of the theoretical approach in \Cref{ssec:app_general} and some results from random matrix theory in \Cref{ssec:app_random_theory}. Finally, we prove the key results for the fun\Nystrom approximation in \Cref{ssec:app_prob_bd}.

\subsection{Properties of the \Nystrom approximation}
\label{ssec:app_nystrom}

We first list a few useful properties of the \Nystrom approximation in \Cref{lem:nysprops} that will be useful in the proofs of our main results. 

\begin{lemma}
    The \Nystrom approximation \Cref{eq:nystrom} satisfies the following properties:\\
    \begin{enumerate}[label=(\alph*)]
        \itemsep1ex
        \item\label{item:nys_proj} \textbf{Projection:} Let $\bs{\Pi}_{\A^{1 / 2}\Om} = (\A^{1 / 2}\Om)(\A^{1 / 2}\Om)^\dagger$ denote the projection operator onto the range of $\A^{1 / 2}\Om$. Then, $\Anys = \A^{1 / 2} \bs{\Pi}_{\A^{1 / 2}\Om} \A^{1 / 2}$. 
        \item\label{item:nys_interp} \textbf{Interpolatory:} For every vector $\vec{\nu} \in \Range(\Om)$, $\Anys \vec{\nu} = \A \vec{\nu}$.
        \item\label{item:nys_invar} \textbf{Invariance:} $\Anys$ is invariant to permutations of the columns of $\Om$.
        \item\label{item:nys_mono} \textbf{Monotonicity:} $\Ahi \mleq \Anys \mleq \A$ for all $1 \leq i \leq k$.
        \item\label{item:nys_quad} \textbf{Exactness of quadratic forms:} Let $q\!: \R \to \R$ be a quadratic function. Then, 
        $\om_i^\top q(\Anys) \om_i = \om_i^\top q(\A) \om_i$ for all $1 \leq i \leq k$.\vspace{0.2cm}
    \end{enumerate}
    Note that these properties do not require $\Om$ to be a random matrix.
    \label{lem:nysprops}
\end{lemma}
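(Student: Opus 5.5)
The five properties are largely algebraic, so I would prove them in order, with (a) as the backbone. Assume first that $\Om^\top\A\Om$ has full rank, and write $\B := \A^{1/2}\Om$. Then $\Anys = \A^{1/2}\B(\B^\top\B)^\dagger\B^\top\A^{1/2}$. Since $\B(\B^\top\B)^\dagger\B^\top = \B\B^\dagger$ holds for any matrix $\B$, this gives (a), $\Anys = \A^{1/2}\bs{\Pi}_{\A^{1/2}\Om}\A^{1/2}$.

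Property (c) is immediate from (a). Permuting the columns of $\Om$ means replacing $\Om$ by $\Om\mat{P}$ for a permutation matrix $\mat{P}$, which leaves $\Range(\A^{1/2}\Om)$, and hence the projector, unchanged.

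For (b), write $\vec\nu = \Om\vec c$. Then $\bs{\Pi}\A^{1/2}\vec\nu = \bs{\Pi}\B\vec c = \B\vec c = \A^{1/2}\vec\nu$, so $\Anys\vec\nu = \A^{1/2}\A^{1/2}\vec\nu = \A\vec\nu$.

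For (d), note that $\Range(\A^{1/2}\Omi)\subseteq\Range(\A^{1/2}\Om)$. This gives the projector ordering $\bs{\Pi}_{\A^{1/2}\Omi}\mleq\bs{\Pi}_{\A^{1/2}\Om}\mleq\I$. Conjugating by $\A^{1/2}$ preserves the Loewner order, which yields $\Ahi\mleq\Anys\mleq\A$.

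For (e), let $q(x)=a+bx+cx^2$. The constant term cancels directly. The linear term follows from (b) with $\vec\nu=\om_i$, since $\om_i\in\Range(\Om)$. For the quadratic term, symmetry of both matrices gives
\[
\om_i^\top\Anys^2\om_i=\|\Anys\om_i\|_2^2=\|\A\om_i\|_2^2=\om_i^\top\A^2\om_i,
\]
again using (b).

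The only real subtlety is the identity $\B(\B^\top\B)^\dagger\B^\top=\B\B^\dagger$ underlying (a). I would verify it through the SVD $\B=\mat{W}\Sgm\V^\top$: both sides reduce to $\mat{W}\Sgm\Sgm^\dagger\mat{W}^\top$. This argument works even when $\B$ is rank-deficient, so the full-rank assumption can be dropped and all five properties hold for any $\Om$, random or not.
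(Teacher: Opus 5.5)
Your proposal is correct and follows essentially the same route as the paper. In both, (a) is the backbone: (b), (c) and (d) are read off from the projector form $\Anys = \A^{1/2}\bs{\Pi}_{\A^{1/2}\Om}\A^{1/2}$, and (e) follows from (b) via $\|\Anys\om_i\|_2 = \|\A\om_i\|_2$. The only difference is that you verify (a) yourself, through the SVD identity $\B(\B^\top\B)^\dagger\B^\top = \B\B^\dagger$. The paper instead cites Gittens--Mahoney for (a) and Halko--Martinsson--Tropp for the projector ordering used in (d).
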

\vspace{0.1cm}
\begin{proof}
    The proof of (a) is given in \cite[Lemma 1]{GittensMahoney2013Nystrom}, from which (b) immediately follows, since: 
    \[\Anys \Om = \A^{1 / 2} \bs{\Pi}_{\A^{1 / 2}\Om} \A^{1 / 2} \Om = \A\Om.\]
    Property (c) also follows from (a), since $\Range(\A^{1/2}\Om) = \Range(\A^{1/2}\Om\mat{P})$ for any permutation matrix $\mat{P}$. Finally, we show property (d). Since $\Range(\A^{1/2}\Om_{\mr i}) \subseteq \Range(\A^{1/2}\Om) \subseteq \R^n$, it holds \cite[Proposition 8.5]{HalkoMartinssonTropp2011Survey} that $\bs{\Pi}_{\A^{1/2}\Om_{\mr i}} \mleq \bs{\Pi}_{\A^{1/2}\Om} \mleq \I$ for $1 \leq i \leq k$. Therefore, by property (a), $\Ahi \mleq \Anys \mleq \A$  for $1 \leq i \leq k$. Finally, to show property (e), let $q(x) = \alpha x^2 + \beta x + \gamma$ for $\alpha, \beta, \gamma \in \R$. Observe that
    \[\om_i^\top \Anys^2 \om_i = (\Anys \om_i)^\top(\Anys \om_i) = (\A \om_i)^\top(\A \om_i) = \om_i^\top \A^2 \om_i,\]
    where we use symmetry of $\Anys$ and property (b) to substitute $\Anys \om_i = \A \om_i$ for all $1 \leq i \leq k$. Therefore,
    \[\om_i^\top q(\Anys) \om_i = \alpha (\om_i^\top \A^2 \om_i) + \beta (\om_i^\top \A \om_i) + \gamma (\om_i^\top \om_i) = \om_i^\top q(\A) \om_i,\] 
     for all $1 \leq i \leq k$. 
\end{proof}
\subsection{Outline of theoretical approach}
\label{ssec:app_general}
In this section, we provide the theoretical setting for the derivations for the fun\Nystrom approximation error that follow. To motivate our approach, we first briefly discuss an adjacent problem in randomized numerical linear algebra, which is the estimation of the optimal rank-$r$ approximation of a matrix where $ 1 \leq r \leq k$. For such problems, a randomized \Nystrom approximation obtained with $k$ random vectors \Cref{eq:nystrom} can be truncated, ignoring the $(k-r)$ trailing eigenvalues of $\Anys$. The resulting approximation is denoted here as $[\![\Anys]\!]_r$. This process of allocating more vectors for a random sketch than the desired rank and truncating is also known as \textit{oversampling} \cite{HalkoMartinssonTropp2011Survey}. 
Trace estimation techniques that rely on randomized low-rank approximations do not typically ignore any eigenvalues in the low-rank approximation. Nevertheless, we can build on the theoretical foundations developed for truncated randomized low-rank approximations. This relies on the following observation: for any unitarily invariant norm $\uinvnorm{\cdot}$, 
\begin{equation}
    \uinvnorm{f(\A)-f(\Anys)} \leq \uinvnorm{f(\A)-f(\AnysR)},
    \label{eq:oversampling}
\end{equation}
for all $1 \leq r \leq k$. This holds since $\AnysR \mleq \Anys \mleq \A$, and, 
consequently, $f(\AnysR) \mleq f(\Anys) \mleq f(\A)$, by the operator monotonicity of $f$, and the ordering of the norms follows from \cite[Lemma 3.1]{PerssonKressner23}. Therefore, to bound the error of the \Nystrom approximation $\Anys$, it suffices to bound the error of the truncated \Nystrom approximation, $\AnysR$. The following setting will be assumed for the derivations that follow.

\begin{setting}\label{setting}
    For $1 \leq r \leq k$, consider the spectral decomposition
    \begin{equation} \label{eq:spectraldecom}
        \A = \U \Lm \U^T = \begin{bmatrix} \U_1 & \U_2 \end{bmatrix} \begin{bmatrix} \Lm_1 & \\ & \Lm_2 \end{bmatrix} \begin{bmatrix} \U_1^T \\ \U_2^T \end{bmatrix},
    \end{equation}
    where $\Lm_1 = \diag(\lambda_1,\ldots,\lambda_r) \in \R^{r\times r}$,  
    $\Lm_2 = \diag(\lambda_{r+1},\ldots,\lambda_n) \in \R^{(n-r)\times (n-r)}$, $\U_1 \in \mathbb{R}^{n \times r}$, and $\U_2 \in \mathbb{R}^{n \times (n-r)}$.
    We call the partitioning of the eigenvalues/vectors in~\Cref{eq:spectraldecom}, an $r$-partition of $\A$. 
    Let $\Om \in \R^{n \times k}$ be a standard Gaussian random matrix, and
    let $\Om_1 := \U_1^T \Om \in \R^{r \times k}$ and $\Om_2 := \U_2^T \Om\in \R^{(n-r) \times k}$. We assume that $\rank(\Om_1) = r$, since this property holds almost surely (see, e.g., \cite[Lemma 3]{Yi2018OutlierDU}). Finally, we use the shorthand $\rho := k - r \geq 0$ to denote the oversampling parameter.
\end{setting}

\subsection{Some results from random matrix theory}
\label{ssec:app_random_theory}
In this section, we outline some preliminary results on Gaussian random matrices. \Cref{lem:exp_H_diff} provides expectation bounds on the fourth moments of norms of standard Gaussian random matrices under different norms. This is followed by \Cref{lem:combined_op_four} and \Cref{lem:combined_op_four_prob}, which provide properties on norms of products of random normal matrices.

\begin{lemma} %
    Let $\Hm \in \R^{r \times k}$ be a standard Gaussian random matrix, where $\rho = k - r \geq 4$. Then, 
    \begin{align*}
        &\Exp [\|\Hm^\dagger\|_{(4)}^4] = \frac{r(k-1)}{\rho(\rho-1)(\rho-3)},\\[1ex]
        &\Exp [\|\Hm^\dagger\|_F^4] = \frac{r^2\rho - 2r(r-1)}{\rho(\rho-1)(\rho-3)},\\[1ex]
        &\Exp [\|\Hm^\dagger\|_2^4] \leq \frac{e^4k^2}{(\rho+1)^3(\rho-3)}.
    \end{align*}
    \label{lem:exp_H_diff}
\end{lemma}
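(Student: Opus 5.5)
The plan is to reduce all three moments to moments of an inverse Wishart matrix, plus one tail bound for the spectral norm. Since $\Hm \in \R^{r\times k}$ has full row rank almost surely, $\Hm^\dagger = \Hm^\top (\Hm\Hm^\top)^{-1}$. The squared singular values of $\Hm^\dagger$ are therefore the eigenvalues of $\mat{W}^{-1}$, where $\mat{W} := \Hm\Hm^\top$ is a Wishart matrix $W_r(\I, k)$. This gives
\[
\|\Hm^\dagger\|_{(4)}^4 = \tr(\mat{W}^{-2}), \qquad \|\Hm^\dagger\|_F^4 = \bigl(\tr \mat{W}^{-1}\bigr)^2, \qquad \|\Hm^\dagger\|_2^4 = \lambda_{\min}(\mat{W})^{-2}.
\]

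For the two exact identities I will invoke the classical second-moment formula for the inverse Wishart distribution (e.g.\ von Rosen's moments). Write $w^{ij}$ for the entries of $\mat{W}^{-1}$, and set $c := 1/(\rho(\rho-1)(\rho-3))$, which is finite because $\rho \geq 4$. With identity scale matrix the formula reads
\[
\Exp[w^{ij}w^{kl}] = c\bigl[(\rho-2)\delta_{ij}\delta_{kl} + \delta_{ik}\delta_{jl} + \delta_{il}\delta_{jk}\bigr].
\]
Both identities then follow by contracting indices.

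\begin{itemize}
\item For the Schatten-4 norm, $\Exp\,\tr(\mat{W}^{-2}) = \sum_{i,j}\Exp[w^{ij}w^{ji}] = c\,[(\rho-2)r + r^2 + r] = c\,r(\rho+r-1) = c\,r(k-1)$. This is the first identity.
\item For the Frobenius norm, $\Exp(\tr\mat{W}^{-1})^2 = \sum_{i,k}\Exp[w^{ii}w^{kk}] = c\,[(\rho-2)r^2 + 2r] = c\,[r^2\rho - 2r(r-1)]$. This is the second identity.
\end{itemize}

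The main obstacle is pinning down this formula with the right normalization: the degrees of freedom are $k$ and the dimension is $r$, so the denominators involve $k-r=\rho$. I would either cite it directly or rederive it. A rederivation uses the Bartlett decomposition, or the known values $\Exp[\mat{W}^{-1}] = \I/(\rho-1)$ and $\Exp[\mat{W}^{-2}] = (k-1)\I/(\rho(\rho-1)(\rho-3))$, combined with orthogonal invariance. Orthogonal invariance forces the fourth-order moment tensor to be a combination of the three index pairings above, so it has only two free coefficients, and those two known moments fix both of them.

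For the spectral norm I will use the tail bound of Halko, Martinsson and Tropp (Prop.~10.4 in their survey):
\[
\mathbb{P}\bigl(\|\Hm^\dagger\|_2 \geq C t\bigr) \leq t^{-(\rho+1)} \quad \text{for } t \geq 1, \qquad C := \frac{e\sqrt{k}}{\rho+1}.
\]
Integrating the tail, $\Exp\|\Hm^\dagger\|_2^4 = C^4\int_0^\infty 4t^3\,\mathbb{P}(\|\Hm^\dagger\|_2 > Ct)\,dt$. I bound the probability by $\min(1, t^{-(\rho+1)})$, which gives
\[
\Exp\|\Hm^\dagger\|_2^4 \leq C^4\Bigl(1 + \frac{4}{\rho-3}\Bigr) = C^4\,\frac{\rho+1}{\rho-3} = \frac{e^4k^2}{(\rho+1)^3(\rho-3)},
\]
which is the stated bound. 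The integral converges precisely because $\rho \geq 4$.
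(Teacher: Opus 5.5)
Your proposal is correct and is essentially the paper's argument. For the two exact identities, the paper simply cites Lemma A.2 of Persson et al., and you recover them by contracting the inverse-Wishart second-moment tensor; your coefficients and both contractions check out. For the spectral norm, the paper integrates the raw tail of Halko--Martinsson--Tropp Prop.~A.3 with an optimized cutoff, obtains your bound times $[2\pi(\rho+1)]^{-2/(\rho+1)}\le 1$, and then discards that factor, so your use of Prop.~10.4 with cutoff at $t=1$ is the same computation with the prefactor dropped up front.

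One caveat, which affects only your fallback rederivation of the tensor. $\Exp[\mat{W}^{-1}]$ is a first moment, so it places no constraint on the coefficients in $\Exp[w^{ij}w^{kl}] = a\,\delta_{ij}\delta_{kl} + b\,(\delta_{ik}\delta_{jl}+\delta_{il}\delta_{jk})$. Meanwhile $\Exp[\mat{W}^{-2}]$ yields only the single equation $a+(r+1)b = (k-1)/(\rho(\rho-1)(\rho-3))$. You therefore need a second genuinely second-order input. One choice is $\Exp[(w^{11})^2] = a+2b = 1/((\rho-1)(\rho-3))$, which follows from $1/w^{11}\sim\chi^2_{\rho+1}$ (a Schur complement of the Wishart matrix). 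For $r\ge 2$ the two equations then give $b = 1/(\rho(\rho-1)(\rho-3))$ and $a=(\rho-2)b$. Citing von Rosen's formula directly, as in your main line, avoids the issue entirely.
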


\begin{proof}
    The first two equalities follow from \cite[Lemma A.2]{PerssonRandomized2025}. We focus on proving the last inequality. From \cite[Proposition A.3]{HalkoMartinssonTropp2011Survey},
    \[\mathbb{P}\{\|\Hm^{\dagger}\|_2>t\}\leq C \ t^{-(\rho+1)}, \quad\text{where}\quad C = \frac{1}{\sqrt{2\pi(\rho+1)}}\left[\frac{e\sqrt{k}}{\rho+1}\right]^{\rho+1}.\]
    Therefore, we may express
    \[\mathbb{P}\{\|\Hm^{\dagger}\|_2^4>t\} = \mathbb{P}\{\|\Hm^{\dagger}\|>t^{1 / 4}\} \leq Ct^{-(\rho + 1)/4}.\]
    The integral formula for the mean of a nonnegative random variable implies that for all $E > 0$,
    \begin{align*}
        \Exp[\|\Hm^{\dagger}\|_2^4]
        = \int_0^\infty \mathbb{P}\{\|\Hm^{\dagger}\|_2^4>t\} \ dt 
        &\leq E + \int_E^\infty \mathbb{P}\{\|\Hm^{\dagger}\|_2^4>t\} \ dt \\
        &\leq E + \int_E^\infty Ct^{-(\rho + 1)/4} \ dt \\
        &\leq E + \frac{4C}{\rho-3}E^{-(\rho - 3)/4}.
    \end{align*}
    We note that the upper bound attains a minimum at $E = C^{4/(\rho+1)}$. Substituting and simplifying, we obtain 
    \begin{align*}
        \Exp[\|\Hm^{\dagger}\|_2^4] \leq \frac{e^4k^2}{(\rho+1)^3(\rho-3)} \ \left[ 2\pi(\rho+1) \right]^{-2 / (\rho+1)}.
    \end{align*}
    Substituting the fact that $\left[ 2\pi(\rho+1) \right]^{-2 / (\rho+1)} \leq 1$ for all $\rho \geq 0$ yields the desired bound.
\end{proof}

\begin{lemma}
    Consider fixed matrix $\Sm$ and independent standard Gaussian matrices $\G$ and $\Hm$ of suitable dimensions. Then,
    \[
        \Exp[\| \Sm \G \Hm^{\dagger} \|_2^4] \leq 8 \| \Sm \|_2^4 \Exp \| \Hm^{\dagger} \|_{\rm F}^4 + 16 \| \Sm \|_2^4 \Exp \| \Hm^{\dagger} \|_{(4)}^4 + 8 (\| \Sm \|_{\rm F}^4 + 2\| \Sm \|_{(4)}^4) \Exp \| \Hm^{\dagger} \|_2^4.
    \]
    \label{lem:combined_op_four}
\end{lemma}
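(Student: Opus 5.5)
We condition on $\Hm$ and first average over $\G$, which is independent of $\Hm$. With $\Hm$ fixed, write $\mat{M} := \Hm^\dagger$, so that we need a bound on $\Exp_{\G}\|\Sm\G\mat{M}\|_2^4$. The spectral norm is bounded by the Frobenius norm, so
\[
\|\Sm\G\mat{M}\|_2^4 \le \|\Sm\G\mat{M}\|_F^4 = \bigl(\tr(\mat{M}^\top\G^\top\Sm^\top\Sm\G\mat{M})\bigr)^2 .
\]
This quantity is a squared Gaussian quadratic form in $\mathrm{vec}(\G)$, and its second moment can be computed exactly.

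\textbf{Step 1: exact moment of the quadratic form.} Set $\B := \Sm^\top\Sm$ and $\C := \mat{M}\mat{M}^\top$, and let $\vec{g} := \mathrm{vec}(\G)$. Then
\[
\|\Sm\G\mat{M}\|_F^2 = \vec{g}^\top(\C\otimes\B)\vec{g} .
\]
We take $\Exp$ of its square via \Cref{thm:HutchVar}: the mean is $\tr(\C\otimes\B)$ and the variance is $2\|\C\otimes\B\|_F^2$. Since $\tr(\C\otimes\B)=\tr\B\,\tr\C$ and $\|\C\otimes\B\|_F^2=\|\B\|_F^2\|\C\|_F^2$, this gives
\[
\Exp_{\G}\|\Sm\G\mat{M}\|_F^4 = \|\Sm\|_F^4\|\mat{M}\|_F^4 + 2\|\Sm\|_{(4)}^4\|\mat{M}\|_{(4)}^4 .
\]

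\textbf{Step 2: the spectral-norm refinement.} Step 1 alone puts the product $\|\Sm\|_F^4\|\Hm^\dagger\|_F^4$ in the bound, which is too weak. The target instead pairs $\|\Sm\|_2$ with the Schatten norms of $\Hm^\dagger$, and pairs the Schatten norms of $\Sm$ with $\|\Hm^\dagger\|_2$.

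For this we use Gaussian concentration in the style of \cite[Proposition 10.1]{HalkoMartinssonTropp2011Survey}. The map $\G\mapsto\|\Sm\G\mat{M}\|_2$ is Lipschitz with constant $\|\Sm\|_2\|\mat{M}\|_2$, and its mean satisfies
\[
\Exp\|\Sm\G\mat{M}\|_2 \le \|\Sm\|_2\|\mat{M}\|_F + \|\Sm\|_F\|\mat{M}\|_2 .
\]
Fourth moments of $\|\Sm\G\mat{M}\|_2$ are then controlled by combining
\[
(a+b)^4 \le 8a^4+8b^4 ,
\]
applied to the mean term and the fluctuation term, with a moment estimate for the fluctuation.

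A cleaner alternative, which we would try first, avoids concentration by a dimension-free second-moment argument. Write
\[
\|\Sm\G\mat{M}\|_2^2 = \|\mat{M}^\top\G^\top\B\G\mat{M}\|_2 ,
\]
and split
\[
\G^\top\B\G = \tr(\B)\,\I + (\G^\top\B\G - \tr(\B)\,\I).
\]
This yields
\[
\|\Sm\G\mat{M}\|_2^2 \le \|\Sm\|_F^2\|\mat{M}\|_2^2 + \|\mat{M}^\top(\G^\top\B\G-\tr\B\,\I)\mat{M}\|_F .
\]
Squaring with $(a+b)^2\le 2a^2+2b^2$ gives
\[
\|\Sm\G\mat{M}\|_2^4 \le 2\|\Sm\|_F^4\|\mat{M}\|_2^4 + 2\,\Exp_{\G}\|\mat{M}^\top(\G^\top\B\G-\tr\B\,\I)\mat{M}\|_F^2 .
\]
The last expectation is an explicit Gaussian fourth-moment computation. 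Using Isserlis/Wick on the entries $(\G^\top\B\G)_{ij}=\vec{g}_i^\top\B\vec{g}_j$, we expect it to give
\[
\tr(\B^2)\bigl(\tr(\C)^2+\tr(\C^2)\bigr) \le \|\Sm\|_2^2\|\Sm\|_F^2\bigl(\|\mat{M}\|_F^4+\|\mat{M}\|_{(4)}^4\bigr).
\]

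\textbf{Step 3: matching the stated constants.} The product $\|\Sm\|_2^2\|\Sm\|_F^2$ is split with AM--GM,
\[
2\|\Sm\|_2^2\|\Sm\|_F^2\|\mat{M}\|_F^4 \le \|\Sm\|_2^4\|\mat{M}\|_F^4\cdot c + \dots ,
\]
or bounded more crudely, so as to produce the three groups in the statement: $\|\Sm\|_2^4\|\mat{M}\|_F^4$, $\|\Sm\|_2^4\|\mat{M}\|_{(4)}^4$, and $(\|\Sm\|_F^4+2\|\Sm\|_{(4)}^4)\|\mat{M}\|_2^4$. The last group arises naturally from bounding the Step 1 identity with $\|\mat{M}\|_F,\|\mat{M}\|_{(4)}$ replaced by $\|\mat{M}\|_2$ along the $\Sm$-heavy directions. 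Finally, taking the expectation over $\Hm$ by the tower property yields the lemma.

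\textbf{Main obstacle.} The hard step is Step 2, and specifically obtaining the mixed pairing (operator norm on one factor, Schatten norm on the other) with the constants $8,16,8$. The Wick computation must be organized so that each term carries a $\|\cdot\|_2$ on the correct factor. If the direct computation does not yield these constants, the fallback is the concentration route sketched at the start of Step 2, where the fluctuation of $\|\Sm\G\mat{M}\|_2$ is handled via Gaussian moment bounds.
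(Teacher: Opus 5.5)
\textbf{There is a genuine gap: your plan does not establish the conditional estimate at the heart of this lemma.} Your outer frame does match the paper's: condition on $\Hm$, bound the $\G$-expectation, then apply the tower property. But the route you say you would try first cannot produce the required bound.

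Your Wick computation is correct:
\[
\Exp_{\G}\|\mat{M}^\top(\G^\top\B\G-\tr(\B)\I)\mat{M}\|_F^2=\|\Sm\|_{(4)}^4\bigl(\|\mat{M}\|_F^4+\|\mat{M}\|_{(4)}^4\bigr).
\]
Because it is exact, it exposes the cost of replacing the spectral norm of the centered matrix by its Frobenius norm. You are left with the term $2\|\Sm\|_{(4)}^4\|\mat{M}\|_F^4$, in which neither factor carries a spectral norm, and nothing on the right-hand side controls it.

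\begin{itemize}
\item Conditionally, take $\Sm=\mat{M}=\I_p$. Your bound is $2p^3+4p^2$, whereas the conditional right-hand side is $16p^2+32p$ (and indeed $\Exp\|\G\|_2^4\approx 16p^2$).
\item The mismatch survives the average over $\Hm$. With $\Sm=\I_r$ and $k=2r$, your bound grows like $2r$. By the moment formulas for $\Hm^\dagger$, the lemma's right-hand side stays bounded.
\item No AM--GM split in Step 3 repairs this, because both pieces you would split still carry $\|\mat{M}\|_F^4$ rather than $\|\mat{M}\|_2^4$.
\item ``Replacing $\|\mat{M}\|_F$ by $\|\mat{M}\|_2$ along $\Sm$-heavy directions'' in the Step 1 identity is not a valid step. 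That identity is exact for the Frobenius norm and intrinsically pairs Frobenius with Frobenius.
\end{itemize}

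\textbf{Your concentration fallback gets the right pairing but not the stated constants.} Set $a=\|\Sm\|_2\|\mat{M}\|_F$ and $b=\|\Sm\|_F\|\mat{M}\|_2$. The steps $\Exp\|\Sm\G\mat{M}\|_2\le a+b$, $(\mu+X)^4\le 8\mu^4+8X^4$ and $(a+b)^4\le 8(a^4+b^4)$ put a coefficient $64$ on $\|\Sm\|_2^4\|\mat{M}\|_F^4$, plus a Lipschitz fluctuation term. That proves a weaker lemma, not the one stated.

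\textbf{The missing ingredient is the conditional $L^4$ inequality the paper imports from \cite[Lemma B.1]{TroppWebber2023Survey}:}
\[
\bigl(\Exp_{\G}\|\Sm\G\Hm^\dagger\|_2^4\bigr)^{1/4}\le\|\Sm\|_2\bigl(\|\Hm^\dagger\|_F^4+2\|\Hm^\dagger\|_{(4)}^4\bigr)^{1/4}+\|\Hm^\dagger\|_2\bigl(\|\Sm\|_F^4+2\|\Sm\|_{(4)}^4\bigr)^{1/4}.
\]
Raising this to the fourth power with $(a+b)^4\le 8(a^4+b^4)$ and averaging over $\Hm$ gives exactly the constants $8,16,8$.

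Note that $\|\mat{M}\|_F^4+2\|\mat{M}\|_{(4)}^4=\Exp\|\mat{M}^\top\vec{g}\|_2^4$ for a single standard Gaussian vector $\vec{g}$. This is your Step 1 identity with a rank-one, unit-norm factor in place of $\Sm$. The Chevet-type bound above only generates such vector-level fourth moments of one factor, each multiplied by the spectral norm of the other. That is why the cross term $\|\Sm\|_{(4)}^4\|\mat{M}\|_F^4$ never appears.
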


\begin{proof}
    The following proof mostly follows from \cite[Lemma B.1]{TroppWebber2023Survey}, which states,
\begin{equation*}
    (\Exp_{\G}[\| \Sm \G \Hm^{\dagger} \|_2^4])^{1 \slash 4} 
    \leq \| \Sm \|_2 
    (\| \Hm^{\dagger} \|_{\rm F}^4
    + 2\| \Hm^{\dagger} \|_{(4)}^4)^{1 \slash 4}
    + \| \Hm^{\dagger} \|_2 
    (\| \Sm \|_{\rm F}^4
    + 2\| \Sm \|_{(4)}^4)^{1 \slash 4}.
\end{equation*}
Raising to the fourth power and using the inequality $(a+b)^4 \leq 8(a^4 + b^4)$ for $a, b \geq 0$, we get
\begin{align*}
    \Exp_{\G}[\| \Sm \G \Hm^{\dagger} \|_2^4] 
    &\leq 8 \| \Sm \|_2^4 (\| \Hm^{\dagger} \|_{\rm F}^4 + 2\| \Hm^{\dagger} \|_{(4)}^4) + 8 \| \Hm^{\dagger} \|_2^4 (\| \Sm \|_{\rm F}^4 + 2\| \Sm \|_{(4)}^4).
\end{align*}
Taking the expectation over $\Hm$, we have
\begin{align*}
    \Exp[\| \Sm \G \Hm^{\dagger} \|_2^4] 
    &= \Exp_{\Hm} [\Exp_{\G}[\| \Sm \G \Hm^{\dagger} \|_2^4]] \\
    &\leq 8 \| \Sm \|_2^4 \Exp_{\Hm} [\| \Hm^{\dagger} \|_{\rm F}^4 + 2\| \Hm^{\dagger} \|_{(4)}^4] + 8 (\| \Sm \|_{\rm F}^4 + 2\| \Sm \|_{(4)}^4) \Exp_{\Hm} [\| \Hm^{\dagger} \|^4] \\
    &= 8 \| \Sm \|_2^4 \Exp[\| \Hm^{\dagger} \|_{\rm F}^4] + 16 \| \Sm \|_2^4 \Exp [\| \Hm^{\dagger} \|_{(4)}^4] + 8 (\| \Sm \|_{\rm F}^4 + 2\| \Sm \|_{(4)}^4) \Exp [\| \Hm^{\dagger} \|^4].
\end{align*}
\end{proof}

\begin{lemma}
    Consider a fixed matrix $\Sm$ and independent standard Gaussian matrices $\G$ and $\Hm$ of suitable dimensions, where $\Hm \in \R^{r \times k}$ with $\rho = k - r \geq 4$. Then,
    \[\Exp [\|\Sm\G\Hm^\dagger\|_2^4] \leq \frac{8r(r+2)}{(\rho-1)(\rho-3)}\| \Sm \|_2^4 +  \frac{8e^4k^2}{(\rho+1)^3(\rho-3)} (\| \Sm \|_{\rm F}^4 + 2\| \Sm \|_{(4)}^4).\]
    \label{lem:combined_op_four_prob}
\end{lemma}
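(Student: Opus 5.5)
The plan is to substitute the explicit moment formulas of \Cref{lem:exp_H_diff} into the bound of \Cref{lem:combined_op_four}. \Cref{lem:combined_op_four} holds with no restriction on $\rho$, so
\[
\Exp[\|\Sm\G\Hm^\dagger\|_2^4] \leq 8\|\Sm\|_2^4\bigl(\Exp\|\Hm^\dagger\|_F^4 + 2\Exp\|\Hm^\dagger\|_{(4)}^4\bigr) + 8(\|\Sm\|_F^4 + 2\|\Sm\|_{(4)}^4)\,\Exp\|\Hm^\dagger\|_2^4 .
\]
Since $\rho\ge 4$, the hypotheses of \Cref{lem:exp_H_diff} hold, and all three expectations are finite.

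\textbf{Second term.} Here I would use the third bound of \Cref{lem:exp_H_diff}, $\Exp\|\Hm^\dagger\|_2^4 \le e^4k^2/((\rho+1)^3(\rho-3))$. Multiplying by $8$ gives exactly the second summand in the claim.

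\textbf{First term.} I would insert the two exact formulas:
\[
\Exp\|\Hm^\dagger\|_F^4 + 2\Exp\|\Hm^\dagger\|_{(4)}^4 = \frac{r^2\rho - 2r(r-1) + 2r(k-1)}{\rho(\rho-1)(\rho-3)} .
\]
Substituting $k = r+\rho$, the numerator becomes
\[
r^2\rho - 2r^2 + 2r + 2r^2 + 2r\rho - 2r = r^2\rho + 2r\rho = r(r+2)\rho .
\]
The factor $\rho$ cancels, leaving $r(r+2)/((\rho-1)(\rho-3))$. Multiplying by $8$ yields the first summand.

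There is no real obstacle. The only point needing care is the numerator simplification via $k=r+\rho$, and confirming that \Cref{lem:combined_op_four} holds with $\G$ and $\Hm$ independent in the dimensions intended here. If the formulas from \cite[Lemma A.2]{PerssonRandomized2025} were used in a slightly different form, I would instead just bound the numerator above by $r(r+2)\rho$.
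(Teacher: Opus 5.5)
Your proposal is correct and follows the paper's proof: substitute the three moment bounds of \Cref{lem:exp_H_diff} into \Cref{lem:combined_op_four}. Your simplification $r^2\rho - 2r(r-1) + 2r(k-1) = r(r+2)\rho$, using $k = r+\rho$, is exactly the algebra the paper leaves implicit.
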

\begin{proof}
    The result follows from substituting the bounds for $\Exp [\|\Hm^\dagger\|_F^4]$, $\Exp [\|\Hm^\dagger\|_{(4)}^4]$, and $\Exp \|\Hm^\dagger\|_2^4]$ from \Cref{lem:exp_H_diff} into \Cref{lem:combined_op_four}.
\end{proof}

\subsection{Probabilistic Bounds for Low-Rank Approximation}
\label{ssec:app_prob_bd}
In this section, we provide a probabilistic bound for the squared approximation error of fun\Nystrom method. This is first aided by the structural bound, \Cref{lem:nystr_expand}, which decomposes this quantity as the sum of a deterministic and probabilistic quantity.
\begin{lemma}
    Consider the $r$-partitioning of $\A$ under \Cref{setting}. Then,
    \begin{equation*}
        \uinvnorm{f(\A)-f(\AnysR)}^2 \leq 2\uinvnorm{f(\Lm_2)}^2 + 2\uinvnorm{f((\Om_2 \Om_1^{\dagger} )^\top \Lm_2 (\Om_2 \Om_1^{\dagger} ))}^2,
    \end{equation*}
    where $\uinvnorm{\cdot}$ denotes any unitarily-invariant norm.
    \label{lem:nystr_expand}
\end{lemma}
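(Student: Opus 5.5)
The plan is to reduce the truncated fun\Nystrom error to the error of an auxiliary rank-$r$ \Nystrom approximation. We then bound the latter through a $2\times 2$ block structure in the eigenbasis of $\A$.

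\textbf{Step 1 (auxiliary approximation).} Set $\X := \Om\Om_1^{\dagger}\in\R^{n\times r}$. Since $\Om_1$ has full row rank, $\U^\top\X = \begin{bmatrix}\I\\ \F\end{bmatrix}$ with $\F := \Om_2\Om_1^{\dagger}$, and $\Range(\A\X)\subseteq\Range(\A\Om)$. Let $\Ah_{\X}$ be the \Nystrom approximation with test matrix $\X$. By \nysprop{item:nys_proj} and the projection ordering used for \nysprop{item:nys_mono}, $\Ah_{\X}\mleq\Anys\mleq\A$, and $\rank\Ah_{\X}\le r$.

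We then want to compare $\Ah_{\X}$ with $\AnysR$. Since both are dominated by $\A$, the operator monotonicity of $f$ together with \cite[Lemma 3.1]{PerssonKressner23} reduces the claim to showing
\[
\uinvnorm{f(\A)-f(\AnysR)}\le\uinvnorm{f(\A)-f(\Ah_{\X})}.
\]
I expect this comparison to be the main obstacle, because $\Ah_{\X}\mleq\AnysR$ need not hold in the Loewner order. My first attempt would exploit that $\AnysR$ is the best rank-$r$ approximation in the range $\Range(\A\Om)$ among matrices dominated by $\A$. Weyl's inequalities then give $\lambda_j(\Ah_{\X})\le\lambda_j(\AnysR)$. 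I would then try to turn this eigenvalue domination into a domination of the singular values of $f(\A)-f(\cdot)$, for instance via the variational characterisation of $\A-\AnysR$ as the minimal-trace-type residual. If this fails, the fallback is to work directly with a $2\times 2$ block bound for $\A-\AnysR$, as in the Tropp--Webber analysis of truncated \Nystrom.

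\textbf{Step 2 (from $f(\A)-f(\B)$ to $f(\A-\B)$).} For $f\in\mathscr{F}$, operator monotonicity implies operator concavity on $[0,\infty)$ with $f(0)=0$. Hence for $\mat 0\mleq\B\mleq\A$ the Ando/Bhatia subadditivity inequality gives $\uinvnorm{f(\A)-f(\B)}\le\uinvnorm{f(\A-\B)}$.

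\textbf{Step 3 (block structure).} Write $\Sm:=\Lm^{1/2}\U^\top\X=\begin{bmatrix}\Lm_1^{1/2}\\ \Lm_2^{1/2}\F\end{bmatrix}$. Then
\[
\U^\top(\A-\Ah_{\X})\U=\Lm^{1/2}(\I-\bs{\Pi}_{\Sm})\Lm^{1/2}=:\mat{M}\succeq\mat 0.
\]
For the diagonal blocks of $\mat{M}$, the standard argument of \cite{HalkoMartinssonTropp2011Survey} gives
\[
\I-\bs{\Pi}_{\Sm}\mleq\begin{bmatrix}\Lm_1^{-1/2}\F^\top\Lm_2\F\Lm_1^{-1/2} & *\\ * & \I\end{bmatrix}
\]
(with $\Lm_1$ invertible, or via a limiting argument otherwise). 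Consequently $\mat{M}_{11}\mleq\F^\top\Lm_2\F$ and $\mat{M}_{22}\mleq\Lm_2$.

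\textbf{Step 4 (combine).} For a PSD block matrix and concave $f\ge0$ with $f(0)=0$, the Bourin--Lee subadditivity gives $\uinvnorm{f(\mat{M})}\le\uinvnorm{f(\mat{M}_{11})}+\uinvnorm{f(\mat{M}_{22})}$. Operator monotonicity together with the monotonicity of unitarily invariant norms on PSD matrices then bounds the right-hand side by $\uinvnorm{f(\F^\top\Lm_2\F)}+\uinvnorm{f(\Lm_2)}$. Squaring and using $(a+b)^2\le 2a^2+2b^2$ yields the stated inequality of \Cref{lem:nystr_expand}.
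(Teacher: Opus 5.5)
\textbf{The gap is Step~1.} It is more than unfinished: the comparison you reduce to, $\uinvnorm{f(\A)-f(\AnysR)}\le\uinvnorm{f(\A)-f(\Ah_{\X})}$, is false in general. The eigenvalue domination $\lambda_j(\Ah_{\X})\le\lambda_j(\AnysR)$ is true. However, $\AnysR$ and $\Ah_{\X}$ are not Loewner-comparable, so it only controls the nuclear norm, where $\|f(\A)-f(\B)\|_*=\tr f(\A)-\tr f(\B)$ for $\B\mleq\A$. It does not control the individual singular values of $f(\A)-f(\cdot)$.

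A concrete counterexample: take $n=3$, $k=2$, $r=1$, $f(x)=x$ and $\A=\diag(1,\tfrac12,\epsilon)$. Let $\Om$ have columns $\A^{-1/2}(1,\tfrac{1}{2\sqrt2},\tfrac{1}{2\sqrt2})^\top$ and $\A^{-1/2}(0,-1,1)^\top$. Then $\Om_1=[1,\ 0]$ and $\X$ is the first column, so $\Ah_{\X}=\A^{1/2}\vec{p}\vec{p}^\top\A^{1/2}$ with $\vec{p}\propto(2\sqrt2,1,1)^\top$. By contrast, $\AnysR=\A^{1/2}\vec{v}\vec{v}^\top\A^{1/2}$, where $\vec{v}\approx(0.880,0.437,0.186)^\top$ is the top eigenvector of the compression of $\A$ to $\Range(\A^{1/2}\Om)$. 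Letting $\epsilon\downarrow0$ gives
\begin{itemize}
\item $\|\A-\AnysR\|_2\approx0.601>0.561\approx\|\A-\Ah_{\X}\|_2$, and
\item $\|\A-\AnysR\|_F^2\approx0.362>0.3225=\|\A-\Ah_{\X}\|_F^2$.
\end{itemize}
So your comparison fails in both the spectral and Frobenius norms for all small $\epsilon>0$. Because the failure is an open condition in $\Om$, it also occurs with positive probability for Gaussian $\Om$. The lemma itself is not violated here (its unsquared right-hand side is about $\tfrac12+\tfrac14$); only your intermediate step is.

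\textbf{The fallback does not close the gap either.} You do not carry it out, and a block bound in the style of Step~3 is unavailable precisely because $\A-\AnysR\not\preceq\A-\Ah_{\X}$. The obvious repair is to split
\[
f(\A)-f(\AnysR)=\bigl[f(\A)-f(\Anys)\bigr]+\bigl[f(\Anys)-f(\AnysR)\bigr],
\]
bound the second bracket by $\uinvnorm{f(\Lm_2)}$ via $\lambda_j(\Anys)\le\lambda_j(\A)$, and bound the first by your Steps~2--4. This yields
\[
2\uinvnorm{f(\Lm_2)}+\uinvnorm{f((\Om_2\Om_1^{\dagger})^\top\Lm_2(\Om_2\Om_1^{\dagger}))},
\]
which after squaring puts a constant $8$ rather than $2$ in front of $\uinvnorm{f(\Lm_2)}^2$.

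The paper does not derive the truncated bound itself. It imports the unsquared inequality for $\AnysR$ from \cite[Lemma 3.13]{PerssonKressner23} and squares it using $(\alpha+\beta)^2\le2\alpha^2+2\beta^2$.

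\textbf{What your argument does prove.} Steps~2--4 are sound. Combined with $\Ah_{\X}\mleq\Anys\mleq\A$, operator monotonicity, and \cite[Lemma 3.1]{PerssonKressner23}, they prove the stated inequality with $\Anys$ in place of $\AnysR$. That untruncated version is all that \Cref{thm:second_mom_bd_nys} ultimately uses. There the truncation is introduced only to be removed again via $\uinvnorm{f(\A)-f(\Anys)}\le\uinvnorm{f(\A)-f(\AnysR)}$.
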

\begin{proof}
    Follows directly from \cite[Lemma 3.13]{PerssonKressner23} and the fact that $(\alpha+\beta)^2 \leq 2\alpha^2 + 2\beta^2$ for all nonnegative scalars $\alpha, \beta \geq 0$.
\end{proof}

We would like to obtain bounds in expectation for the second term of the bound in \Cref{lem:nystr_expand}. We prove such a bound in \Cref{lem:second_mom_bd} following which, we present the main result of this section, \Cref{thm:second_mom_bd}. To this end, we first state an elementary property on operator monotone functions in \Cref{lem:opMonotoneProp}.

\begin{lemma}
    For operator monotone function $f:[0,\infty) \mapsto [0,\infty)$,
    \[f(tx) \leq \max\{t, 1\} f(x),\]
    for all $t, x \geq 0$.
    \label{lem:opMonotoneProp}
 \end{lemma}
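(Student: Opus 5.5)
The plan is to split into the cases $t \leq 1$ and $t > 1$, using only two scalar consequences of operator monotonicity: $f$ is nondecreasing on $[0,\infty)$, and $f$ is concave on $[0,\infty)$. We also use $f(0) \geq 0$, which holds because $f$ maps into $[0,\infty)$; for $f \in \mathscr{F}$ in fact $f(0)=0$.

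Monotonicity follows by applying the definition to $1 \times 1$ matrices: $0 \leq a \leq b$ gives $f(a) \leq f(b)$. Concavity is the nontrivial input. I would cite the classical fact that an operator monotone function on $[0,\infty)$ is operator concave, and hence concave as a scalar function \cite[Chapter 5]{bhatia}. If a more self-contained route is preferred, use the L\"owner integral representation
\[
f(x) = \alpha + \beta x + \int_{(0,\infty)} \frac{x(1+\lambda)}{x+\lambda}\, d\mu(\lambda),
\]
with $\alpha,\beta \geq 0$ and $\mu$ a positive measure. Each summand ($x \mapsto \alpha$, $x \mapsto \beta x$, and $x \mapsto x/(x+\lambda)$ for fixed $\lambda > 0$) is nonnegative, nondecreasing and concave, so $f$ is too. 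Equivalently, one can check the claimed inequality directly for each summand and integrate, since the inequality is preserved under positive combinations.

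\emph{Case $t \leq 1$.} Here $\max\{t,1\} = 1$. Since $tx \leq x$, monotonicity gives $f(tx) \leq f(x)$, as required.

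\emph{Case $t > 1$.} Here $\max\{t,1\} = t$. Write $x$ as a convex combination of $tx$ and $0$:
\[
x = \tfrac{1}{t}(tx) + \left(1 - \tfrac{1}{t}\right)\cdot 0 .
\]
Concavity then gives
\[
f(x) \geq \tfrac{1}{t} f(tx) + \left(1-\tfrac1t\right) f(0) \geq \tfrac{1}{t} f(tx),
\]
using $f(0) \geq 0$. Multiplying by $t$ yields $f(tx) \leq t f(x)$.

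The only step needing care is the justification of concavity, which is a standard but deep consequence of L\"owner's theory. Everything else is elementary.
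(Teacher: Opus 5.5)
Your proof is correct and is essentially the paper's: the paper's proof is only a citation of Lemmas~2.1--2.2 of Persson, Meyer, and Musco, and your case split (monotonicity for $t \le 1$; concavity plus $f(0) \ge 0$ giving $f(tx) \le t f(x)$ for $t > 1$) is the standard argument behind that citation. One minor point: Bhatia states the operator-concavity theorem for continuous $f$, which covers every function used in the paper, and in general concavity of $f$ on $(0,\infty)$ together with $f(0) \le f(0^+)$ already gives concavity on $[0,\infty)$.
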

 \begin{proof}
    This result follows directly as a consequence of \cite[Lemmas 2.1-2.2]{PerssonMeyerMusco25}.
 \end{proof}

\begin{lemma}
    Consider the $r$-partitioning of $\A$ under \Cref{setting} with oversampling $\rho = k - r \geq 4$. Then,
    \begin{align*}
        \Exp\left[ \uinvnorm{f((\Om_2 \Om_1^{\dagger} )^\top \Lm_2 (\Om_2 \Om_1^{\dagger}))}^2 \right] \leq &\Bigg[ 1 + \frac{8r(r+2)}{(\rho-1)(\rho-3)}\| \Lm_2 \|_2 \\ &+ \frac{8e^4k^2}{(\rho+1)^3(\rho-3)} (\| \Lm_2^{1 / 2} \|_{*}^2 + 2\| \Lm_2 \|_*) \Bigg] \uinvnorm{f(\Lm_2^{1 / 2})}^2.
    \end{align*}
    \label{lem:second_mom_bd}
\end{lemma}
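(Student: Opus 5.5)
The plan is to reduce the random matrix inside $f$ to a scalar multiple of $\Lm_2^{1/2}$ in the Loewner order, and then take expectations with \Cref{lem:combined_op_four_prob}. Write $\mat{M} := \Om_2\Om_1^\dagger$. The matrix $\mat{M}^\top\Lm_2\mat{M} = (\Lm_2^{1/2}\mat{M})^\top(\Lm_2^{1/2}\mat{M})$ has the same nonzero eigenvalues as $\Lm_2^{1/2}\mat{M}\mat{M}^\top\Lm_2^{1/2}$. Since $f(0)=0$, the matrices $f(\mat{M}^\top\Lm_2\mat{M})$ and $f(\Lm_2^{1/2}\mat{M}\mat{M}^\top\Lm_2^{1/2})$ also share their nonzero eigenvalues. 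A unitarily invariant norm of an SPSD matrix depends only on its nonzero eigenvalues, with zero-padding handling the mismatch in dimension. It therefore suffices to bound $\uinvnorm{f(\Lm_2^{1/2}\mat{M}\mat{M}^\top\Lm_2^{1/2})}$.

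The key choice is to split $\Lm_2^{1/2}$ as $\Lm_2^{1/4}\Lm_2^{1/4}$ rather than using $\Lm_2^{1/2}$ directly, because this is what produces the right powers of the spectrum. Set $t := \|\Lm_2^{1/4}\mat{M}\|_2^2$. Then
\[
\Lm_2^{1/2}\mat{M}\mat{M}^\top\Lm_2^{1/2} = \Lm_2^{1/4}\bigl(\Lm_2^{1/4}\mat{M}\mat{M}^\top\Lm_2^{1/4}\bigr)\Lm_2^{1/4} \mleq t\,\Lm_2^{1/2}.
\]
By operator monotonicity, $f(\Lm_2^{1/2}\mat{M}\mat{M}^\top\Lm_2^{1/2}) \mleq f(t\Lm_2^{1/2})$. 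For SPSD matrices, Loewner ordering gives ordering of the eigenvalues (Weyl), hence of any unitarily invariant norm. \Cref{lem:opMonotoneProp} applied eigenvalue-wise gives $f(t\lambda^{1/2}) \le \max\{t,1\}f(\lambda^{1/2})$. Combining these,
\[
\uinvnorm{f(\mat{M}^\top\Lm_2\mat{M})}^2 \le \max\{t,1\}^2\,\uinvnorm{f(\Lm_2^{1/2})}^2 \le (1+t^2)\,\uinvnorm{f(\Lm_2^{1/2})}^2 .
\]

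It remains to bound $\Exp[t^2] = \Exp\|\Lm_2^{1/4}\Om_2\Om_1^\dagger\|_2^4$. By rotational invariance of the Gaussian distribution and orthogonality of $[\U_1\ \U_2]$, the blocks $\Om_1 = \U_1^\top\Om$ and $\Om_2 = \U_2^\top\Om$ are independent standard Gaussian matrices, with $\Om_1 \in \R^{r\times k}$ and $\rho = k-r \ge 4$. Apply \Cref{lem:combined_op_four_prob} with $\Sm = \Lm_2^{1/4}$, $\G = \Om_2$, $\Hm = \Om_1$, using
\[
\|\Sm\|_2^4 = \|\Lm_2\|_2, \qquad \|\Sm\|_F^4 = \|\Lm_2^{1/2}\|_*^2, \qquad \|\Sm\|_{(4)}^4 = \|\Lm_2\|_* .
\]
This reproduces exactly the bracketed constant in the statement. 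Taking expectations in the previous display then finishes the proof; $\uinvnorm{f(\Lm_2^{1/2})}$ is deterministic and factors out.

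The step I expect to need the most care is the first one: passing between $\mat{M}^\top\Lm_2\mat{M}$ and $\Lm_2^{1/2}\mat{M}\mat{M}^\top\Lm_2^{1/2}$, which have different sizes, and then from the Loewner bound to the norm bound. Both rely on $f(0)=0$, on $f\ge 0$, and on unitarily invariant norms of PSD matrices depending only on their sorted nonzero eigenvalues. These should be stated explicitly in the write-up. Recognizing the $\Lm_2^{1/4}$ splitting is the other non-obvious ingredient.
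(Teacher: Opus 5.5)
Your proof is correct and follows the paper's argument step for step. The steps are: split $\Lm_2^{1/2}=\Lm_2^{1/4}\Lm_2^{1/4}$ to obtain the Loewner bound by $\|\Lm_2^{1/4}\Om_2\Om_1^\dagger\|_2^2\,\Lm_2^{1/2}$; apply operator monotonicity, the scaling property $f(tx)\le\max\{t,1\}f(x)$ and $\max\{t,1\}^2\le 1+t^2$; then finish with the fourth-moment bound for $\Sm=\Lm_2^{1/4}$. Your explicit passage from the $r\times r$ matrix $\mat{M}^\top\Lm_2\mat{M}$ to the $(n-r)\times(n-r)$ matrix $\Lm_2^{1/2}\mat{M}\mat{M}^\top\Lm_2^{1/2}$ (same nonzero eigenvalues, plus $f(0)=0$) is the correct reading of the paper's first step, which writes the Loewner comparison directly between matrices of different sizes.
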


\begin{proof}
    Let $\Z := \Om_2 \Om_1^{\dagger} \in \R^{(n-r) \times k}$. Note that for any SPSD matrices $\A$ and $\D$, $\D^{1 / 2}\A\D^{1 / 2} \mleq \|\A\|_2 \D$.
    Since $f$ is operator monotone, 
    \begin{align*}
        f(\Z^\top \Lm_2 \Z) = f(\Z^\top \Lm_2^{1 / 4}  \Lm_2^{1 / 2}  \Lm_2^{1 / 4}\Z) 
        \mleq\, f(\|\Lm_2^{1 / 4}\Z\|_2^2 \Lm_2^{1 / 2}).  
    \end{align*}
    Since $\Lm_2$ is diagonal with nonnegative entries, we can apply \Cref{lem:opMonotoneProp} to obtain,
    \[f(\Z^\top \Lm_2 \Z) \mleq f(\|\Lm_2^{1 / 4}\Z\|_2^2 \Lm_2^{1 / 2}) \mleq \max\{\|\Lm_2^{1 / 4}\Z\|_2^2, 1\} f(\Lm_2^{1 / 2}).\]
    Therefore, by \cite[Lemma 3.1]{PerssonKressner23},
    \[\uinvnorm{f(\Z^\top \Lm_2 \Z)} \leq \max\{\|\Lm_2^{1 / 4}\Z\|_2^2, 1\}\uinvnorm{f(\Lm_2^{1 / 2})}.\]
    Squaring both sides, bounding the maximum by the sum, and taking the expectation, we obtain
    \[\Exp\left[ \uinvnorm{f(\Z^\top \Lm_2 \Z)}^2 \right] \leq \left( \Exp\left[ \|\Lm_2^{1 / 4}\Z\|_2^4 \right] + 1 \right)\uinvnorm{f(\Lm_2^{1 / 2})}^2.\]
    Applying \Cref{lem:combined_op_four_prob}, 
    \[\Exp \left[ \|\Lm_2^{1 / 4}\Om_2 \Om_1^{\dagger}\|_2^4 \right] \leq \frac{8r(r+2)}{(\rho-1)(\rho-3)}\| \Lm_2 \|_2 +  \frac{8e^4k^2}{(\rho+1)^3(\rho-3)} (\| \Lm_2^{1 / 2} \|_{*}^2 + 2\| \Lm_2 \|_*),\]
    and we obtain the desired result.
\end{proof}

\begin{theorem}
    Consider the $r$-partitioning of $\A$ under \Cref{setting} with oversampling $\rho = k - r \geq 4$. Then,
    \begin{align*}
        \Exp\left[ \uinvnorm{f(\A)-f(\AnysR)}^2 \right] \leq 2\uinvnorm{f(\Lm_2)}^2 + &\Bigg[ 2 + \frac{16r(r+2)}{(\rho-1)(\rho-3)} \|\Lm_2\|_2 \\&\quad +   \frac{16e^4k^2}{(\rho+1)^3(\rho-3)} (\|\Lm_2^{1/2}\|_*^2 + 2\|\Lm_2\|_*) \Bigg] \uinvnorm{f(\Lm_2^{1 / 2})}^2.
    \end{align*}
    \label{thm:second_mom_bd}
\end{theorem}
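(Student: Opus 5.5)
This is a direct combination of \Cref{lem:nystr_expand} and \Cref{lem:second_mom_bd}, so the proof is short. Work in \Cref{setting} with an $r$-partition of $\A$ and $\rho = k-r \geq 4$. Write $\Z := \Om_2\Om_1^\dagger$.

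First, apply \Cref{lem:nystr_expand} pointwise, i.e., for every realization of $\Om$ with $\rank(\Om_1)=r$, which holds almost surely. This gives the deterministic inequality
\[
\uinvnorm{f(\A)-f(\AnysR)}^2 \leq 2\uinvnorm{f(\Lm_2)}^2 + 2\uinvnorm{f(\Z^\top \Lm_2 \Z)}^2 .
\]
The first term on the right is nonrandom.

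Next, take expectations of both sides and use linearity. The expectation of the second term is controlled by \Cref{lem:second_mom_bd}. Multiplying that bound by $2$ turns the bracket
\[
1 + \frac{8r(r+2)}{(\rho-1)(\rho-3)}\|\Lm_2\|_2 + \frac{8e^4k^2}{(\rho+1)^3(\rho-3)}\bigl(\|\Lm_2^{1/2}\|_*^2 + 2\|\Lm_2\|_*\bigr)
\]
into exactly the bracket in the statement, with constants $2$, $16r(r+2)/((\rho-1)(\rho-3))$ and $16e^4k^2/((\rho+1)^3(\rho-3))$. Adding the deterministic term $2\uinvnorm{f(\Lm_2)}^2$ then gives the claimed inequality.

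The only point needing care is measure-theoretic. The exceptional event $\{\rank(\Om_1)<r\}$ has probability zero, so it does not affect the expectation. Both sides are also nonnegative, so the expectations are well defined even if they are infinite, and finiteness of the right side follows from $\rho\geq 4$ through \Cref{lem:exp_H_diff}.

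There is no substantive obstacle. All the probabilistic work, namely the fourth-moment bound on $\|\Lm_2^{1/4}\Z\|_2$ and the operator-monotone scaling $f(tx)\le\max\{t,1\}f(x)$, was already done in \Cref{lem:second_mom_bd}.
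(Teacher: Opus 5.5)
Your proposal is correct and follows the paper's own proof: take expectations in the pointwise bound of \Cref{lem:nystr_expand} and multiply the bound of \Cref{lem:second_mom_bd} by $2$, which gives exactly the stated bracket. One small point: the right-hand side of the theorem is deterministic, so the finiteness that actually matters is that of $\Exp\uinvnorm{f(\Z^\top\Lm_2\Z)}^2$, and \Cref{lem:second_mom_bd} already supplies it.
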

\begin{proof}
    This result follows from combining \Cref{lem:second_mom_bd,lem:nystr_expand}.
\end{proof}
\begin{thmproof}{thm:second_mom_bd_nys}
    This result follows from substituting $r = k - 4$ in \Cref{thm:second_mom_bd} and combining with \Cref{eq:oversampling}. 
\end{thmproof}

\end{document}